\let\realverbatim=\verbatim
\let\realendverbatim=\endverbatim
\renewcommand\verbatim{\par\addvspace{6pt plus 2pt minus 1pt}\realverbatim}
\renewcommand\endverbatim{\realendverbatim\addvspace{6pt plus 2pt minus 1pt}}
       \let\le=\leqslant  \let\leq=\leqslant
       \let\ge=\geqslant  \let\geq=\geqslant
\newcommand{\f}{\frac}
\newcommand{\ds}{\displaystyle}
\newcommand{\va}{\varsigma}
\newsavebox{\astrutbox}
\sbox{\astrutbox}{\rule[-5pt]{0pt}{20pt}}
\newtheorem{theorem}{Theorem}[section]
\newdefinition{definition}[theorem]{Definition}
\newtheorem{lemma}[theorem]{Lemma}
\newtheorem{proposition}[theorem]{Proposition}
\newtheorem{remark}[theorem]{Remark}
\title[European Journal of Applied Mathematics]{Dynamics of a delayed population patch model with the dispersion matrix incorporating population loss}
\author[D. Huang and S. Chen]{%
  D\ls A\ls N\ns H\ls U\ls A\ls N\ls G$\,^1$\ns
\and
  S\ls H\ls A\ls N\ls S\ls H\ls A\ls N\ns C\ls H\ls E\ls N$\,^2$
}
\affiliation{%
  $^1\,$School of Mathematics, Harbin Institute of Technology, Harbin, Heilongjiang, 150001, P.R.China\\
  $^2\,$Department of Mathematics, Harbin Institute of Technology, Weihai, Shandong, 264209, P.R.China\\
   email\textup{\nocorr: \texttt{chenss@hit.edu.cn}}\\
}
\date{18 August 2021}
\begin{document}

\label{firstpage}
\maketitle

\begin{abstract}
In this paper, we consider a general single population model with delay and patch structure, which could model the population loss during the dispersal. It is shown that the model
admits a unique positive equilibrium when the dispersal rate is smaller than a critical value. The stability of the positive equilibrium and associated Hopf bifurcation are investigated when the dispersal rate is small or near the critical value. Moreover, we show the effect of network topology on Hopf bifurcation values for a delayed logistic population model.
\end{abstract}

\begin{keywords}
Hopf bifurcation; Patch structure; Delay; Population loss.
\end{keywords}

\begin{subjclass}[2010]
92D25, 34K18, 34K13, 37N25
\end{subjclass}

\section{Introduction}

The population dynamics can be investigated via reaction-diffusion systems or discrete patch models \cite{Akira2001,CantrellCosner}. For some biological species, time delays such as the maturation time and hunting time may have important
effect on the population dynamics, and it should be included in the modeling process. Therefore, various reaction-diffusion models with time
delay and delayed patch models
have been proposed to understand the interaction between biological species \cite{Magal2018,Wu1996Theory}.

For reaction-diffusion models with time
delay, time delay induced Hopf bifurcations and double Hopf bifurcations were studied extensively.
For example, one can refer to \cite{Faria2000,Gourley2002,Hadeler2007,Morita1984,ShiRuan2015,Yoshida1982The} and references therein for results on Hopf bifurcations of reaction-diffusion models with time
delay under the homogeneous Neumann boundary conditions, and
see \cite{DuNiu2019,DuNiu2020} for results on double Hopf bifurcations.
For the case of the homogeneous Dirichlet boundary conditions, delay induce Hopf bifurcations were studied in \cite{Busenberg1996,ChenShi2012JDE,ChenYu2016JDDE,ChenYu2016JDE,Guo2016,Hu2011,SuWeiShi2009,SuWeiShi2012,Yan2010} and references therein, and the bifurcating stable periodic solutions through Hopf bifurcation are usually spatially heterogeneous. Moreover, spatial heterogeneity was recently taken into consideration for reaction-diffusion models with time
delay, and the associated Hopf bifurcations
 were investigated in \cite{Chen2018Hopf,ChenWeiZhang2020,HuangChen2021,JinYuan2021,Li-Dai,ShiShi2019}.

There are also extensive results on bifurcations for delayed patch models. For the spatially homogeneous environments,
one can refer to \cite{ChangJin2020,DuanJin2020,Fernandes2019} and references therein for dispersal induced Turing bifurcations, and delay induced Hopf bifurcations were also studied extensively, see e.g. \cite{ChangSun2019,Madras1996,Petit2016,SoWuZou2001,Tian2019}.
Considering the spatial heterogeneity, Liao and Lou \cite{LiaoLou2014} investigated the following two-patch model, which models the growth of a single species:
\begin{equation}\label{patc}
\begin{cases}
\displaystyle \frac{d u_{1}}{d t}=d\left(\alpha_{11}u_{1}+\alpha_{12} u_{2}\right)+\mu u_{1}\left[m_{1}-u_{1}(t-r)\right], & t>0, \\
\displaystyle \frac{d u_{2}}{d t}=d\left(\alpha_{21}u_{1}+\alpha_{22} u_{2}\right)+\mu u_{2}\left[m_{2}-u_{2}(t-r)\right], & t>0,
\end{cases}
\end{equation}
where $u_j$ denotes
the population density in patch $j$ and time $t$, $d$ is the dispersal rate, $\mu$ is a scalar factor, $r$ represents the maturation time, and $m_j$ is the intrinsic growth
rate in patch $j$, which depends on patch $j$ and represents the spatial heterogeneity.
Dispersion matrix $A:=(\alpha_{jk})_{2\times 2}$ in
\cite{LiaoLou2014} is chosen to be
\begin{equation*}
(a) \;\alpha_{11}=\alpha_{22}=-1, \alpha_{12}=\alpha_{21}=1, \;\;\text{or}\;\;(b) \;\alpha_{11}=\alpha_{22}=-2, \alpha_{12}=\alpha_{21}=1,
\end{equation*}
where $\alpha_{jk}(j\ne k)\ge0$
denotes the rate of population movement from patch $k$ to patch $j$, and $\alpha_{jj}<0$ denotes the
rate of population leaving patch $j$.
Model \eqref{patc} with dispersion matrix $(a)$ (respectively, $(b)$) can be regarded as a discrete form of Hutchinson's model under the homogeneous Neumann (respectively, Dirichlet) boundary condition.
For case $(a)$, the dispersion matrix satisfies $-\alpha_{jj}=\sum_{k \neq j} \alpha_{kj}$ for $j=1,2$, which implies that the two patch habitat is closed, and there is no population loss during the dispersal. For case $(b)$, the dispersion matrix satisfies $-\alpha_{jj}>\sum_{k \neq j} \alpha_{kj}$, and the species has population loss at the boundary, see Fig. \ref{fig1}.
\begin{figure}[htbp]
\centering
\includegraphics[width=0.35\textwidth,trim=0 175 0 150,clip]{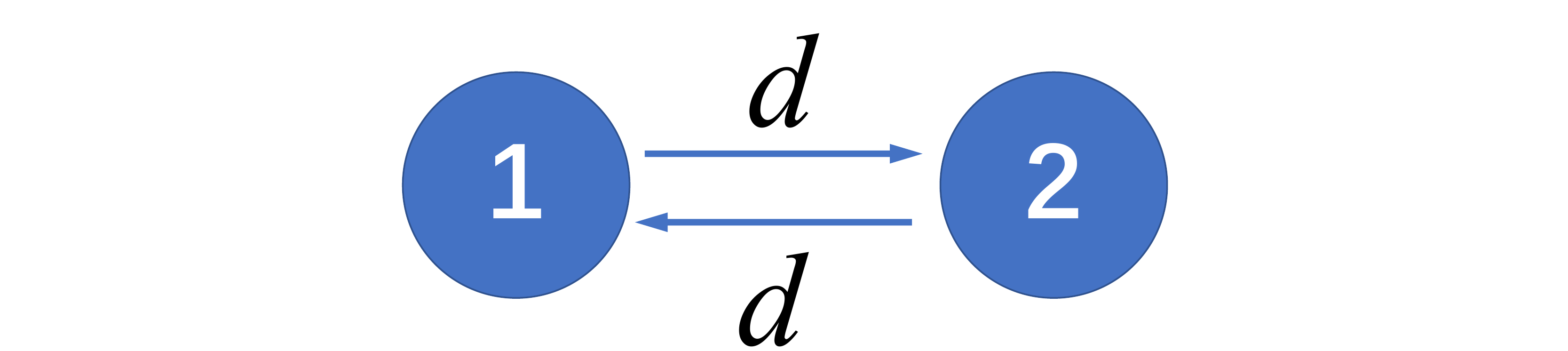}
\includegraphics[width=0.35\textwidth,trim=0 175 0 150,clip]{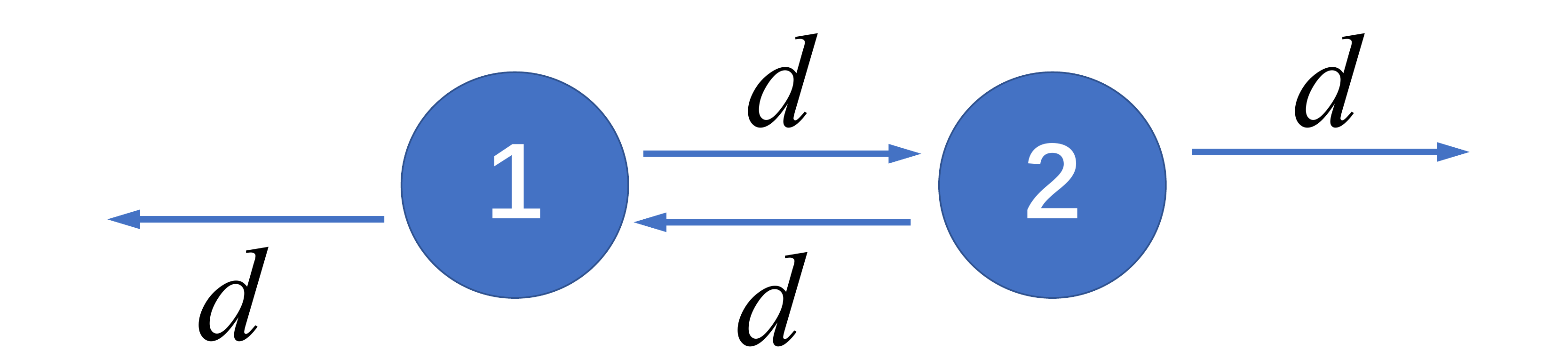}
\caption{The connection between two patches. (Left) Dispersion matrix $(a)$; (Right) Dispersion matrix $(b)$.
  \label{fig1}}
\end{figure}

A natural question is whether Hopf bifurcations can occur for model \eqref{patc} when the number of patches is finite but arbitrary, and in such a case, the connection among patches may also be complex. One can also refer to \cite{XiaoTang2011,ZhuYanJin} for detail discussions on complex connection among patches. In this paper, we aim to answer this question, and consider the following patch model:
\begin{equation}\label{M1}
\begin{cases}
\displaystyle \frac{d u_{j}}{d t}=d \sum_{k=1}^{n} \alpha_{jk} u_{k}+ u_j f_j \left(u_j, u_{j}(t-\tau)\right), & t>0,~~ j=1, \cdots, n, \\
\displaystyle \bm{u}(t)=\bm\psi(t) \geq \bm 0, & t \in[-\tau, 0].
\end{cases}
\end{equation}
Here $\bm u=\left(u_{1}, \cdots, u_{n}\right)^{T}$, where  $u_j$ stands for the number of individuals in patch $j$, $n \ge 2$ is the number of patches; $f_j(\cdot,\cdot)$ is the growth rate per capita; $d >0$ is the dispersal rate of the population; and time delay $\tau \ge 0$ represents the maturation time of the population.
Moreover, $A:=( \alpha_{jk})_{n \times n}$ is the dispersion matrix, where
$\alpha_{jk}(j \neq k)\ge 0$ denotes the rate of population movement from patch $k$ to patch $j$, and $\alpha_{jj}\le 0$ denotes the rate of population leaving patch $j$.

We remark that if there is no population loss during the dispersal ($-\alpha_{jj}=\sum_{k \neq j} \alpha_{kj}$ for $j=1,\dots,n$), Hopf bifurcation can occur when the dispersal rate is small, large or near some critical value, see \cite{ChenHopf,HuangChenS}.
Therefore, in this paper, we consider model \eqref{M1} when the species has population loss during the dispersal. That is, the following assumption holds:
\begin{enumerate}
\item [$\bf(H0)$] $A:=( \alpha_{jk})_{n \times n}$ is irreducible and essentially nonnegative; and $-\alpha_{jj}\ge\sum_{k \neq j} \alpha_{kj}$ for all $ j=1,\cdots,n$, and $-\alpha_{j j}>\sum_{k \neq j} \alpha_{kj}$ for some $j$.
\end{enumerate}
Here we remark that
real matrices with nonnegative off-diagonal elements are referred as essentially nonnegative matrices.
Throughout the paper, we also impose the following assumption:
\begin{enumerate}
\item [$\bf(H1)$] For $j=1,2,\cdots,n$, $f_j(x,y)\in C^4(\mathbb{R}\times \mathbb{R},\mathbb{R})$, $f_j(0,0)=m_j>0$ and $g_j'(x)<0$ for  $x>0$ with $g_j(x)=f_j(x,x)$.
\end{enumerate}
Here $m_j$ represents the intrinsic growth rate in patch $j$. The smooth condition that $f_j(x,y)\in C^4(\mathbb{R}\times \mathbb{R},\mathbb{R})$ is used to determine the direction of the Hopf bifurcation and the stability of the bifurcating periodic solutions, and we do not include this part in the paper for simplicity.
We remark that for the case of population loss, we need to modify the arguments in \cite{ChenHopf,HuangChenS} to derive \textit{a priori} estimates for eigenvalue problem. Moreover, we show the effect of dispersal rate $d$ and network topology on the Hopf bifurcation values for the logistic population model.




For simplicity, we give some notations here. For a matrix $D$, we denote the spectral bound of $D$ by
\begin{equation*}
s(D):=\max\{\mathcal {R}e \mu:\mu\text{ is an eigenvalue of } D\}.
\end{equation*}
For $\mu\in\mathbb{C}$, we denote the real and imaginary parts by $\mathcal {R}e\mu$ and $\mathcal {I}m \mu$, respectively.
For a space $Z$, we denote
complexification of $Z$ to be $Z_\mathbb{C}:= Z \oplus {\rm i}Z = \{x_1+{\rm i}x_2 | x_1, x_2 \in Z\}$. For a linear operator $T$, we define the domain and the kernel of $T$ by $\mathscr{D}(T)$ and $\mathscr{N}(T)$, respectively. For $\mathbb{C}^n$, we choose the inner product $\langle\bm{u}, \bm{v}\rangle=\sum_{j=1}^{n} \overline {u}_{j} v_{j}$ for $\bm{u}, \bm{v} \in \mathbb{C}^{n}$, and define the norm
\begin{equation*}
\|\bm u\|_{2}=\left(\sum_{j=1}^{n}\left|u_{j}\right|^{2}\right)^{1 / 2}.
\end{equation*}
For $\bm u=(u_1,\cdots, u_n)^T\in\mathbb R^n$, we  write $\bm u\gg \bm 0$ if $u_j>0$ for all $j=1,\cdots,n$.

The rest of the paper is organized as follows. In Section 2, we give some preliminaries, and show that model \eqref{M1}
admits a unique positive equilibrium  $\bm u_d$ for $d\in (0,d_*)$. In Section 3, we show the existence of the Hopf bifurcation when
$0<d\ll 1$ and $0<d_*-d\ll 1$, respectively.
In Section 4, we apply the obtained theoretical results to a logistic population model, discuss the effect of network topology on Hopf bifurcation values, and give some numerical simulations.

\section{Some preliminaries}
In this section, we cite some results on the properties of the spectrum bound $s\left(dA+\text{diag}(m_j)\right)$, and the global dynamics of model \eqref{M1} for $\tau=0$.
The first one is from \cite{chen2019spectral}.
\begin{lemma}\label{lem}
Assume that $\bf(H0)$ holds, and denote $s(d):=s\left(dA+\text{diag}(m_j)\right)$. Then $s(d)$ is strictly decreasing in $d\in(0,\infty)$, $\lim_{d\to0}s(d)=\max_{1\le j\le n}\{m_j\}$, and
$\lim_{d\to\infty} s(d)=-\infty$. Moreover, there exists $d_*>0$ such that $s(d_*)=0$, $s(d)>0$ for $d\in(0,d_*)$ and  $s(d)<0$ for $d>d_*$.
\end{lemma}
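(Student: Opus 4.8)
\medskip
\noindent\textbf{Proof plan.}
The plan is to combine Perron--Frobenius theory for irreducible essentially nonnegative (Metzler) matrices with the spectral monotonicity estimate behind \cite{chen2019spectral}, and to conclude with a continuity/intermediate-value argument. First, for $d>0$ the matrix $B(d):=dA+\text{diag}(m_j)$ is irreducible and essentially nonnegative, since multiplying $A$ by $d>0$ and adding a diagonal matrix leaves its off-diagonal zero pattern and the signs of its off-diagonal entries unchanged. Hence $s(d)=s(B(d))$ is an algebraically simple eigenvalue of $B(d)$, it is the only eigenvalue admitting a nonnegative eigenvector, and there are $\bm\phi(d)\gg\bm0$ and $\bm\psi(d)\gg\bm0$ with $B(d)\bm\phi(d)=s(d)\bm\phi(d)$ and $\bm\psi(d)^{T}B(d)=s(d)\bm\psi(d)^{T}$; by simplicity, $s(\cdot)$ and the normalised eigenvectors depend real-analytically on $d\in(0,\infty)$.

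For the strict monotonicity I would differentiate $B(d)\bm\phi(d)=s(d)\bm\phi(d)$ in $d$, pair with $\bm\psi(d)$ and normalise $\bm\psi(d)^{T}\bm\phi(d)=1$ to get the first-order perturbation identity $s'(d)=\bm\psi(d)^{T}A\,\bm\phi(d)$; the whole point is then to prove $\bm\psi(d)^{T}A\,\bm\phi(d)<0$. This is where $\bf(H0)$ enters: the column sums of $A$ satisfy $\alpha_{jj}+\sum_{k\ne j}\alpha_{kj}\le0$ for all $j$ (that is, $\bm1^{T}A\le\bm0^{T}$), with strict inequality for some $j$, so splitting $A=\widetilde A-R$ with $R:=\text{diag}(r_j)\ge0$, $r_j:=-\sum_k\alpha_{kj}$, the loss part, and $\widetilde A:=A+R$ the conservative part ($\bm1^{T}\widetilde A=\bm0^{T}$), one gets $\bm\psi^{T}A\bm\phi=\bm\psi^{T}\widetilde A\bm\phi-\sum_j r_j\psi_j\phi_j$ with $\sum_j r_j\psi_j\phi_j>0$; it remains to control the conservative term. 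Alternatively, without differentiating: since $B(d)=d\widetilde A+(\text{diag}(m_j)-dR)$, for $0<d'<d$ we have $B(d)\le d\widetilde A+(\text{diag}(m_j)-d'R)$ entrywise with a strict inequality somewhere, so strict monotonicity of the Perron root under an entrywise increase reduces the claim to the ``reduction principle'' that $t\mapsto s(t\widetilde A+E)$ is nonincreasing for a conservative essentially nonnegative $\widetilde A$ and a fixed diagonal matrix $E$. This conservative-case inequality is the substantive input and is precisely what \cite{chen2019spectral} supplies; I expect it to be the main obstacle, because $\bm\psi(d)\ne\bm1$ in general, so the column-sum sign of $A$ cannot be inserted directly into $\bm\psi(d)^{T}A\bm\phi(d)$.

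Finally, for the endpoints and the conclusion: by continuity of the spectral bound, $\lim_{d\to0^{+}}s(d)=s(\text{diag}(m_j))=\max_{1\le j\le n}m_j$. For $d\to\infty$, use the homogeneity $s(cB)=c\,s(B)$ for $c>0$ to write $s(d)=d\,s\bigl(A+\tfrac1d\,\text{diag}(m_j)\bigr)$; applying $\bm1^{T}$ to $A\bm\phi_A=s(A)\bm\phi_A$, where $\bm\phi_A\gg\bm0$ is the Perron eigenvector of the irreducible Metzler matrix $A$, gives $s(A)\,\bm1^{T}\bm\phi_A=\sum_j\bigl(\sum_k\alpha_{kj}\bigr)\phi_{A,j}<0$, hence $s(A)<0$; by continuity $s\bigl(A+\tfrac1d\,\text{diag}(m_j)\bigr)\to s(A)<0$ as $d\to\infty$, so $s(d)\to-\infty$. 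Since $s$ is continuous and strictly decreasing on $(0,\infty)$ with $\lim_{d\to0^{+}}s(d)=\max_j m_j>0$ (by $\bf(H1)$) and $\lim_{d\to\infty}s(d)=-\infty$, the intermediate value theorem gives a unique $d_{*}>0$ with $s(d_{*})=0$, and then $s(d)>0$ for $d\in(0,d_{*})$ and $s(d)<0$ for $d>d_{*}$.
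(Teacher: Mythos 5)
The paper does not actually prove this lemma --- it is imported verbatim from \cite{chen2019spectral} --- so there is no in-paper argument to compare against; your reconstruction is correct as far as it goes and supplies the details the paper omits. The Perron--Frobenius setup, the perturbation formula $s'(d)=\bm\psi(d)^{T}A\,\bm\phi(d)$ under the normalisation $\bm\psi^{T}\bm\phi=1$, the splitting $A=\widetilde A-R$ with $\bm{1}^{T}\widetilde A=\bm{0}^{T}$ and $R\ge 0$ diagonal and nonzero, the entrywise-comparison reduction of strict monotonicity to the conservative case, the limit $d\to 0$ by continuity, the limit $d\to\infty$ via homogeneity together with $s(A)<0$ (which you derive correctly by pairing $A\bm\phi_A=s(A)\bm\phi_A$ with $\bm{1}^{T}$ and using the column-sum condition in $\bf(H0)$), and the final intermediate-value argument are all sound. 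The one step you leave unproved --- that $t\mapsto s(t\widetilde A+E)$ is nonincreasing for a conservative irreducible essentially nonnegative $\widetilde A$ and diagonal $E$, equivalently $\bm\psi^{T}\widetilde A\,\bm\phi\le 0$ --- is indeed the substantive ingredient (Karlin's reduction principle), but you identify it precisely and attribute it to the same reference the paper itself cites for the entire lemma; in fact the monotonicity theorem of \cite{chen2019spectral} applies directly to an irreducible essentially nonnegative matrix with nonpositive column sums, so your decomposition is simply one legitimate way of invoking it. In short: correct modulo the cited spectral-monotonicity result, which is exactly the level of rigor the paper adopts here.
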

This, combined with \cite{chen2019spectral,li2010global,Lu1993,Zhao2017}, implies that:
\begin{lemma}\label{GAS}
Assume that $\bf(H0)$-$\bf(H1)$ hold, and $\tau=0$.
Then the trivial equilibrium $\bm 0= (0,\cdots,0)^T$ of \eqref{M1} is globally asymptotically stable for $d\ge d_*$, and for $d<d_*$, system \eqref{M1} admits a unique positive equilibrium
$\bm u^d=(u^d_{1},\cdots, u^d_{n})^T\gg\bm 0$, which is globally asymptotically stable.
\end{lemma}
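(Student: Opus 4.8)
The plan is to regard the $\tau=0$ system as a cooperative, subhomogeneous ODE and invoke the classical theory of monotone dynamical systems. Writing $g_j(x)=f_j(x,x)$, system \eqref{M1} with $\tau=0$ becomes $\dot{\bm u}=\bm F(\bm u):=dA\bm u+\big(u_1g_1(u_1),\dots,u_ng_n(u_n)\big)^{T}$. First I would settle well-posedness on the nonnegative cone $\mathbb{R}^n_{+}$: when $u_j=0$ the $j$-th component of $\bm F$ equals $d\sum_{k\ne j}\alpha_{jk}u_k\ge 0$, so $\mathbb{R}^n_{+}$ is positively invariant; the Jacobian $D\bm F(\bm u)=dA+\mathrm{diag}\big(g_j(u_j)+u_jg_j'(u_j)\big)$ has nonnegative off-diagonal entries $d\alpha_{jk}$ and is irreducible because $A$ is (by $\bf(H0)$), so the induced semiflow is monotone on $\mathbb{R}^n_{+}$ and strongly monotone on its interior. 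Ultimate boundedness of orbits follows by comparison with a large constant super-solution, using the decreasing behaviour of $g_j$ and the loss inequality $-\alpha_{jj}\ge\sum_{k\ne j}\alpha_{kj}$ of $\bf(H0)$.

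Next I would record the structural facts that force the dichotomy. Since $f_j(0,0)=m_j$, the linearization of $\bm F$ at $\bm 0$ is $B:=dA+\mathrm{diag}(m_j)$, so $s\big(D\bm F(\bm 0)\big)=s(d)$; moreover $g_j$ decreasing gives $u_jg_j(u_j)\le m_ju_j$ on $\mathbb{R}^n_{+}$, hence $\bm F(\bm u)\le B\bm u$ there. Finally, since $g_j$ is \emph{strictly} decreasing on $(0,\infty)$ by $\bf(H1)$, the nonlinearity is strictly subhomogeneous: for $0<\theta<1$ and $u_j>0$ we have $\theta u_jg_j(\theta u_j)>\theta u_jg_j(u_j)$, so $\bm F(\theta\bm u)\ge\theta\bm F(\bm u)$ on $\mathbb{R}^n_{+}$, with $\bm F(\theta\bm u)\gg\theta\bm F(\bm u)$ whenever $\bm u\gg\bm 0$.

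I would then split on the sign of $s(d)$, which is controlled by Lemma \ref{lem}. If $d>d_*$, then $s(B)=s(d)<0$ and, $B$ being essentially nonnegative, $\bm 0\le\bm u(t)\le e^{tB}\bm u(0)\to\bm 0$ as $t\to\infty$, so $\bm 0$ is globally asymptotically stable. If $d<d_*$, then $s(B)>0$, the origin is linearly unstable, and the standard result for dissipative, cooperative, irreducible, strictly subhomogeneous systems (see \cite{li2010global,Lu1993,Zhao2017}, and also \cite{chen2019spectral}) gives a unique positive equilibrium $\bm u^d\gg\bm 0$ attracting every orbit in $\mathbb{R}^n_{+}\setminus\{\bm 0\}$; subhomogeneity plus irreducibility then force $s\big(D\bm F(\bm u^d)\big)<0$, so $\bm u^d$ is globally asymptotically stable. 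The borderline $d=d_*$ ($s(B)=0$) is treated by the same monotone/subhomogeneous argument, which excludes any positive equilibrium and forces every orbit to decay to $\bm 0$.

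I expect the borderline case $d=d_*$ to be the main obstacle: with $s(B)=0$ the linearization is inconclusive, so one must use the strict subhomogeneity directly (for instance via a part-metric or sub/super-solution argument) both to rule out a persistent orbit and to prove global convergence to $\bm 0$. A secondary technical point is constructing the compact absorbing set, which requires the decreasing behaviour of $g_j$ (and, if $\bf(H1)$ alone is too weak, a mild extra growth condition) together with the population-loss inequality in $\bf(H0)$.
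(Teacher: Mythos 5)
Your proposal is correct and is essentially the argument the paper intends: the paper gives no proof of Lemma \ref{GAS}, deriving it from Lemma \ref{lem} together with the cited references (\cite{chen2019spectral,li2010global,Lu1993,Zhao2017}), which contain precisely the cooperative, irreducible, strictly subhomogeneous threshold-dynamics machinery you outline, with the sign of $s(dA+\operatorname{diag}(m_j))$ as the dichotomy parameter. Your flagged caveat about dissipativity (that $\bf(H1)$ alone does not force $g_j$ to change sign, which is also needed for the paper's claim that $f_j(x,x)=0$ has a positive root) is a fair observation about an implicit assumption in the paper rather than a defect of your argument.
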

It follows directly from the Perron-Frobenius theorem that $s\left(d_*A+\text{diag}(m_j)\right)(=0)$ is a simple eigenvalue of $d_*A+\text{diag}(m_j)$ with corresponding eigenvector $\bm\eta \gg \bm 0$ (or respectively, a simple eigenvalue of $d_* A^T+\text{diag}(m_j)$ with corresponding eigenvector $\bm\va\gg \bm 0$), where
\begin{equation}\label{bmeta}
\begin{split}
&\bm\eta=(\eta_1, \cdots, \eta_n)^T\;\;\text{where}\;\;\eta_j>0\;\;\text{for all}\;\;j=1,2,\cdots,n, \;\;\text{and}\;\; \sum_{j=1}^n {\eta_j}=1,\\
&\bm\va=(\va_1, \cdots, \va_n)^T,\;\;\text{where}\;\;\va_j>0 \;\;\text{for all}\;\;j=1,2,\cdots,n, \;\;\text{and}\;\; \sum_{j=1}^n {\va_j}=1.
\end{split}
\end{equation}
Then we have the following decomposition:
\begin{equation}\label{oplus}
\mathbb{R}^{n}=\operatorname{span}\{{\bm\eta}\} \oplus {X}_{1}=\operatorname{span}\{\bm \va\} \oplus \widetilde{X}_{1},
\end{equation}
where
\begin{equation}\label{X1}
\begin{split}
{X}_{1}&:=\left\{\bm x\in\mathbb R^n: \langle \bm \va,\bm x\rangle=0\right\}
=\left\{\left[d_*A+\text{diag}(m_j)\right]\bm y:\;\;\bm y\in \mathbb R^n\right\},\\
\widetilde{X}_{1}&:=\left\{\bm x\in\mathbb R^n: \langle \bm \eta,\bm x\rangle=0\right\}=\left\{\left[d_*A^T+\text{diag}(m_j)\right]\bm y: \;\;\bm y\in \mathbb R^n\right\}.
\end{split}
\end{equation}
To show the existence of Hopf bifurcation, we describe the profile of the unique positive equilibrium $\bm u^d$ as $d\to0$ or $d\to d_*$. Clearly, $\bm u^d=(u^d_{1},\cdots,u^d_{n})^T$ satisfies
\begin{equation}\label{steady states}
\displaystyle d\sum_{k=1}^{n} \alpha_{jk} u_{k}+ u_j f_j\left( u_{j}, u_{j}\right)=0,\;\; j=1, \cdots, n.
\end{equation}
\begin{lemma}\label{asymp}
Assume that $\bf(H0)$-$\bf(H1)$ hold.
Let $\bm u^d$ be the unique positive equilibrium of \eqref{M1} obtained in Lemma \ref{GAS} for $d\in(0,d_*)$, and denote
\begin{equation}\label{tilder1r2}
\tilde a:=\sum_{j=1}^{n}a_j \eta_j^2 \varsigma_j,\;\;\tilde b:=\sum_{j=1}^{n}b_j \eta_j^2 \varsigma_j,
\end{equation}
where $\bm\eta=(\eta_1, \cdots, \eta_n)^T$ and $\bm\va=(\va_1, \cdots, \va_n)^T$ are defined in \eqref{bmeta}, and
\begin{equation}\label{r1r2}
a_j:=\frac{\partial f_j(0,0)}{\partial x},\;\; b_j:=\frac{\partial f_j(0,0)}{\partial y}\;\;\text{for}\;\; j=1,2,\cdots,n.
\end{equation}
Then the following statements hold.
\begin{enumerate}
\item [$(i)$] Let $\bm u^d=(u_1^{0}, \cdots, u_n^{0})^T$ for $d=0$, where $u_j^{0}$ is the unique positive solution of $f_j(x,x)=0$ for $j=1,\cdots,n$.
Then $\bm u^d$ is continuously differentiable for $d\in[0,d_*)$.
\item [$(ii)$] There exists a continuously differentiable mapping $d \mapsto\left(\beta^{d}, \bm{\xi}^{d}\right)$ from $\left(0,d_{*}\right]$ to $\mathbb{R}^{+} \times X_{1}$ such that, for any $d \in\left(0,d_{*}\right)$, the unique positive equilibrium of \eqref{M1} can be represented as the following form
\begin{equation}\label{ula}
\bm {u}^d=\beta^{d}\left(d_{*}-d\right)\left[\bm{\eta}+\left(d_{*}-d\right) \bm {\xi}^{d}\right]. 
\end{equation}
Moreover,
\begin{equation}\label{beta*}
\beta^{d_{*}}=\frac{\sum_{j=1}^{n} m_j \eta_{j} \va_j}{-d_{*} (\tilde a+\tilde b)}>0,
\end{equation}
and $\bm{\xi}^{d_{*}}=\left(\xi^{d_{*}}_1, \cdots, \xi^{d_{*}}_n\right)^{T} \in X_1$ is the unique solution of the following equation
\begin{equation}\label{xi*}
d_*\left(d_*\sum_{k=1}^{n} \alpha_{jk} \xi_{k}+ m_j \xi_{j}\right)+{\eta}_{j}\left[m_j+ d_{*} \beta^{d_{*}}(a_j+b_j) {\eta}_{j} \right]=0, \;\; j=1, \cdots, n.
\end{equation}
\end{enumerate}
\end{lemma}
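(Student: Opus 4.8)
The plan is to treat \eqref{steady states} as a parametrized family of nonlinear equations and apply the implicit function theorem, with a careful rescaling near $d=d_*$ to resolve the degeneracy caused by $s(d_*)=0$. First I would prove part $(i)$. Define $F:\mathbb{R}^n\times\mathbb{R}\to\mathbb{R}^n$ by $F_j(\bm u,d)=d\sum_k\alpha_{jk}u_k+u_jf_j(u_j,u_j)$. At $d=0$ the equation decouples into $u_j^0 f_j(u_j^0,u_j^0)=0$, whose positive root $u_j^0$ exists and is unique because $g_j(0)=m_j>0$, $g_j'<0$ on $(0,\infty)$ by $\bf(H1)$. The Jacobian $D_{\bm u}F(\bm u^0,0)$ is the diagonal matrix with entries $\partial_{u_j}[u_jg_j(u_j)]|_{u_j^0}=u_j^0 g_j'(u_j^0)<0$, hence invertible, so the implicit function theorem gives a $C^3$ (indeed $C^1$ suffices here) branch $d\mapsto\bm u^d$ near $d=0$ that stays positive; by uniqueness from Lemma \ref{GAS} this branch coincides with the global positive equilibrium on the whole interval $[0,d_*)$, and differentiability on the open part follows from the same IFT argument applied at each interior $d$, where the linearized operator $dA+\mathrm{diag}(f_j+u_j^d\partial_x f_j)$ is invertible (its spectral bound is negative since $\bm u^d$ is linearly stable by Lemma \ref{GAS}).

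For part $(ii)$ the difficulty is that $\bm u^d\to\bm 0$ as $d\to d_*$, so the naive linearization degenerates; the fix is to substitute the ansatz $\bm u^d=\beta(d_*-d)[\bm\eta+(d_*-d)\bm\xi]$ with $\bm\xi\in X_1$ and divide out the scalar $\beta(d_*-d)$. Writing $d=d_*-\varepsilon$ and plugging into \eqref{steady states}, the leading $O(\varepsilon)$ terms cancel by the eigenvector relation $(d_*A+\mathrm{diag}(m_j))\bm\eta=\bm 0$; collecting the next order and using $f_j(u,u)=m_j+(a_j+b_j)u+O(u^2)$ leads to an equation of the form
\begin{equation*}
[d_*A+\mathrm{diag}(m_j)]\bigl(\bm\eta+\varepsilon\bm\xi\bigr)+\varepsilon\,G(\beta,\bm\xi,\varepsilon)=\bm 0,
\end{equation*}
where $G$ is $C^3$ and $G(\beta,\bm\xi,0)_j=-\tfrac{1}{\varepsilon}\cdot\varepsilon\,(\text{stuff})$ collects the $m_j\eta_j$ term plus $d_*\beta(a_j+b_j)\eta_j^2$. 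Dividing by $\varepsilon$ and setting $\varepsilon=0$, the unknowns $(\beta,\bm\xi)\in\mathbb{R}^+\times X_1$ must solve $[d_*A+\mathrm{diag}(m_j)]\bm\xi = -\bigl(m_j\eta_j+d_*\beta(a_j+b_j)\eta_j^2\bigr)_j$, i.e. exactly \eqref{xi*}. The right-hand side lies in the range $X_1$ of $d_*A+\mathrm{diag}(m_j)$ (by \eqref{X1}) if and only if it is orthogonal to $\bm\va$, and that solvability condition reads $\sum_j m_j\eta_j\va_j+d_*\beta\sum_j(a_j+b_j)\eta_j^2\va_j=0$, which solves uniquely for $\beta=\beta^{d_*}$ as in \eqref{beta*}, positive because $\tilde a+\tilde b<0$ (this needs $\partial_x g_j<0$; note $a_j+b_j=g_j'(0)$, so each $a_j+b_j<0$ by $\bf(H1)$, hence $\tilde a+\tilde b<0$ and the numerator $\sum m_j\eta_j\va_j>0$). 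With $\beta^{d_*}$ fixed, $\bm\xi^{d_*}$ is the unique solution in $X_1$.

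To promote this formal computation to a genuine $C^1$ branch on a neighborhood of $\varepsilon=0$, I would set up the map $\Phi:\mathbb{R}\times(\mathbb{R}^+\times X_1)\to\mathbb{R}^n$, $\Phi(\varepsilon,\beta,\bm\xi)=\tfrac{1}{\varepsilon}F(\beta\varepsilon[\bm\eta+\varepsilon\bm\xi],d_*-\varepsilon)$ for $\varepsilon\neq0$, extended continuously (and $C^1$, using $\bf(H1)$'s $C^4$ regularity to spare two derivatives) to $\varepsilon=0$ by the limit computed above. Then $\Phi(0,\beta^{d_*},\bm\xi^{d_*})=\bm 0$, and the partial derivative $D_{(\beta,\bm\xi)}\Phi(0,\beta^{d_*},\bm\xi^{d_*})$ acts on $(\dot\beta,\dot{\bm\xi})\in\mathbb{R}\times X_1$ by $(\dot\beta,\dot{\bm\xi})\mapsto[d_*A+\mathrm{diag}(m_j)]\dot{\bm\xi}+d_*\dot\beta(a_j+b_j)\eta_j^2$; this is an isomorphism onto $\mathbb{R}^n$ precisely because $\mathbb{R}^n=X_1\oplus\mathrm{span}\{(a_j+b_j)\eta_j^2\}$ — equivalently $\langle\bm\va,((a_j+b_j)\eta_j^2)_j\rangle=\tilde a+\tilde b\neq0$ — while $d_*A+\mathrm{diag}(m_j)$ restricted to $X_1$ is a bijection onto $X_1$. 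The implicit function theorem then yields the $C^1$ map $d\mapsto(\beta^d,\bm\xi^d)$ on $(0,d_*]$, and by uniqueness of the positive equilibrium this representation agrees with $\bm u^d$ for $d<d_*$. The main obstacle is exactly the bookkeeping that makes $\Phi$ well-defined and $C^1$ across $\varepsilon=0$: one must Taylor-expand $f_j(\beta\varepsilon[\cdots],\beta\varepsilon[\cdots])$ to the right order, check that every term carries at least one factor $\varepsilon$ before dividing, and verify the remainder is $C^1$ in $(\varepsilon,\beta,\bm\xi)$ — this is where the $C^4$ hypothesis on $f_j$ is spent.
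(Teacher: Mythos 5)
Your argument is correct and follows essentially the same route as the paper's: the implicit function theorem at $d=0$ for part $(i)$, and for part $(ii)$ the rescaled ansatz $\bm u^d=\beta(d_*-d)[\bm\eta+(d_*-d)\bm\xi]$ with $\bm\xi\in X_1$, the solvability condition $\langle\bm\va,\cdot\rangle=0$ that fixes $\beta^{d_*}$ via \eqref{beta*}, and bijectivity of the linearized operator from $\mathbb{R}\times X_1$ to $\mathbb{R}^n$ guaranteed by $\tilde a+\tilde b<0$. One normalization slip to fix: after the eigenvector relation cancels the leading term, every component of $F(\beta\varepsilon[\bm\eta+\varepsilon\bm\xi],d_*-\varepsilon)$ carries a factor $\varepsilon^{2}$, so the map you feed to the implicit function theorem must be $\varepsilon^{-2}F$ rather than $\varepsilon^{-1}F$ (otherwise $\Phi(0,\cdot,\cdot)\equiv\bm 0$ and its partial derivative in $(\beta,\bm\xi)$ vanishes); your subsequent derivative computation already corresponds to the correctly normalized map, which is exactly the paper's $\bm p(d,\beta,\bm\xi)$.
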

\begin{proof}
We first prove $(i)$.
It follows from assumption $\bf(H1)$ that $f_j(x,x)=0$ admits a unique positive solution, denoted by $u_j^{0}$.
Define \begin{equation*}
\begin{split}
&\bm G(d,\bm u)=
\left(\begin{array}{c}
d\sum_{k=1}^n\alpha_{1k}u_k+u_1 f_1(u_1,u_1)
\\
d\sum_{k=1}^n\alpha_{2k}u_k+u_2 f_2(u_2,u_2) \\
\vdots\\
d\sum_{k=1}^n\alpha_{nk}u_k+u_n f_n(u_n,u_n) \\
\end{array}\right).
\end{split}
\end{equation*}
Clearly, $\bm G(0,\bm u^0)=\bm 0$ and  $D_{\bm {u}} \bm G(0,\bm {u}^{0})=\operatorname{diag}\left(u_j^0 (a_j^0+b_j^0)\right)$, where
$D_{\bm {u}} \bm G(0,\bm {u}^{0})$ is the Fr\'echet derivative of $\bm G(d,\bm u)$ with respect to $\bm u$ at $(0,\bm {u}^{0})$, and
\begin{equation}\label{d12}
a_{j}^{0}=\left.\frac{\partial f_j}{\partial x}\right|_{\left( u_{j}^0,u_{j}^0\right)},\;\; b_{j}^{0}=\left.\frac{\partial f_j}{\partial y}\right|_{\left( u_{j}^0,u_{j}^0\right)},\;\;j=1,\cdots,n.
\end{equation}
By assumption $\bf(H1)$, we see that
\begin{equation}\label{aibi0n}
a_j^0+b_j^0<0 \;\;\text{for all}\;\;j=1, \cdots, n,
\end{equation}
which implies that $D_{\bm {u}} \bm G(0,\bm {u}^{0})$ is invertible. It follows from the implicit function theorem that
there exist $ d_1>0$ and a continuously differentiable mapping
$$d\in[0, d_1]\mapsto {{\bm u}}(d)=({u}_1(d),\cdots,{u}_n(d))^T\gg\bm 0$$
such that $\bm G(d,{{\bm u}}(d))=\bm 0$ and $ {{\bm u}}(0)=\bm u^0$. Therefore, $\bm u^d=\bm u(d)$, and $\bm u^d$ is continuously differentiable for $d\in [0,d_1]$. Note that $G(d,\bm u^d)=\bm 0$ for $d\in(0,d_*)$, and $\bm u^d$ is stable. Then, by the implicit function theorem, we obtain that $\bm u^d$ is continuously differentiable for $d\in (0,d_*)$. Here we omit the proof for simplicity.

Now, we prove $(ii)$. It follows from \eqref{oplus} that $\bm u^d$ can be represented as \eqref{ula}. Since $\bm u^d$ is continuously differentiable for $d\in (0,d_*)$, we see that  $\beta^d$ and $\bm {\xi}^{d}$ are also continuously differentiable for $d\in(0, d_*)$.
Then we will show that $\beta^d$ and $\bm {\xi}^{d}$ are continuously differentiable for $d=d_*$.

It follows from \eqref{aibi0n} that
\begin{equation}\label{tildeab<0n}
\tilde a+\tilde b <0,
\end{equation}
which implies that $\beta^{d_*}$ is positive. Since
$$\sum_{j=1}^n m_j {\eta}_{j} \va_j+ d_{*}\beta^{d_{*}} (\tilde a+\tilde b) =0,$$
we see that
$$\left({\eta}_{1}\left[m_1+ d_{*} \beta^{d_{*}}(a_1+b_1) {\eta}_{1} \right],\cdots,{\eta}_{n}\left[m_n+ d_{*} \beta^{d_{*}}(a_n+b_n) {\eta}_{n} \right]\right)^T\in X_1,$$
and consequently
$\bm{\xi}^{d_{*}}\in X_1$ is uniquely defined.

Multiplying \eqref{steady states} by $d_*$, we have
\begin{equation}\label{steady states2}
\displaystyle d\left[d_*\sum_{k=1}^{n} \alpha_{jk} u_{k}+m_ju_j\right]+(d_*-d)u_jm_j+ d_*u_j \left[f_j\left( u_{j}, u_{j}\right)-m_j\right]=0,\;\; j=1, \cdots, n.
\end{equation}
Substituting
$$\bm u=\beta(d_*-d)\left[{\bm \eta}+(d_*-d){\bm \xi}\right]$$
into \eqref{steady states2}, where $\bm \eta$ is defined in \eqref{bmeta} and $\bm \xi=(\xi_1,\cdots, \xi_n)^T\in X_1$, we see that $(\beta,\bm \xi)$ satisfies, for all $j=1,\cdots,n$,
\begin{equation*}
\begin{split}
p_j(d,\beta,\bm\xi)
:=d \left(d_*\sum_{k=1}^n \alpha_{jk}\xi_k+ m_j \xi_j\right) +[\eta_j+(d_*-d)\xi_j]\left(m_j+d_* q_j(d,\beta,\bm\xi)\right)=0,
\end{split}
\end{equation*}
where
\begin{equation}\label{qi}
q_{j}(d,\beta,\bm\xi)=\begin{cases}
\begin{array}{ll}
\displaystyle \frac{f_j \left(u_j, u_j\right)-m_j}{d_{*}-d}, & d \neq d_{*} \\
\displaystyle \beta (a_j+b_j) \eta_j, & d=d_{*}
\end{array}
\end{cases}
\end{equation}
with $u_j=\beta(d_*-d)\left[\eta_j+(d_*-d){\xi_j}\right]$.
Define $\bm p(d, \beta,\bm \xi):\mathbb R\times \mathbb R\times X_1\mapsto \mathbb R^n$ by
\begin{equation*}
\bm p(d, \beta,\bm \xi)=\left(p_1(d,\beta,\bm\xi),\cdots,p_n(d,\beta,\bm\xi)\right)^T.
\end{equation*}
Then $(d,\bm u)$ solves \eqref{steady states} if and only if $\bm p(d,\beta,\bm \xi)=\bm 0$ for $(\beta,\bm \xi)\in\mathbb R\times X_1$.
Clearly, $\bm p(d_*,\beta^{d_*},{\bm \xi}^{d_*})=\bm 0$, and  the Fr\'echet derivative of $\bm p$ with respect to $(\beta,\bm \xi)$ at $(d_*,\beta^{d_*},{\bm \xi}^{d_*})$ is
\begin{equation*}
D_{(\beta,\bm \xi)}\bm p(d_*,\beta^{d_*},{\bm \xi}^{d_*})[\epsilon,\bm v]=d_*\left(\begin{array}{c}
d_*\sum_{k=1}^n\alpha_{1k}v_k+m_1v_1+ (a_1+b_1) \eta_1^2\epsilon \\
d_*\sum_{k=1}^n\alpha_{2k}v_k+m_2v_2+ (a_2+b_2) \eta_2^2\epsilon \\
\vdots\\
d_*\sum_{k=1}^n\alpha_{nk}v_k+m_nv_n+ (a_n+b_n) \eta_n^2\epsilon \\
\end{array}\right),
\end{equation*}
where $\epsilon \in \mathbb R$ and $\bm v =(v_1,\cdots,v_n)^T\in X_1$. Since $\tilde a+\tilde b<0$ from \eqref{tildeab<0n},
we see that $D_{(\beta,\bm \xi)}\bm p(d_*,\beta^{d_*},{\bm \xi}^{d_*})$ is bijective from $\mathbb R\times X_1$ to $\mathbb R^n$.
It follows from the implicit function theorem that there
exist $d_1<d_*$ and a continuously differentiable mapping $d\in[d_1,d_*]\mapsto(\tilde \beta^d,\tilde {\bm \xi}^d)\in {\mathbb R}\times X_1$ such that
${\bm p}(d,\tilde \beta^d,\tilde {\bm \xi}^d)=\bm 0$, and $\tilde \beta^d=\beta^{d_*}$ and $\tilde{\bm \xi}^{d}={\bm \xi}^{d_*}$ for $d=d_*$.
The uniqueness of the positive equilibrium of \eqref{M1} implies that $\beta^d=\tilde \beta^d$ and $\bm {\xi}^{d}=\tilde{\bm \xi}^d$ for $d\in[d_1,d_*)$. Therefore, $\beta^d$ and $\bm {\xi}^{d}$ are continuously differentiable
for $d\in(0, d_*]$.
\end{proof}

\section{Stability and Hopf bifurcation}
In this section, we consider the stability of the unique positive equilibrium $\bm u^d$, and show the existence/nonexistence of a Hopf bifurcation for model \eqref{M1}. Linearizing \eqref{M1} at $\bm{u}^d$, we have
\begin{equation}
\label{2linear}
\displaystyle\frac{d \bm {v}}{d t} =dA\bm{ v}+{\rm diag}\left(f_j(u_j^d,u_j^d)\right)\bm v+{\rm diag}\left(u_j^d a_j^d\right)\bm v+ {\rm diag}(u_j^d b_j^d)\bm{ v}(t-\tau),
\end{equation}
where
\begin{equation}\label{abd}
a_{j}^{d}=\left.\frac{\partial f_j}{\partial x}\right|_{\left( u_{j}^d,u_{j}^d\right)},\;\; b_{j}^{d}=\left.\frac{\partial f_j}{\partial y}\right|_{\left( u_{j}^d,u_{j}^d\right)}.
\end{equation}
It follows from \cite{Wu1996Theory} that the solution semigroup of \eqref{2linear} has the infinitesimal generator $A_{\tau} (d)$ satisfying
\begin{equation*}
A_{\tau}(d) \bm{\Psi}=\dot{\bm{\Psi}},
\end{equation*}
and the domain of $A_{\tau}(d)$ is
\begin{equation*}
\begin{aligned}
\mathscr{D}\left(A_{\tau}(d)\right)=\big\{&\bm\Psi \in C_{\mathbb{C}} \cap C_{\mathbb{C}}^{1}:\bm\Psi(0)\in {\mathbb{C}}^n,
\dot{\bm\Psi}(0)=dA\bm \Psi(0)+\operatorname{diag} \left(f_j (u_j^d,u_j^d)\right)\bm \Psi(0) \\ &+\operatorname{diag}\left(u_j^d a_{j}^{d}\right)\bm\Psi(0)+\operatorname{diag}\left(u_j^d b_{j}^{d}\right)\bm \Psi(-\tau)\big\},
\end{aligned}
\end{equation*}
where $C_{\mathbb C}=C([-{\tau},0],\mathbb{C}^n)$ and $C^1_\mathbb{C}=C^1([-{\tau},0],\mathbb{C}^n)$. Then, we see that $\mu\in\mathbb{C}$ is an eigenvalue of $A_{\tau}(d)$, if and only if there exists $\bm{\varphi}=(\varphi_1,\cdots,\varphi_n)^T(\ne\bm 0)\in\mathbb{C}^n$ such that
\begin{equation}\label{2triangle}
\begin{split}
\Delta(d, \mu, \tau)\bm \varphi:=&dA\bm \varphi+ \operatorname{diag} \left(f_j (u^d_{j},u^d_{j})\right)\bm \varphi +  \operatorname{diag}\left(u^d_{j} a_{j}^{d}\right)\bm\varphi\\
&+ e^{-\mu \tau}\operatorname{diag}\left(u^d_{j} b_{j}^{d}\right)\bm\varphi - \mu \bm \varphi=\bm 0.
\end{split}
\end{equation}
Here the dispersion matrix $A$ may be asymmetric, and the environment can also be spatially heterogeneous. Therefore, one cannot obtain the explicit expression of $\bm u^d$. By Lemma \ref{asymp}, we obtain the asymptotic profile of $\bm u^{d}$ as $d\to0$ or $d\to d_*$.
Then the following discussion is divided into two cases: (I) $0<d_*-d\ll 1$, and (II) $0<d\ll 1$.

\subsection{The case of $0<d_*-d \ll 1$}
In this section, we will consider the existence of a Hopf bifurcation for \eqref{M1} with $0<d_*-d \ll 1$.
First, we obtain \textit{a priori} estimates for solutions of \eqref{2triangle}.
\begin{lemma}\label{bounded1}
Assume that $\left(\mu_{d}, \tau_{d}, \bm{\psi}_d\right)$ solves \eqref{2triangle} for $d \in (0,d_*)$, where $\mathcal{R}e \mu_{d}, \tau_{d} \ge 0$, and $\bm{\psi}_d=(\psi_{d,1},\cdots,\psi_{d,n})^T(\ne\bm 0)
\in\mathbb{C}^n$. Then there exists $d_1\in(0,d_*)$ such that $\left|\ds\frac{\mu_d}{d_*-d}\right|$ is bounded for $d\in[d_1,d_*)$. Moreover, ignoring a scalar factor, $\bm \psi_{{d}}$ can be represented as follows:
\begin{equation}\label{psirl1}
\begin{cases}
\bm \psi_{{d}}=r_{{d}} \bm \eta+ \bm w_{{d}},  \;\bm w_{{d}} \in\left(X_{1}\right)_{\mathbb{C}}, \; r_{{d}} \geq 0, \\
\|\bm{\psi}_d\|_2^2=\|\bm \eta\|_2^2,
\end{cases}
\end{equation}
where $\bm \eta$ is defined in \eqref{bmeta}, and $r_{{d}}$, $\bm w_{{d}}$ and $\bm \psi_{{d}}$ satisfy
$$\lim_{d \rightarrow d_*}r_{d}=1,\;\;\lim_{d \rightarrow d_*} \bm w_{d}=\bm 0,\;\;\lim_{d \rightarrow d_*}\bm \psi_{{d}}=\bm \eta. $$
\end{lemma}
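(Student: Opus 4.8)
The plan is to normalize the eigenfunctions via \eqref{psirl1} and then extract a convergent subsequence, identifying the limit with the Perron eigenvector $\bm\eta$ and the limit of $\mu_d/(d_*-d)$ with a finite number. First I would fix the normalization: given any solution $(\mu_d,\tau_d,\bm\psi_d)$ of \eqref{2triangle}, decompose $\bm\psi_d = r_d\bm\eta + \bm w_d$ according to the splitting $\mathbb R^n_\mathbb C = \operatorname{span}\{\bm\eta\}\oplus (X_1)_\mathbb C$ from \eqref{oplus}. Since the eigenvalue equation is linear and homogeneous in $\bm\psi_d$, I may multiply by a scalar so that $r_d\ge 0$ and $\|\bm\psi_d\|_2 = \|\bm\eta\|_2$, which makes $r_d$ and $\|\bm w_d\|_2$ uniformly bounded (by $\|\bm\eta\|_2$). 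The crux is then to show $|\mu_d/(d_*-d)|$ is bounded and that $\bm w_d\to\bm 0$, $r_d\to1$.

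The key computational step is to substitute $\bm\psi_d = r_d\bm\eta + \bm w_d$ into \eqref{2triangle} and expand each coefficient using the asymptotic profile of $\bm u^d$ from Lemma \ref{asymp}. By \eqref{ula}, $u_j^d = \beta^d(d_*-d)[\eta_j + (d_*-d)\xi_j^d]$, so $u_j^d = O(d_*-d)$ uniformly in $j$; hence $\operatorname{diag}(u_j^d a_j^d)$ and $\operatorname{diag}(u_j^d b_j^d)$ are $O(d_*-d)$, and $f_j(u_j^d,u_j^d) = m_j + O(d_*-d)$ since $f_j$ is $C^1$ and $u_j^d\to0$. Writing $\mu_d = (d_*-d)\nu_d$, the equation \eqref{2triangle} becomes, after dividing appropriately,
\begin{equation*}
\big[dA + \operatorname{diag}(m_j)\big]\bm\psi_d + (d_*-d)\, O(1)\bm\psi_d - (d_*-d)\nu_d\bm\psi_d = \bm 0,
\end{equation*}
where the $O(1)$ terms depend on $\beta^d$, $\bm\xi^d$, $a_j^d$, $b_j^d$, $e^{-\mu_d\tau_d}$, all of which are bounded (the exponential because $\mathcal Re\,\mu_d\ge0$ and $\tau_d\ge0$ force $|e^{-\mu_d\tau_d}|\le 1$). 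Now I would pair this identity with the adjoint Perron eigenvector $\bm\va$: since $\langle\bm\va, [dA+\operatorname{diag}(m_j)]\bm x\rangle = s(d)\langle\bm\va,\bm x\rangle$ only at $d=d_*$, I instead write $dA+\operatorname{diag}(m_j) = d_*A+\operatorname{diag}(m_j) - (d_*-d)A$ and use $\langle\bm\va,[d_*A+\operatorname{diag}(m_j)]\bm\psi_d\rangle = 0$. Dividing through by $(d_*-d)$, the leading term drops out and one solves for $\nu_d\langle\bm\va,\bm\psi_d\rangle$ in terms of bounded quantities; since $\langle\bm\va,\bm\psi_d\rangle = r_d\langle\bm\va,\bm\eta\rangle + 0$ and $\langle\bm\va,\bm\eta\rangle>0$, this gives a bound on $|\nu_d|\,r_d$. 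Separately, projecting the equation onto $X_1$ (applying the bounded inverse of $d_*A + \operatorname{diag}(m_j)$ restricted to $X_1$, which exists since $0$ is a simple eigenvalue) yields $\bm w_d = (d_*-d)\,O(1)(r_d\bm\eta + \bm w_d) = O(d_*-d)$, hence $\bm w_d\to\bm 0$; then the normalization $\|r_d\bm\eta+\bm w_d\|_2 = \|\bm\eta\|_2$ forces $r_d\to1$, so in particular $r_d$ is bounded below away from $0$ for $d$ near $d_*$, which upgrades the bound on $|\nu_d|r_d$ to a bound on $|\nu_d| = |\mu_d/(d_*-d)|$. Finally $\bm\psi_d = r_d\bm\eta + \bm w_d\to\bm\eta$.

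The main obstacle I anticipate is bounding $|\mu_d/(d_*-d)|$: a priori $\mu_d$ could have large imaginary part even though the other coefficients are tame, so one must genuinely exploit the spectral structure — specifically that $\bm\psi_d$ stays close to the one-dimensional kernel direction — rather than a crude norm estimate on \eqref{2triangle}. The pairing with $\bm\va$ and the $X_1$-projection must be carried out simultaneously and bootstrapped: the bound on $\nu_d$ needs $r_d$ bounded below, which needs $\bm w_d\to 0$, which in turn needs the $O(1)$ coefficients — including $e^{-\mu_d\tau_d}$ — to be bounded, and that is exactly where $\mathcal Re\,\mu_d\ge0$, $\tau_d\ge0$ is used. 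Care is also needed because the splitting \eqref{oplus} is into the $d_*$-eigenspaces, not the $d$-eigenspaces, so the residual term $(d_*-d)A\bm\psi_d$ must be tracked explicitly; this is precisely the modification of the arguments of \cite{ChenHopf,HuangChenS} alluded to in the introduction, accounting for the population-loss term that makes $s(d_*)=0$ rather than the row sums vanishing.
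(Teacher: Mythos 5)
Your overall strategy---normalize via \eqref{psirl1}, pair the eigenvalue equation with the adjoint Perron vector $\bm\va$ to kill the leading term, and control the complementary component in $X_1$---is reasonable, and your $\bm\va$-pairing step is essentially the paper's derivation of \eqref{H1}: it correctly yields $|\mu_d|\,r_d\le C(d_*-d)$. Where you diverge is in how $r_d\to1$ and $\bm w_d\to\bm 0$ are obtained: the paper first bounds $|\mu_d|$ by the ``crude'' inner-product estimate, extracts a convergent subsequence, passes to the limit in \eqref{2triangle} to see that $\lim\mu_d$ is an eigenvalue $\gamma$ of $d_*A+\operatorname{diag}(m_j)$ with $\mathcal{R}e\,\gamma\ge0$, and invokes the Perron--Frobenius property to force $\gamma=0$ and $\bm\psi^*=\bm\eta$; you instead attempt a direct quantitative estimate on the $X_1$-component.

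That is where the gap is. Projecting \eqref{2triangle} onto $(X_1)_{\mathbb C}$ along $\bm\eta$ gives
\[
\bigl[d_*A+\operatorname{diag}(m_j)-\mu_d I\bigr]\bm w_d=O(d_*-d),
\]
not $[d_*A+\operatorname{diag}(m_j)]\bm w_d=O(d_*-d)$: the term $-\mu_d\bm w_d$ lies in $(X_1)_{\mathbb C}$ and survives the projection, and you cannot absorb it into the right-hand side because at that point in your bootstrap you have no bound on $|\mu_d|$ (your pairing step only controls $|\mu_d|r_d$, and $r_d$ is not yet known to be bounded below). Inverting only $d_*A+\operatorname{diag}(m_j)$ on $X_1$, as you propose, therefore does not close the loop. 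The repair needs exactly the spectral input you never invoke: for an irreducible essentially nonnegative matrix, every eigenvalue other than $s(d_*A+\operatorname{diag}(m_j))=0$ has strictly negative real part, so $d_*A+\operatorname{diag}(m_j)-zI$ restricted to $(X_1)_{\mathbb C}$ has an inverse bounded uniformly over $\{\mathcal{R}e\,z\ge0\}$; with that resolvent bound your estimate $\bm w_d=O(d_*-d)$ goes through and the rest of the bootstrap is fine. Relatedly, your dismissal of the crude norm estimate is misplaced: taking the inner product of \eqref{2triangle} with $\bm\psi_d$ bounds $|\mu_d|$ itself (imaginary part included) uniformly on $(0,d_*)$---this is the paper's first step, and it is what you would need anyway if you preferred to repair the gap by the compactness route rather than the resolvent bound.
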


\begin{proof}
We first show that $|\mu_d|$ is bounded for $d\in(0,d_*)$.
Substituting $\left(\mu_{d}, \tau_{d}, \bm{\psi}_d\right)$ into \eqref{2triangle}, we have
\begin{equation}\label{signlep}
\ds d\sum_{k=1}^n \alpha_{jk}\psi_{d,k}+f_j (u^d_{j},u^d_{j})\psi_{d,j}+u^d_{j}a_{j}^{d} \psi_{d,j}+u^d_{j} b_{j}^{d}\psi_{d,j}e^{-\mu_d\tau_d} -\mu_d\psi_{d,j}=0,\;\;j=1,\cdots,n.
\end{equation}
Multiplying \eqref{signlep} by $\overline \psi_{d,j}$ and summing the result over all $j$ yield
\begin{equation*}
\begin{aligned}
&d\sum_{j=1}^{n} \sum_{k =1}^n \alpha_{jk} \overline\psi_{d,j} \psi_{d,k}+ \sum_{j=1}^{n} f_j(u^d_{j},u^d_{j})|\psi_{d,j}|^2 +\sum_{j=1}^{n}u^d_{j} a_{j}^{d} |\psi_{d,j}|^2\\
&+ e^{-\mu_d \tau_d} \sum_{j=1}^{n}u^d_{j} b_{j}^{d} |\psi_{d,j}|^2- \mu_d \sum_{j=1}^{n}|\psi_{d, j}|^{2}=0.
\end{aligned}
\end{equation*}
Since $\|\bm{\psi}_d\|_2^2=\|\bm \eta\|_2^2$, we see that, for $d\in(0,d_*)$,
\begin{equation*}
\begin{aligned}
|\mu_d|\le& \max_{d\in[0,d_*],1\le j\le n}|f_j(u^d_{j},u^d_{j})|+ \max_{d\in[0,d_*],1\le j\le n}|u^d_{j}a_j^{d}| \\
&+ \max_{d\in[0,d_*],1\le j\le n}|u^d_{j}b_j^{d}|+n d_*\max_{1\le j,k\le n} |\alpha_{jk}|,
\end{aligned}
\end{equation*}
which implies that $|\mu_d|$ is bounded for $d\in(0,d_*)$.

Clearly, ignoring a scalar factor, $\bm \psi_{{d}}$ can be represented as \eqref{psirl1}.
Note from \eqref{psirl1} that $\|\bm{\psi}_d\|_2^2=\|\bm \eta\|_2^2$. Then, up to a subsequence, we can assume that
\begin{equation}\label{psistar111}
\lim_{d\to d_*}\mu_{d}=\gamma,\;\;\lim_{d\to d_*}\bm{\psi}_{d}=\lim_{d\to d_*}\left(r_{d}\bm \eta+ \bm w_{d}\right)=\bm \psi^*
\end{equation}
with $\mathcal{R}e \gamma\ge0$ and $\|\bm{\psi}^*\|_2^2=\|\bm \eta\|_2^2$.
This, combined with \eqref{2triangle}, implies that
\begin{equation*}
(d_*A+\text{diag}(m_j))\bm{\psi}^*-\gamma\bm{\psi}^*=\bm 0,
\end{equation*}
and consequently, $\gamma$ is an eigenvalue of $d_*A+\text{diag}(m_j)$.
Then, by \cite[Corollary 4.3.2]{Smith1995Monotone}, we have  $\gamma=s(d_*A+\text{diag}(m_j))=0$.
This, combined with \eqref{psirl1} and \eqref{psistar111}, implies that $\bm{\psi}^*=\bm \eta$, and consequently,
\begin{equation}\label{limrw21}
 \lim_{d\to d_*}r_{d}=1,\;\;\lim_{d\to d_* } \bm w_{d}=\bm 0.
\end{equation}

 Then multiplying \eqref{signlep} by $d_*$, we have
\begin{equation}\label{signlep2}
\begin{split}
&0=d\left[d_*\ds\sum_{k=1}^n \alpha_{jk}\psi_{d,k}+m_j\psi_{d,j}\right]+(d_*-d)m_j\psi_{d,j}+d_*\left[f_j (u^d_{j},u^d_{j})-m_j\right]\psi_{d,j}\\
&~~~~+d_*u^d_{j}a_{j}^{d} \psi_{d,j}+d_*u^d_{j} b_{j}^{d}\psi_{d,j}e^{-\mu_d\tau_d} -d_*\mu_d\psi_{d,j},\;\;\;\;\;\;\;j=1,\cdots,n.
\end{split}
\end{equation}
Plugging \eqref{ula} and \eqref{psirl1} into \eqref{signlep2}, we have, for $j=1,\cdots,n$,
\begin{equation}\label{Arla111}
\begin{aligned}
&0=\ds d\left(d_*\sum_{k=1}^n \alpha_{jk}w_{{d},k}+ m_j w_{{d},j} \right)+(d_*-d) \left[m_j+d_* q_j(d,\beta^d,\bm\xi^d)\right](r_{d} \eta_j+w_{{d},j})\\
&~~~~-d_* \mu_{d}(r_{d} \eta_j+ w_{{d},j})+d_*(d_*-d) \beta^{d}(a_j^{d}+b_j^{d}e^{-\mu_{d} \tau_{d}})\left[\eta_j+(d_*-d)\xi^{d}_j\right](r_{d} \eta_j+w_{{d},j}),
\end{aligned}
\end{equation}
where $q_j(d,\beta,\bm\xi)$ is defined in \eqref{qi}.
Note that $\bm \va$  is the eigenvector of $d_*A^T+\text{diag}(m_j)$ with respect to eigenvalue $0$. This, combined with \eqref{X1}, implies that
$$\sum_{j=1}^n\left(d_*\sum_{k=1}^n \alpha_{jk}w_{{d},k}+ m_j w_{{d},j} \right)\va_{j}=0,\;\;\sum_{j=1}^n w_{{d},j}\va_{j}=0.$$
Then multiplying \eqref{Arla111} by $\va_{j}$ and summing the result over all $j$ yield
\begin{equation}\label{H1}
\begin{split}
\ds\frac{\mu_{d}}{d_*-d}=&\frac{ \beta^{d}\sum_{j=1}^n \va_j (a_j^{d}+b_j^{d}e^{-\mu_{d} \tau_{d}})(r_{d} \eta_j+w_{{d},j})[\eta_j+(d_*-d) \xi^{d}_j]}{r_{d} \sum_{j=1}^n \eta_j \va_j}\\
&+\frac{\sum_{j=1}^n \va_j\left[m_j+{d_*} q_j(d,\beta^d,\bm\xi^d)\right](r_{d} \eta_j+ w_{{d},j})}{d_* r_{d} \sum_{j=1}^n \eta_j \va_j}.
\end{split}
\end{equation}
This, combined with \eqref{limrw21}, implies that there exists $d_1\in(0,d_*)$ such that $\left|\ds\frac{\mu_d}{d_*-d}\right|$ is bounded for $d\in[d_1,d_*)$.
\end{proof}

By Lemma \ref{bounded1}, we have the following result.
\begin{theorem}\label{sstability}
Assume that $\bf(H0)$-$\bf(H1)$ hold, and $\tilde a-\tilde b<0$,
where $\tilde a$ and $\tilde b$ are defined in \eqref{tilder1r2}.
Then there exists $d_2\in [d_1,d_*)$, such that
$$\sigma\left(A_{\tau}(d)\right) \subset\{x+ {\rm i} y: x, y \in \mathbb{R}, x<0\}\;\;\text{for}\;\;d\in[d_2,d_*)\;\;\text{and}\;\;\tau\ge0.$$
\end{theorem}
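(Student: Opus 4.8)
The plan is to argue by contradiction, using the \textit{a priori} estimate of Lemma~\ref{bounded1} to extract a limiting scalar equation for $\mu_d/(d_*-d)$ and then to read off the sign of its real part. Recall that the spectrum of $A_\tau(d)$ consists precisely of the roots $\mu$ of \eqref{2triangle}, only finitely many of which lie in any closed right half-plane (see \cite{Wu1996Theory}); hence the claim is equivalent to the statement that, for $d$ close to $d_*$, the equation \eqref{2triangle} has no solution $\mu$ with $\mathcal{R}e\,\mu\ge0$ and $\tau\ge0$. Suppose not. Then there are sequences $d_n\uparrow d_*$ with $d_n\ge d_1$, $\tau_n\ge0$, and $\mu_n$, $\bm\psi_n\ne\bm0$ solving \eqref{2triangle} with $\mathcal{R}e\,\mu_n\ge0$. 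Apply Lemma~\ref{bounded1} to each $(\mu_n,\tau_n,\bm\psi_n)$: after the normalization \eqref{psirl1} we write $\bm\psi_n=r_n\bm\eta+\bm w_n$ with $\bm w_n\in(X_1)_{\mathbb C}$, $r_n\ge0$, and $r_n\to1$, $\bm w_n\to\bm0$, $\bm\psi_n\to\bm\eta$, while $|\mu_n/(d_*-d_n)|$ remains bounded; in particular $\mu_n\to0$. Since $\mathcal{R}e\,\mu_n\ge0$ we have $|e^{-\mu_n\tau_n}|=e^{-(\mathcal{R}e\,\mu_n)\tau_n}\le1$ whether or not $\tau_n$ stays bounded, so along a further subsequence $\mu_n/(d_*-d_n)\to\kappa$ with $\mathcal{R}e\,\kappa\ge0$, and $e^{-\mu_n\tau_n}\to\sigma$ with $|\sigma|\le1$.

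Next I would pass to the limit $n\to\infty$ in the identity \eqref{H1}, which is just \eqref{2triangle} projected onto $\bm\va$. By \eqref{ula} we have $\bm u^{d_n}\to\bm0$, hence $a_j^{d_n}\to a_j$, $b_j^{d_n}\to b_j$ by continuity of $Df_j$; also $\beta^{d_n}\to\beta^{d_*}$, $\bm\xi^{d_n}\to\bm\xi^{d_*}$ by Lemma~\ref{asymp}, and $q_j(d_n,\beta^{d_n},\bm\xi^{d_n})\to\beta^{d_*}(a_j+b_j)\eta_j$ by \eqref{qi}. Using $r_n\to1$, $\bm w_n\to\bm0$, the first sum in \eqref{H1} converges to $\beta^{d_*}\big(\sum_{j}(a_j+b_j\sigma)\eta_j^2\va_j\big)\big/\big(\sum_j\eta_j\va_j\big)=\beta^{d_*}(\tilde a+\sigma\tilde b)\big/\sum_j\eta_j\va_j$ by \eqref{tilder1r2}, while the numerator of the second sum tends to $\sum_j m_j\eta_j\va_j+d_*\beta^{d_*}(\tilde a+\tilde b)$, which is $0$ by the definition \eqref{beta*} of $\beta^{d_*}$; thus the second term drops out and we obtain
\begin{equation*}
\kappa=\frac{\beta^{d_*}\,(\tilde a+\sigma\tilde b)}{\sum_{j=1}^{n}\eta_j\va_j}.
\end{equation*}

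To finish, since $\tilde a,\tilde b\in\mathbb R$ we get $\mathcal{R}e\,\kappa=\beta^{d_*}\big(\tilde a+\tilde b\,\mathcal{R}e\,\sigma\big)\big/\sum_j\eta_j\va_j$. Because $|\mathcal{R}e\,\sigma|\le1$, $\tilde a+\tilde b\,\mathcal{R}e\,\sigma\le\tilde a+|\tilde b|=\max\{\tilde a+\tilde b,\ \tilde a-\tilde b\}$, and both of these are negative: the first by \eqref{tildeab<0n}, the second by the hypothesis $\tilde a-\tilde b<0$. Since $\beta^{d_*}>0$ by \eqref{beta*} and $\sum_j\eta_j\va_j>0$, this gives $\mathcal{R}e\,\kappa<0$, contradicting $\mathcal{R}e\,\kappa\ge0$. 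Hence the desired $d_2\in[d_1,d_*)$ exists.

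The main obstacle is the limiting step of the second paragraph: one must justify the term-by-term passage to the limit in \eqref{H1}, with the possibly unbounded delays $\tau_n$ controlled only through $|e^{-\mu_n\tau_n}|\le1$, and must observe the exact cancellation of the $m_j$-term, which hinges on the formula \eqref{beta*}. The closing sign estimate is short but is precisely where the standing assumption $\tilde a-\tilde b<0$ is used; it adds information over \eqref{tildeab<0n} only when $\tilde b<0$, since otherwise $\tilde a+\tilde b<0$ already forces $\tilde a-\tilde b<0$.
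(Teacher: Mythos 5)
Your proposal is correct and follows essentially the same route as the paper: contradiction, the \textit{a priori} bounds of Lemma~\ref{bounded1}, passage to the limit in \eqref{H1} with the $m_j$-term cancelling via \eqref{beta*}, and a sign argument from $\tilde a+\tilde b<0$ and $\tilde a-\tilde b<0$. The only (cosmetic) difference is the final step: you bound $\mathcal{R}e\,\kappa$ directly by $\tilde a+|\tilde b|<0$, whereas the paper derives $-\tfrac{\tilde b}{\tilde a}\sigma^{*}\cos\theta^{*}\ge 1$ and contradicts $|{-\tilde b/\tilde a}|<1$; both are valid.
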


\begin{proof}
If the conclusion is not true, then there exists a positive sequence $\left\{d_{l}\right\}_{l=1}^{\infty}$ such that $\lim _{l \rightarrow \infty} d_{l}=d_*,$ and, for $l \geq 1$, $\Delta\left(d_{l}, \mu, \tau\right)\bm \psi=0$ is solvable for some value of
$\left(\mu_{{d_l}}, \tau_{{d_l}},\bm \psi_{{d_l}}\right)$ with $\mathcal{R} e \mu_{{d_l}}, \mathcal{I} m \mu_{{d_l}} \geq 0, \tau_{{d_l}} \geq 0$ and $\bm 0 \neq \bm \psi_{{d_l}} \in {\mathbb{C}}^n$. Note from the proof of Lemma \ref{bounded1} that $\left\{\left|\frac{\mu_{d_l}}{d_*-d_l}\right|\right\}_{l=1}^\infty$ and $\left\{\left| \mu_{d_l} \right|\right\}_{l=1}^\infty$ are bounded. Then we see that there exists a subsequence $\left\{d_{l_{k}}\right\}_{k=1}^{\infty}$ (we still use $\{{d}_l\}_{l=1}^{\infty}$ for convenience) such that
\begin{equation}\label{limtsig}
\lim_{l\to\infty}\frac{\mu_{d_l}}{d_*-d}=\mu^*,\;\;\lim_{l\to\infty}(e^{{-\tau_{d_l}}(\mathcal{R} e \mu_{{d_l}})}, e^{{-{\rm i}\tau_{d_l}}(\mathcal{I} m \mu_{{d_l}})})=(\sigma^*,e^{-{\rm i}\theta^*}),
\end{equation}
where \begin{equation*}
\sigma^*\in[0,1],\;\;\theta^*\in [0,2\pi),\;\;\mu^* \in \mathbb{C}\left(\mathcal{R} e \mu^{*},\mathcal{I} m \mu^{*} \geq 0\right).
\end{equation*}

It follows from Lemma \ref{bounded1} that
$\lim_{l \rightarrow \infty}r_{d_l}=1$, $\lim_{l \rightarrow \infty}\bm w_{d_l}=\bm 0$. By \eqref{ula} and \eqref{abd}, we have
$a_j^d=a_j$ and $b_j^d=b_j$ for $d=d_*$, where $a_j$ and $b_j$ are defined in \eqref{r1r2}. Then,
substituting $d=d_l$, $\mu_d=\mu_{d_l}$, $r_d=r_{d_l}$ and $\bm w_{d}=\bm w_{d_l}$ into \eqref{H1} and taking $l\to\infty$, we see from
\eqref{qi} and \eqref{limtsig} that
\begin{equation}\label{muesti}
\begin{split}
\ds\mu^*=
\frac{\sum_{j=1}^n\eta_j\va_j\big\{[m_j+d_* \beta^{d_*}\eta_j (a_j+b_j)]+d_* \beta^{d_*}\eta_j(a_j +b_j \sigma^* e^{-{\rm i} \theta^*})\big\} }{d_*\sum_{j=1}^n \eta_j \va_j}.
\end{split}
\end{equation}
By \eqref{beta*}, we have
$$\sum_{j=1}^n\eta_j\va_j\big\{[m_j+d_* \beta^{d_*}\eta_j (a_j+b_j)]=0.$$
This, combined with \eqref{tilder1r2} and \eqref{muesti}, yields
\begin{equation}\label{Eqconsin}
\begin{cases}
\ds \beta^{d_*}(\tilde a+\sigma^* \tilde b \cos\theta^* )=\mathcal{R} e \mu^{*}\sum_{j=1}^n\eta_j\va_j\ge0,\\
\mathcal{I}m \mu^{*}\sum_{j=1}^n\eta_j\va_j+ \beta^{d_*}\sigma^* \tilde b \sin\theta^* =0.
\end{cases}
\end{equation}
It follows from $\bf(H1)$ (see also \eqref{tildeab<0n}) that $\tilde a+\tilde b<0$.
Then if $\tilde a-\tilde b<0$, we have
$$
\tilde a<\min \left\{\tilde b,-\tilde b\right\} \leq 0 \text { and }-1<-\frac{\tilde b}{\tilde a}<1.
$$
This, combined with the first equation of \eqref{Eqconsin}, yields
$$
-\frac{\tilde b}{\tilde a}  \sigma^{*}\cos \theta^{*} \geq 1,$$
which is a contradiction. This completes the proof.
\end{proof}

From Theorem \ref{sstability}, we see that if $\tilde a -\tilde b<0$, then the positive equilibrium $\bm u^d$ is locally asymptotically stable for $0<d_*-d\ll 1$, and Hopf bifurcations can not occur.
Next, we show the existence of a Hopf bifurcation for $\tilde a -\tilde b>0$.
Clearly, $A_\tau(d)$ has a purely imaginary eigenvalue $\mu = {\rm i} \nu (\nu >0)$ for some $\tau \ge 0$, if and only if
\begin{equation}\label{Deltaiv}
\begin{split}
\bm H(d,\nu,\theta, \bm\varphi):=&dA\bm{\varphi}+{\rm diag}\left(f_j(u_j^d,u_j^d)\right)\bm \varphi+\operatorname{diag}\left(u_j^d a_{j}^{d}\right)\bm \varphi\\
&+e^{-{\rm i}\theta}{\rm diag}(u_j^d b_{j}^{d})\bm{ \varphi}-{\rm i}\nu\bm{\varphi}=\bm 0
\end{split}
\end{equation}
is solvable for some value of $ \nu>0, \theta \in [0, 2\pi)$ and $\bm \varphi (\ne\bm 0)\in \mathbb{C}^n$.
Ignoring a scalar factor, $\bm \psi (\ne\bm 0)\in \mathbb{C}^n$ in \eqref{Deltaiv} can be represented as follows:
\begin{equation}\label{psi}
\begin{split}
&\bm \psi=r \bm \eta+ \bm w,  \;\; \bm w \in\left(X_{1}\right)_{\mathbb{C}}, \;\; r \geq 0, \\
&\|\bm \psi\|_2^{2}=r^2\|\bm \eta\|_2^2+r \sum_{j=1}^n\eta_j(w_j+\overline w_j)+ \|\bm w\|_2^2=\|\bm \eta\|_2^2.
\end{split}
\end{equation}
Then we obtain an equivalent problem of \eqref{Deltaiv} as follows.

\begin{lemma}
Assume that $d\in(0,d_*)$. Then $(\nu,\theta,\bm \psi)$ is a solution of \eqref{Deltaiv}, where $\nu=(d_*-d)h>0$, $\theta\in[0,2\pi)$ and $\bm \psi$ satisfies \eqref{psi}, if and only if $(\bm w,r, h,\theta)$ solves the following system:
\begin{equation}\label{Fmain}
\begin{cases}
\bm F(\bm w,r, h,\theta, d)=(F_{1,1}, \cdots, F_{1,n},F_2,F_3)^T=\bm  0,\\
\bm w \in (X_1)_{\mathbb C},\;r\ge 0,\;h > 0, \;\theta \in[0,2 \pi).
\end{cases}
\end{equation}
Here
$\bm F(\bm w,r, h,\theta, d):(X_1)_{\mathbb C}\times \mathbb{R}^4 \mapsto (X_1)_{\mathbb C}\times \mathbb{C}\times \mathbb{R}$ is continuously differentiable, and
\begin{equation}\label{F}
\begin{cases}
\begin{aligned}
&\displaystyle F_{1,j}(\bm w,r, h,\theta, d):= d \left(d_*\sum_{k=1}^n \alpha_{jk}w_{k}+ m_j w_{j}\right)-(d_*-d)F_2(\bm w,r,h,\theta, d)\\
&~~~~~~~~~~~~~~~~~~~~~~~+(d_*-d)\left[m_j+d_*q_j(d,\beta^d,\bm\xi^d)-{\rm i}d_*h\right](r \eta_j+ w_{j})\\
&~~~~~~~~~~~~~~~~~~~~~~~+d_* (d_*-d)\beta^{d}\left[\eta_j+(d_*-d)\xi^d_j\right]\left(a_j^{d}+b_j^{d}e^{-{\rm i}\theta}\right)(r \eta_j+ w_{j}),\\
&\displaystyle F_2(\bm w,r,h,\theta, d):=\sum_{j=1}^n\va_j\left[m_j+d_*q_j(d,\beta^d,\bm\xi^d)-{\rm i}d_*h\right](r \eta_j+ w_{j})\\
&~~~~~~~~~~~~~~~~~~~~~~~+\sum_{j=1}^n\va_jd_*\beta^{d}[\eta_j+(d_*-d)\xi^{d}_j]\left(a_j^{d}+b_j^{d}e^{-{\rm i}\theta}\right)(r \eta_j+ w_{j}),\\
&\displaystyle F_3(\bm w,r,h,\theta, d):=(r^2-1)\|\bm \eta\|_2^2+r \sum_{j=1}^n\eta_j(w_j+\overline w_j)+ \|\bm w\|_2^2,
\end{aligned}
\end{cases}
\end{equation}
where $q_j(d,\beta,\bm\xi)$ and $a_j^{d}$, $b_j^{d}$ are defined in \eqref{qi} and \eqref{abd}, respectively.
\end{lemma}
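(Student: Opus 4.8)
The plan is to show that \eqref{Fmain} is nothing but \eqref{Deltaiv} rewritten in coordinates adapted to the splitting \eqref{oplus}, after the substitutions $\bm\varphi=\bm\psi=r\bm\eta+\bm w$ and $\nu=(d_*-d)h$ and after inserting the profile \eqref{ula} of $\bm u^d$; no analysis is needed, only an algebraic identity and a projection. Concretely, I would fix $d\in(0,d_*)$, take $\bm\varphi=\bm\psi=r\bm\eta+\bm w$ with $r\ge0$ and $\bm w\in(X_1)_{\mathbb C}$ (possible up to a scalar factor since, complexifying \eqref{oplus}, $\mathbb C^n=(\operatorname{span}\{\bm\eta\})_{\mathbb C}\oplus(X_1)_{\mathbb C}$), and set $\nu=(d_*-d)h$. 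Multiplying the $j$-th component of \eqref{Deltaiv} by $d_*$, splitting $d_*m_j\psi_j=d\,m_j\psi_j+(d_*-d)m_j\psi_j$, using $d_*A\bm\eta+\operatorname{diag}(m_j)\bm\eta=\bm 0$ to annihilate the $r\bm\eta$-contribution inside $d\bigl(d_*\sum_k\alpha_{jk}\psi_k+m_j\psi_j\bigr)$, using \eqref{ula} to replace $u_j^d$ by $\beta^d(d_*-d)[\eta_j+(d_*-d)\xi^d_j]$, and using the definition \eqref{qi} to replace $f_j(u^d_j,u^d_j)-m_j$ by $(d_*-d)q_j(d,\beta^d,\bm\xi^d)$, the rescaled $j$-th equation becomes $L_j+R_j=0$, where $L_j:=d\bigl(d_*\sum_k\alpha_{jk}w_k+m_j w_j\bigr)$ and $R_j$ is exactly the sum of the third and fourth groups of terms in the definition of $F_{1,j}$ in \eqref{F}; thus, reading off \eqref{F}, $F_{1,j}=L_j+R_j-(d_*-d)F_2$.

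Next I would record two elementary facts. Since $\bm\va$ is the left eigenvector in \eqref{bmeta} and $\bm w\in(X_1)_{\mathbb C}$, one has $\sum_{j}\va_j L_j=d\sum_k w_k\bigl[(d_*A^T+\operatorname{diag}(m_j))\bm\va\bigr]_k=0$; and comparing with \eqref{F}, $(d_*-d)F_2=\sum_j\va_j R_j$. Pairing $L_j+R_j=0$ with $\va_j$ and summing therefore gives $(d_*-d)F_2=0$, hence $F_2=0$ because $d\neq d_*$, hence $F_{1,j}=(L_j+R_j)-(d_*-d)F_2=0$; conversely, if $F_2=0$ and $F_{1,j}=0$ then $L_j+R_j=F_{1,j}+(d_*-d)F_2=0$. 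So $\{L_j+R_j=0:j=1,\dots,n\}$ is equivalent to $\{F_2=0\}\cup\{F_{1,j}=0:j=1,\dots,n\}$. The same identities give $\sum_j\va_j F_{1,j}=0$, i.e. $\bm F_1\in(X_1)_{\mathbb C}$, confirming that $\bm F$ maps into $(X_1)_{\mathbb C}\times\mathbb C\times\mathbb R$.

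To finish I would observe that $F_3=0$ is, by \eqref{F} and $\|\bm\psi\|_2^2=r^2\|\bm\eta\|_2^2+r\sum_j\eta_j(w_j+\overline w_j)+\|\bm w\|_2^2$, exactly the normalization in \eqref{psi} (which in particular forces $\bm\psi\neq\bm 0$), and that $\nu=(d_*-d)h>0$ is equivalent to $h>0$ since $d<d_*$. Putting it together: $(\nu,\theta,\bm\psi)$ solves \eqref{Deltaiv} with $\nu=(d_*-d)h>0$ and $\bm\psi$ as in \eqref{psi} if and only if $\{L_j+R_j=0:j=1,\dots,n\}$ holds together with $F_3=0$, which by the previous paragraph holds if and only if $(\bm w,r,h,\theta)$ solves \eqref{Fmain}. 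Continuous differentiability of $\bm F$ follows from $\bf(H1)$ (so $f_j\in C^4$, whence $q_j$ and $a^d_j,b^d_j$ depend smoothly on their arguments, $q_j$ extending in a $C^1$ way up to $d=d_*$ by \eqref{qi}) together with the $C^1$-dependence of $(\beta^d,\bm\xi^d)$ and of $\bm u^d$ on $d$ furnished by Lemma \ref{asymp}.

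The computation is essentially bookkeeping, so the one delicate point — and the only place an error could hide — is the exact matching of $d_*\cdot$\eqref{Deltaiv} with $F_{1,j}+(d_*-d)F_2$: one must recognize that $F_2$ is precisely the $\operatorname{span}\{\bm\va\}$-component of the rescaled system (morally $F_2=\mu/(d_*-d)$ at a genuine solution, compare \eqref{H1}), so that subtracting $(d_*-d)F_2$ is exactly the projection of the $n$ scalar equations onto $(X_1)_{\mathbb C}$. Once that identity is verified, both directions of the equivalence are immediate.
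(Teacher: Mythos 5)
Your proposal is correct and follows essentially the same route as the paper: multiply \eqref{Deltaiv} by $d_*$, substitute \eqref{ula}, \eqref{psi} and $\nu=(d_*-d)h$ to obtain the rescaled system $\bm y=\bm 0$, and then split $\bm y$ into its $(X_1)_{\mathbb C}$-part ($\bm F_1$) and complementary part ($(d_*-d)F_2$), with $F_3=0$ encoding the normalization. The only cosmetic difference is that you realize the splitting via the projection $\langle\bm\va,\cdot\rangle$, whereas the paper writes it as the decomposition $\mathbb C^n=\operatorname{span}\{\bm\rho\}\oplus(X_1)_{\mathbb C}$ with $\bm\rho=(1,\dots,1)^T$; these are the same identity.
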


\begin{proof}
Multiplying \eqref{Deltaiv} by $d_*$, we have
\begin{equation}\label{signlep21}
\begin{split}
&0=d\left[d_*\ds\sum_{k=1}^n \alpha_{jk}\varphi_{k}+m_j\varphi_{j}\right]+(d_*-d)m_j\varphi_{j}+d_*\left[f_j (u^d_{j},u^d_{j})-m_j\right]\varphi_{j}\\
&~~~~+d_*u^d_{j}a_{j}^{d} \varphi_{j}+d_*u^d_{j} b_{j}^{d}\varphi_{j}e^{-{\rm i}\theta} -{\rm i}d_*\nu\varphi_{j},\;\;\;\;\;\;\;j=1,\cdots,n.
\end{split}
\end{equation}
Then plugging \eqref{ula},  the first equation of \eqref{psi}, and $\nu=(d_*-d)h$ into \eqref{signlep21}, we have
$\bm y=(y_1,\cdots,y_n)^T=\bm 0$, where
\begin{equation}\label{Arla1111}
\begin{aligned}
&y_j:=\ds d \left(d_*\sum_{k=1}^n \alpha_{jk}w_{k}+ m_j w_{j}\right)\\
&~~~~~~+(d_*-d)\left[m_j+d_*q_j(d,\beta^d,\bm\xi^d)-{\rm i}d_*h\right](r \eta_j+ w_{j})\\
&~~~~~~+d_* (d_*-d)\beta^{d}\left[\eta_j+(d_*-d)\xi^d_j\right](a_j^{d}+b_j^{d}e^{-{\rm i}\theta})(r \eta_j+ w_{j}).
\end{aligned}
\end{equation}
Since $$\mathbb C^n=\operatorname{span}\{{\bm \rho}\} \oplus \left({X}_{1}\right)_{\mathbb C}\;\;\text{with}\;\;\bm \rho=(1,\cdots,1)^T,$$
we see that $$ \bm y=(d_*-d)F_2(\bm w,r,h,\theta, d)\bm \rho+\left(F_{1,1}(\bm w,r, h,\theta, d),\cdots,F_{1,n}(\bm w,r, h,\theta, d)\right)^T.$$
Therefore, $\bm y=\bm0$ if and only if $F_2(\bm w,r,h,\theta, d)=0$ and $F_{1,j}(\bm w,r, h,\theta, d)=0$ for all $j=1,\cdots,n$. Clearly, the second equation of \eqref{psi} is equivalent to $F_{3}(\bm w,r, h,\theta, d)=0$. This completes the proof.
\end{proof}

We first show that $\bm F(\bm w,r,h,\theta,d)=\bm 0$ has a unique solution for $d=d_*$.

\begin{lemma}\label{bmF0}
Assume that $\bf(H0)$-$\bf(H1)$ hold, and $\tilde a-\tilde b>0$, where $\tilde a$ and $\tilde b$ are defined in \eqref{tilder1r2}.
Then the following equation
\begin{equation}
\left\{\begin{array}{l}
\displaystyle {\bm F(\bm w,r, h, \theta,d_*)=\bm 0} \\
\displaystyle {\bm w \in (X_1)_{\mathbb C},\;r\ge 0,\;h \geq 0, \;\theta \in[0,2 \pi]}
\end{array}\right.
\end{equation}
has a unique solution $\left(\bm w_{d_*},r_{d_*}, h_{d_*},\theta_{d_*}\right)$, where
\begin{equation}\label{psi0theta0nu0}
\begin{aligned}
\bm w_{d_*}=\bm 0,\;\; r_{d_*}=1,\;\;h_{d_*}=\frac{ \beta^{d_*}\sqrt{{\tilde b}^2-{\tilde a}^2}}{\sum_{j=1}^n \eta_j \va_j},\;\;\theta_{d_*}=\arccos \left(-\tilde a / \tilde b\right),
\end{aligned}
\end{equation}
and $\beta^{d_*}$ is defined in \eqref{beta*}.
\end{lemma}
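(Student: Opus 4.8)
The plan is to substitute $d=d_*$ directly into the three blocks of $\bm F$ in \eqref{F} and exploit the fact that every term carrying the factor $(d_*-d)$ drops out, collapsing the system to something that can be solved one component at a time, in the order $F_{1,j}\Rightarrow F_3\Rightarrow F_2$.

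First I would examine $F_{1,j}(\bm w,r,h,\theta,d_*)$. Setting $d=d_*$ annihilates all but the first term, so the system $F_{1,j}=0$ for $j=1,\dots,n$ is precisely $\left(d_*A+\operatorname{diag}(m_j)\right)\bm w=\bm 0$. By the Perron–Frobenius theorem the eigenvalue $0$ of $d_*A+\operatorname{diag}(m_j)$ is simple, so its (complex) kernel is $\operatorname{span}_{\mathbb C}\{\bm\eta\}$; since $\bm w\in(X_1)_{\mathbb C}$ and $\operatorname{span}_{\mathbb C}\{\bm\eta\}\cap(X_1)_{\mathbb C}=\{\bm 0\}$ by \eqref{oplus}, this forces $\bm w_{d_*}=\bm 0$. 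Plugging $\bm w=\bm 0$ into $F_3$ in \eqref{F} leaves $(r^2-1)\|\bm\eta\|_2^2=0$, and the constraint $r\ge0$ gives $r_{d_*}=1$.

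Next, with $\bm w=\bm 0$ and $r=1$ in hand, I would evaluate $F_2(\bm 0,1,h,\theta,d_*)$. Using $q_j(d_*,\beta^{d_*},\bm\xi^{d_*})=\beta^{d_*}(a_j+b_j)\eta_j$ from \eqref{qi}, the identities $a_j^{d_*}=a_j$, $b_j^{d_*}=b_j$ (since $\bm u^{d_*}=\bm 0$ by \eqref{ula} and \eqref{abd}), and the definitions \eqref{tilder1r2} of $\tilde a,\tilde b$, the sum collapses to
\begin{equation*}
F_2=\left(\sum_{j=1}^{n} m_j\eta_j\va_j+d_*\beta^{d_*}(\tilde a+\tilde b)\right)+d_*\beta^{d_*}\left(\tilde a+\tilde b\, e^{-{\rm i}\theta}\right)-{\rm i}\,d_*h\sum_{j=1}^{n}\eta_j\va_j .
\end{equation*}
Formula \eqref{beta*} for $\beta^{d_*}$ makes the first bracket vanish identically, so $F_2=0$ reduces to $d_*\beta^{d_*}\left(\tilde a+\tilde b e^{-{\rm i}\theta}\right)={\rm i}\,d_*h\sum_j\eta_j\va_j$; separating real and imaginary parts yields $\tilde a+\tilde b\cos\theta=0$ and $h\sum_j\eta_j\va_j=-\beta^{d_*}\tilde b\sin\theta$.

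Finally I would solve this pair under the sign constraints. From \eqref{tildeab<0n} we have $\tilde a+\tilde b<0$, which together with the hypothesis $\tilde a-\tilde b>0$ gives $\tilde b<0$, $\tilde b^2-\tilde a^2=(\tilde b-\tilde a)(\tilde b+\tilde a)>0$, and $-1<-\tilde a/\tilde b<1$. Thus $\cos\theta=-\tilde a/\tilde b$ has exactly two roots in $[0,2\pi]$, one in $(0,\pi)$ and its reflection in $(\pi,2\pi)$; the constraint $h\ge0$ (recall $\beta^{d_*}>0$, $\sum_j\eta_j\va_j>0$, $\tilde b<0$) forces $\sin\theta\ge0$, eliminating the root in $(\pi,2\pi)$ and giving the unique $\theta_{d_*}=\arccos(-\tilde a/\tilde b)$. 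Substituting $\sin\theta_{d_*}=\sqrt{\tilde b^2-\tilde a^2}/(-\tilde b)$ into the second equation gives $h_{d_*}=\beta^{d_*}\sqrt{\tilde b^2-\tilde a^2}/\sum_j\eta_j\va_j>0$, matching \eqref{psi0theta0nu0}. There is no real obstacle here; the only point requiring care is verifying that the $h\ge0$ condition selects a single branch of $\arccos$ and that $\tilde b^2-\tilde a^2>0$, both of which follow from the sign information on $\tilde a\pm\tilde b$.
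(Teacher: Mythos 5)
Your proposal is correct and follows essentially the same route as the paper's proof: at $d=d_*$ the system decouples into $\bigl(d_*A+\operatorname{diag}(m_j)\bigr)\bm w=\bm 0$ on $(X_1)_{\mathbb C}$ forcing $\bm w=\bm 0$, then $F_3$ gives $r=1$, and $F_2=0$ reduces to $\tilde a+\tilde b\cos\theta=0$, $\beta^{d_*}\tilde b\sin\theta+h\sum_j\eta_j\va_j=0$, solved using $\tilde a+\tilde b<0$ and $\tilde a-\tilde b>0$. Your explicit justification that the kernel of $d_*A+\operatorname{diag}(m_j)$ meets $(X_1)_{\mathbb C}$ trivially, and that the constraint $h\ge 0$ selects the root of $\cos\theta=-\tilde a/\tilde b$ in $(0,\pi)$, fills in two small details the paper leaves implicit.
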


\begin{proof}
Set $\bm F_1=(F_{1,1}, \cdots, F_{1,n})^T$, and $\bm F_1(\bm w,r,h, \theta,d_*)=\bm 0$ if and only if $\bm w=\bm w_{d_*}=\bm 0$. This, together with $F_3(\bm w,r,h,\theta,d_*)= 0$, implies $r=r_{d_*}=1$.
Note from \eqref{ula} and \eqref{abd} that
$a_j^d=a_j$ and $b_j^d=b_j$ for $d=d_*$, where $a_j$ and $b_j$ are defined in \eqref{r1r2}.
Then, substituting $\bm w=\bm w_{d_*}$ and $r=r_{d_*}$ into $F_2(\bm w,r,h,\theta,d_*)= 0$, we see from \eqref{tilder1r2} and \eqref{beta*} that
\begin{equation}\label{F20}
d_* \beta^{d_*} \left(\tilde a+\tilde b e^{-{\rm i}\theta} \right) -{\rm i}d_*h \sum_{j=1}^n \eta_j \va_j=0,
\end{equation}
which implies that
\begin{equation}\label{sed}
\begin{cases}
\tilde a +\tilde b \cos \theta=0, \\
 \beta^{d_*} \tilde b \sin \theta +h\sum_{j=1}^n \eta_j \va_j=0.
\end{cases}
\end{equation}
It follows from $\bf(H1)$ (see also \eqref{tildeab<0n}) that $\tilde a+\tilde b<0$. Then if
 $\tilde a-\tilde b>0$, we have
\begin{equation}\label{tildeabab}
\tilde b<\min \left\{\tilde a,-\tilde a\right\} \leq 0 \text { and }-1<-\tilde a/ \tilde b<1.
\end{equation}
This, combined with \eqref{sed}, yields
\begin{equation*}
\begin{aligned}
\theta=\theta_{d_*}=\arccos \left(-\tilde a/ \tilde b\right),\;\;h=h_{d_*}=\frac{ \beta^{d_*}\sqrt{{\tilde b}^2-{\tilde a}^2}}{\sum_{j=1}^n \eta_j \va_j}.
\end{aligned}
\end{equation*}
This completes the proof.
\end{proof}

Then we solve $\bm F(\bm w,r,h,\theta,d)= \bm 0$ for  $0<d_*-d\ll 1$.

\begin{theorem}\label{uniquesolution}
Assume that $\bf(H0)$-$\bf(H1)$ hold, and $\tilde a-\tilde b>0$, where $\tilde a$ and $\tilde b$ are defined in \eqref{tilder1r2}. Then there exists $\tilde d_2$ $(0<d_*-\tilde d_2\ll 1)$ and a continuously differentiable mapping $d \mapsto (\bm w_{d}, r_d,h_{d},\theta_{d})$ from $[\tilde d_2, d_*]$ to $ (X_1)_{\mathbb C}  \times \mathbb{R}^3$ such that  $(\bm w_{d},r_d, h_{d},\theta_{d})$ is the unique solution of the following problem
\begin{equation}\label{Fla=0}
\begin{cases}
\bm F(\bm w,r, h, \theta,d)=\bm 0 \\
\bm w \in (X_1)_{\mathbb C},\;r\ge 0,\;h > 0, \;\theta \in[0,2 \pi)
\end{cases}
\end{equation}
for $d \in [\tilde d_2, d_*)$.
\end{theorem}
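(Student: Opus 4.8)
The plan is to solve \eqref{Fla=0} near $d=d_*$ by the implicit function theorem, taking the base point from Lemma \ref{bmF0}, and then to upgrade the local uniqueness furnished by the implicit function theorem to uniqueness among \emph{all} admissible solutions by invoking the a priori estimates of Lemma \ref{bounded1}.

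First I would linearize $\bm F$ in $(\bm w,r,h,\theta)$ at $(\bm w_{d_*},r_{d_*},h_{d_*},\theta_{d_*},d_*)=(\bm 0,1,h_{d_*},\theta_{d_*},d_*)$, obtaining a bounded operator $L:(X_1)_{\mathbb C}\times\mathbb R^3\to(X_1)_{\mathbb C}\times\mathbb C\times\mathbb R$. Since every term of $F_{1,j}$ and $F_2$ carrying the factor $d_*-d$ is differentiated at $d=d_*$, where that factor vanishes, $L$ is triangular: on the $(X_1)_{\mathbb C}$ component it is $\bm v\mapsto d_*[d_*A+\operatorname{diag}(m_j)]\bm v$, which is an isomorphism of $(X_1)_{\mathbb C}$ because $0$ is a simple eigenvalue of $d_*A+\operatorname{diag}(m_j)$ with eigenvector $\bm\eta\notin X_1$ and $X_1$ is the corresponding invariant range; the derivative of $F_3$ in $r$ is $2\|\bm\eta\|_2^2\neq0$ and in $(h,\theta)$ is $0$; the derivative of $F_2$ in $r$ is $0$ (here one uses \eqref{beta*} together with \eqref{F20}), while its derivatives in $h$ and $\theta$ are $-{\rm i}d_*\sum_{j=1}^n\va_j\eta_j$ and $-{\rm i}d_*\beta^{d_*}\tilde b\,e^{-{\rm i}\theta_{d_*}}$. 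To see that $L$ is injective, solve $L[\bm v,\epsilon,h,\theta]=\bm 0$ in cascade: the $(X_1)_{\mathbb C}$ equation forces $\bm v=\bm 0$, the $\mathbb R$ equation then forces $\epsilon=0$, and the remaining $\mathbb C$-valued equation becomes $h\,c_1+\theta\,c_2=0$ with $c_1=-{\rm i}d_*\sum_j\va_j\eta_j$ and $c_2=-{\rm i}d_*\beta^{d_*}\tilde b\,e^{-{\rm i}\theta_{d_*}}$; now $c_1$ is purely imaginary and nonzero, while $\mathcal{R}e\,c_2=-d_*\beta^{d_*}\tilde b\sin\theta_{d_*}>0$ because $\tilde b<0$ by \eqref{tildeab<0n} and $\theta_{d_*}=\arccos(-\tilde a/\tilde b)\in(0,\pi)$ by \eqref{tildeabab}, so $c_1,c_2$ are $\mathbb R$-linearly independent and $h=\theta=0$. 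As the domain and codomain of $L$ have the same finite real dimension $2n+1$, $L$ is an isomorphism.

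Granting this, the implicit function theorem (applicable since $\bm F$ is $C^1$) produces $\tilde d_2<d_*$, a neighbourhood $\mathcal U$ of $(\bm 0,1,h_{d_*},\theta_{d_*})$ in $(X_1)_{\mathbb C}\times\mathbb R^3$, and a continuously differentiable map $d\mapsto(\bm w_d,r_d,h_d,\theta_d)$ on $[\tilde d_2,d_*]$ into $\mathcal U$, passing through the Lemma \ref{bmF0} solution at $d=d_*$, solving $\bm F=\bm 0$ and unique in $\mathcal U$. Shrinking $d_*-\tilde d_2$ and using $h_{d_*}>0$, $r_{d_*}=1$, $\theta_{d_*}\in(0,\pi)$ and continuity, one gets $h_d>0$, $r_d\ge0$, $\theta_d\in[0,2\pi)$, so $(\bm w_d,r_d,h_d,\theta_d)$ solves \eqref{Fla=0}.

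The hard part is to promote this to uniqueness among \emph{all} solutions of \eqref{Fla=0} for each fixed $d\in[\tilde d_2,d_*)$, and this is exactly where Lemma \ref{bounded1} enters. Suppose not; then there are $d_l\to d_*$ and solutions $(\bm w_{d_l},r_{d_l},h_{d_l},\theta_{d_l})$ of \eqref{Fla=0} outside $\mathcal U$. Undoing the equivalence between \eqref{Deltaiv} and \eqref{Fmain}, each yields a solution $({\rm i}\nu_{d_l},\tau_{d_l},\bm\psi_{d_l})$ of \eqref{2triangle} with $\nu_{d_l}=(d_*-d_l)h_{d_l}>0$, $\tau_{d_l}=\theta_{d_l}/\nu_{d_l}\ge0$, $\mathcal{R}e({\rm i}\nu_{d_l})=0\ge0$, and $\bm\psi_{d_l}$ normalized as in \eqref{psi}; Lemma \ref{bounded1} then gives that $\{h_{d_l}\}=\{|{\rm i}\nu_{d_l}/(d_*-d_l)|\}$ is bounded and that $r_{d_l}\to1$, $\bm w_{d_l}\to\bm 0$. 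Since $\{\theta_{d_l}\}\subset[0,2\pi]$ is also bounded, a subsequence has $h_{d_l}\to h^*\ge0$, $\theta_{d_l}\to\theta^*\in[0,2\pi]$, and passing to the limit in $\bm F(\bm w_{d_l},r_{d_l},h_{d_l},\theta_{d_l},d_l)=\bm 0$ shows that $(\bm 0,1,h^*,\theta^*)$ solves $\bm F(\cdot,d_*)=\bm 0$; by the uniqueness in Lemma \ref{bmF0}, $(h^*,\theta^*)=(h_{d_*},\theta_{d_*})$. Hence $(\bm w_{d_l},r_{d_l},h_{d_l},\theta_{d_l})\in\mathcal U$ for large $l$, contradicting the choice of the sequence; replacing $\tilde d_2$ by a value closer to $d_*$ if necessary finishes the proof. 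Everything outside this last compactness step is routine; the a priori bound on $h_d$ coming from Lemma \ref{bounded1} is the one genuinely problem-specific ingredient.
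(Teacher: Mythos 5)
Your proposal is correct and follows essentially the same route as the paper: compute the Fr\'echet derivative at the $d=d_*$ solution from Lemma \ref{bmF0}, verify injectivity (hence bijectivity, by the dimension count) via the same cascade through the $(X_1)_{\mathbb C}$, $F_3$, and $F_2$ components, apply the implicit function theorem, and then upgrade local to global uniqueness by combining the boundedness of $h^d$ from Lemma \ref{bounded1} with the uniqueness at $d=d_*$ from Lemma \ref{bmF0}. Your explicit verification that $c_1$ and $c_2$ are $\mathbb R$-linearly independent (via $\mathcal{R}e\,c_2=-d_*\beta^{d_*}\tilde b\sin\theta_{d_*}>0$) merely spells out a step the paper leaves implicit.
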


\begin{proof}
Let $\bm T(\bm \chi,\kappa, \epsilon,\vartheta)=(T_{1,1},\cdots,T_{1,n},T_2,T_3)^T: (X_1)_{\mathbb C} \times \mathbb{R}^3 \mapsto (X_1)_{\mathbb C} \times \mathbb{C} \times \mathbb{R}$ be the Fr\'{e}chet derivative of $\bm F(\bm w,r,h,\theta,d)$ with respect to $(\bm w,r, h, \theta)$ at $(\bm w_{d_*}, r_{d_*}, h_{d_*}, \theta_{d_*},{d_*}) $. A direct computation yields
\begin{equation*}
\begin{split}
T_{1j}(\bm \chi,\kappa, \epsilon,\vartheta)=&d_*\left(d_*\sum_{k=1}^{n} \alpha_{jk} \chi_{k}+ m_j\chi_{j}\right),\;\;j=1,\cdots,n, \\
T_{2}(\bm \chi,\kappa, \epsilon,\vartheta)=&\sum_{j=1}^n\va_j(\kappa\eta_j+\chi_j) \left\{m_j+d_*\beta^{d_*}(a_j+b_j)\eta_j +d_*\beta^{d_*}(a_j +b_j e^{-{\rm i}\theta_{d_*}})\eta_j-{\rm i}d_*h_{d_*}\right\} \\
&-{\rm i}\epsilon d_*\sum_{j=1}^n\va_j\eta_j-{\rm i}\vartheta d_*\beta^{d_*} \tilde b e^{-{\rm i}\theta_{d_*}}, \\
T_{3}(\bm \chi,\kappa, \epsilon,\vartheta)=&\sum_{j=1}^{n}\eta_j(\chi_j+\overline \chi_j)+2\kappa\|\bm \eta\|_2^2,
\end{split}
\end{equation*}
where we have used \eqref{tilder1r2} and \eqref{qi} to obtain $T_{2}$.

Now, we show that $\bm T$ is a bijection, and only need to show that $\bm T$ is an injective mapping. By \eqref{bmeta}-\eqref{X1}, we see that $d_*A+\text{diag}{(m_j)}$ is a bijection from $(X_1)_{\mathbb C}$ to $(X_1)_{\mathbb C}$. Then if $ T_{1j}(\bm \chi,\kappa, \epsilon,\vartheta)=\bm 0$ for all $j=1,\cdots,n$, we have $\bm \chi= \bm 0$. Substituting $\bm \chi=\bm 0$ into $T_{3}(\bm \chi,\kappa, \epsilon,\vartheta)=0$, we have $\kappa=0$. Then plugging $\bm \chi=\bm 0$ and $\kappa=0 $ into  $\bm T_{2}(\bm \chi,\kappa, \epsilon,\vartheta)=\bm 0$, we see from \eqref{psi0theta0nu0} that $\epsilon=\vartheta=0$. Therefore, $\bm T$ is an injection.
It follows from the implicit function theorem that there exists $\tilde d_2 \in [d_2, d_*)$ and a continuously differentiable mapping $d \mapsto(\bm w_{d},r_d, h_{d}, \theta_{d})$ from $[\tilde d_2, d_*]$ to $(X_1)_{\mathbb C} \times \mathbb{R}^3$ such that $(\bm w_{d},r_d, h_{d}, \theta_{d})$ satisfies \eqref{Fla=0}.

Then we prove the uniqueness of the solution of \eqref{Fla=0}. Actually, we only need to verify that if $(\bm w^{d},r^d, h^{d}, \theta^{d})$ satisfies \eqref{Fla=0}, then
$\displaystyle \left(\bm w^{d},r^d, h^{d},\theta^{d}\right) \rightarrow \left(\bm w_{d_*}, r_{d_*},h_{d_*},\theta_{d_*}\right)=\left(\bm 0,1,h_{d_*},\theta_{d_*}\right)$ as $d \rightarrow d_*.$
It follows from Lemma \ref{bounded1} that $h^d$ is bounded for $d\in[\tilde d _2, {d_*})$. Then, up to a subsequence, we can assume that $\lim_{d\to d_*}\theta^{d}=\theta^{d_*}$ and $\lim_{d\to d_*}h^{d}=h^{d_*}$.
It follows from Lemma \ref{bounded1} that
$\lim_{d\to d_*}r^{d}=r_{d_*}=1,$ $\lim_{d\to d_*}\bm w^{d}=\bm w_{d_*}=\bm 0.$
Taking the limits of $\bm F(\bm w^{d},r^{d}, h^{d},\theta^{d}, d)=\bm 0$
as $d\to d_*$, we have
\begin{equation*}
\bm F(\bm w_{d_*},r_{d_*}, h^{d_*},\theta^{d_*}, d_*)=\bm 0.
\end{equation*}
This, combined with Lemma \ref{bmF0}, implies that $\theta^{d_*}=\theta_{d_*}$ and $h^{d_*}=h_{d_*}$,
Therefore, $\left(\bm w^{d},r^d, h^{d},\theta^{d}\right) \rightarrow \left(\bm w_{d_*}, r_{d_*},h_{d_*},\theta_{d_*}\right)$ as $d\rightarrow {d_*} $.
This completes the proof.
\end{proof}

By Theorem \ref{uniquesolution}, we obtain the following result.
\begin{theorem}\label{solutionvtaupsi}
Assume that $\bf(H0)$-$\bf(H1)$ hold, and $\tilde a-\tilde b>0$, where $\tilde a$ and $\tilde b$ are defined in \eqref{tilder1r2}. Then for each $d\in[\tilde d_2, d_*)$, where $0<d_*-\tilde d_2\ll 1$, the following equation
$$
\left\{\begin{array}{l}
{\Delta(d, {\rm i} \nu, \tau)\bm \psi=\bm 0} \\
{\nu >0,\; \tau \geq 0,\; \bm \psi(\neq \bm 0) \in \mathbb{C}^{n}}
\end{array}\right.
$$
has a solution $(\nu,\tau, \bm \psi)$, if and only if
\begin{equation}\label{taunupsi}
\nu = \nu_{d}=(d_*-d)h_{d},\;\bm \psi=c \bm\psi_{d},\; \tau=\tau_{d,l}=\frac{\theta_d+2 l \pi}{\nu_d},\;\; l=0,1,2, \cdots,
\end{equation}
where $\bm \psi_{d}=r_{d}\bm \eta+\bm w_{d}$, $c$ is a nonzero constant, and $\bm w_{d},r_d, \theta_{d}, h_{d}$ are defined in Theorem \ref{uniquesolution}.
\end{theorem}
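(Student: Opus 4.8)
The plan is to treat Theorem~\ref{solutionvtaupsi} as a dictionary between the eigenvalue equation $\Delta(d,\mathrm{i}\nu,\tau)\bm\psi=\bm 0$ and the reduced system \eqref{Fla=0} already solved in Theorem~\ref{uniquesolution}; almost everything is inherited from that theorem and from the equivalence lemma relating \eqref{Deltaiv} and \eqref{Fmain}. The one structural observation to make first is that, after putting $\mu=\mathrm{i}\nu$, the delay term $e^{-\mu\tau}=e^{-\mathrm{i}\nu\tau}$ depends on $\tau$ only through $\nu\tau\pmod{2\pi}$: comparing \eqref{2triangle} with \eqref{Deltaiv}, one has $\Delta(d,\mathrm{i}\nu,\tau)\bm\varphi=\bm H(d,\nu,\theta,\bm\varphi)$ whenever $\theta\in[0,2\pi)$ satisfies $\theta\equiv\nu\tau\pmod{2\pi}$. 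Thus solving $\Delta(d,\mathrm{i}\nu,\tau)\bm\psi=\bm 0$ with $\nu>0$, $\tau\ge0$ is the same as solving \eqref{Deltaiv} for $(\nu,\theta,\bm\psi)$ together with a bookkeeping of the preimages $\tau$ of $\theta$ under $\tau\mapsto\nu\tau\bmod 2\pi$.

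For the ``only if'' direction I would start from a solution $(\nu,\tau,\bm\psi)$ of $\Delta(d,\mathrm{i}\nu,\tau)\bm\psi=\bm 0$ with $\nu>0$, $\tau\ge0$, $\bm\psi\ne\bm 0$. Setting $h:=\nu/(d_*-d)>0$ and letting $\theta\in[0,2\pi)$ be the residue of $\nu\tau$, the observation above gives $\bm H(d,\nu,\theta,\bm\psi)=\bm 0$. Using the complexified decomposition \eqref{oplus} and multiplying $\bm\psi$ by an appropriate nonzero complex scalar (a phase to make the $\mathrm{span}\{\bm\eta\}$-component $r\ge0$, then a positive factor to enforce $\|\bm\psi\|_2=\|\bm\eta\|_2$), I may assume $\bm\psi$ is normalized as in \eqref{psi}, producing data $(\bm w,r)$. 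By the equivalence lemma preceding Theorem~\ref{uniquesolution}, the quadruple $(\bm w,r,h,\theta)$ then solves \eqref{Fmain}, which, because $h>0$, is exactly \eqref{Fla=0}. The uniqueness part of Theorem~\ref{uniquesolution} forces $(\bm w,r,h,\theta)=(\bm w_d,r_d,h_d,\theta_d)$; hence $\nu=(d_*-d)h_d=\nu_d$, $\bm\psi$ is a nonzero multiple $c\bm\psi_d$ of $\bm\psi_d=r_d\bm\eta+\bm w_d$, and from $\nu_d\tau=\theta_d+2l\pi$ with $\tau\ge0$ and $\theta_d\in[0,2\pi)$ we get $\tau=\tau_{d,l}$ for some $l\in\{0,1,2,\dots\}$.

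For the ``if'' direction I would run the same correspondence backwards. By Theorem~\ref{uniquesolution} the quadruple $(\bm w_d,r_d,h_d,\theta_d)$ solves \eqref{Fla=0}, hence \eqref{Fmain}, so the equivalence lemma yields $\bm H(d,\nu_d,\theta_d,\bm\psi_d)=\bm 0$ with $\nu_d=(d_*-d)h_d$ and $\bm\psi_d=r_d\bm\eta+\bm w_d$; by linearity of $\bm H$ in its last argument the same holds for $c\bm\psi_d$ with any constant $c$. Since $\nu_d\tau_{d,l}=\theta_d+2l\pi$ gives $e^{-\mathrm{i}\nu_d\tau_{d,l}}=e^{-\mathrm{i}\theta_d}$, the structural observation gives $\Delta(d,\mathrm{i}\nu_d,\tau_{d,l})(c\bm\psi_d)=\bm H(d,\nu_d,\theta_d,c\bm\psi_d)=\bm 0$, while $\nu_d>0$, $\tau_{d,l}\ge0$, and $c\bm\psi_d\ne\bm 0$ when $c\ne0$, so $(\nu_d,\tau_{d,l},c\bm\psi_d)$ is indeed a solution.

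I do not expect a serious obstacle here; the argument is essentially bookkeeping. The one point needing a little care is the normalization in the ``only if'' direction---verifying that an arbitrary nonzero solution $\bm\psi$ can be brought into the exact form \eqref{psi} by a single complex scalar---and the mod-$2\pi$ identification of $\theta$ with $\nu\tau$, since this is precisely what converts the single solution $\theta_d$ of \eqref{Fla=0} into the discrete family $\{\tau_{d,l}\}_{l\ge0}$. Existence, uniqueness, the positivity $h_d>0$, and the smooth dependence on $d$ are all supplied by Theorem~\ref{uniquesolution} and the equivalence lemma, so nothing new has to be estimated.
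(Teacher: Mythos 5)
Your proposal is correct and is exactly the argument the paper has in mind: the paper states this theorem without proof as an immediate corollary of Theorem~\ref{uniquesolution} together with the equivalence lemma relating \eqref{Deltaiv} to \eqref{Fmain}, and your write-up simply supplies the bookkeeping (the identification $e^{-\mathrm{i}\nu\tau}=e^{-\mathrm{i}\theta}$ with $\theta\equiv\nu\tau\pmod{2\pi}$, the scalar normalization to the form \eqref{psi}, and the uniqueness of $(\bm w_d,r_d,h_d,\theta_d)$) that the authors leave implicit.
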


For further application, we consider the adjoint eigenvalue problem of \eqref{2triangle}.
For $\bm \psi, \widetilde{\bm \psi} \in \mathbb C^n$, we have
\begin{equation*}
\langle\widetilde{\bm \psi}, \Delta(d,{\rm i}\nu_d , \tau_{d,l})\bm \psi\rangle=\langle\widetilde {\Delta}(d,{\rm i}\nu_d, \tau_{d,l}) \widetilde{\bm \psi}, \bm \psi\rangle,
\end{equation*}
where
\begin{equation}\label{tildedelta}
\begin{split}
\widetilde{\Delta}(d,{\rm i}\nu_{d}, \tau_{d,l}) \widetilde {\bm \psi}= &d A^T\widetilde{\bm \psi} + \operatorname{diag} \left(f_j (u^d_{j},u^d_{j})\right)\widetilde{\bm \psi} + \operatorname{diag}\left(u^d_{j} a_{j}^{d}\right)\widetilde{\bm \psi}\\
&+ \operatorname{diag}\left(u^d_{j} b_{j}^{d}\right)\widetilde{\bm \psi} e^{{\rm i}\nu_d \tau_{d,l}}+{\rm i}\nu_d \widetilde{\bm \psi}.
\end{split}
\end{equation}
Here $\widetilde{\Delta}(d, {\rm i}\nu_d, \tau_{d,l})$ is the conjugate transpose matrix of ${\Delta}(d,{\rm i}\nu_d, \tau_{d,l})$. Clearly, $0$ is also an eigenvalue of $\widetilde{\Delta}(d, {\rm i}\nu_d, \tau_{d,l})$.


\begin{proposition}\label{p}
Let $\widetilde{\bm \psi}_{d}$ be the corresponding eigenvector of $\widetilde{\Delta}\left(d, {\rm i} {\nu_d}, {\tau}_{d,l}\right)$ with respect to eigenvalue $0$.
Then, ignoring a scalar factor, $\widetilde{\bm \psi}_{d}$ can be represented as follows:
\begin{equation}\label{psirl1s}
\begin{cases}
\widetilde{\bm \psi}_{d}=\widetilde{r}_{d} \bm \va+ \widetilde{\bm w}_{d},  \;\widetilde{\bm w}_{d} \in\left(\widetilde{X}_{1}\right)_{\mathbb{C}}, \; \widetilde{r}_{d}\geq 0, \\
\|\widetilde{\bm \psi}_d\|_2^2=\|\bm \va\|_2^2,
\end{cases}
\end{equation}
 and satisfies
\begin{equation}\label{va111}
\lim_{d\to d_*} \widetilde{\bm \psi}_{d}=\bm \va,
\end{equation}
where $\bm \va$ is defined in \eqref{bmeta}.
\end{proposition}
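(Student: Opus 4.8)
The proposition is the adjoint counterpart of Lemma~\ref{bounded1}, so the plan is to run the same argument with $A$ replaced by $A^{T}$, $\bm\eta$ by $\bm\va$, $X_1$ by $\widetilde X_1$, and the eigenvalue problem \eqref{2triangle} by its conjugate transpose \eqref{tildedelta}, keeping in mind that now $\mu=\mathrm{i}\nu_d$ is purely imaginary and $\tau=\tau_{d,l}$ is the explicit quantity from Theorem~\ref{solutionvtaupsi}. First, since $0$ is an eigenvalue of $\widetilde{\Delta}(d,\mathrm{i}\nu_d,\tau_{d,l})$ there is a nonzero eigenvector; using the complexified decomposition $\mathbb C^n=\operatorname{span}_{\mathbb C}\{\bm\va\}\oplus(\widetilde X_1)_{\mathbb C}$ coming from \eqref{oplus}, write it as $\widetilde r_d\bm\va+\widetilde{\bm w}_d$ with $\widetilde r_d\in\mathbb C$, $\widetilde{\bm w}_d\in(\widetilde X_1)_{\mathbb C}$. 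Since an eigenvector is defined only up to a nonzero complex scalar, one may first multiply by a unimodular constant to make the component $\widetilde r_d$ real and nonnegative, and then rescale by a positive constant so that $\|\widetilde{\bm\psi}_d\|_2^2=\|\bm\va\|_2^2$; this yields the representation \eqref{psirl1s}.

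It then remains to establish \eqref{va111}. The ingredients, all available from earlier results, are: by Theorem~\ref{uniquesolution} the map $d\mapsto(\theta_d,h_d)$ extends continuously to $d=d_*$, so $h_d$ and $\theta_d$ are bounded on $[\tilde d_2,d_*)$; hence $\nu_d=(d_*-d)h_d\to 0$ as $d\to d_*$, while $\nu_d\tau_{d,l}=\theta_d+2l\pi$ stays \emph{bounded}, so the exponential factor in \eqref{tildedelta} is $e^{\mathrm{i}\nu_d\tau_{d,l}}=e^{\mathrm{i}\theta_d}$ and remains bounded; and by Lemma~\ref{asymp} (see \eqref{ula}) $\bm u^d\to\bm 0$, so $u_j^d\to 0$, $u_j^d a_j^d\to 0$, $u_j^d b_j^d\to 0$ and $f_j(u_j^d,u_j^d)\to f_j(0,0)=m_j$ as $d\to d_*$.

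Now take an arbitrary sequence $d\to d_*$. Because $\|\widetilde{\bm\psi}_d\|_2=\|\bm\va\|_2$ is constant, a subsequence satisfies $\widetilde{\bm\psi}_d\to\widetilde{\bm\psi}^*$ and $e^{\mathrm{i}\theta_d}\to e^{\mathrm{i}\theta^*}$ with $\|\widetilde{\bm\psi}^*\|_2=\|\bm\va\|_2$. Passing to the limit in $\widetilde{\Delta}(d,\mathrm{i}\nu_d,\tau_{d,l})\widetilde{\bm\psi}_d=\bm 0$ kills every term except the first two and gives $(d_*A^{T}+\operatorname{diag}(m_j))\widetilde{\bm\psi}^*=\bm 0$, so $\widetilde{\bm\psi}^*$ is an eigenvector of $d_*A^{T}+\operatorname{diag}(m_j)$ for the eigenvalue $0$. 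Since $0=s(d_*A^{T}+\operatorname{diag}(m_j))$ is a simple eigenvalue by the Perron-Frobenius theorem (see the discussion following Lemma~\ref{GAS}), $\widetilde{\bm\psi}^*=c\bm\va$ for some $c\in\mathbb C$. By uniqueness of the decomposition, $\widetilde r_d\to c$ and $\widetilde{\bm w}_d\to\bm 0$; as each $\widetilde r_d$ is real and nonnegative, $c\ge 0$, and then $\|\widetilde{\bm\psi}^*\|_2=c\|\bm\va\|_2=\|\bm\va\|_2$ forces $c=1$. Thus every subsequential limit equals $\bm\va$, hence $\widetilde{\bm\psi}_d\to\bm\va$ (and $\widetilde r_d\to 1$, $\widetilde{\bm w}_d\to\bm 0$) as $d\to d_*$, which is \eqref{va111}.

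The argument is essentially routine given Lemma~\ref{bounded1} and Theorem~\ref{uniquesolution}; the one point needing care is that although the delay $\tau_{d,l}$ itself blows up as $d\to d_*$, the quantity actually entering \eqref{tildedelta} is $\nu_d\tau_{d,l}=\theta_d+2l\pi$, which stays bounded, so the passage to the limit is legitimate. A second, minor point is the normalization: one must exploit the freedom to multiply the eigenvector by a complex scalar to make $\widetilde r_d\ge 0$ \emph{before} normalizing the norm, otherwise the phase of the subsequential limit is not pinned down and one cannot conclude that it equals $\bm\va$ rather than a unimodular multiple of it.
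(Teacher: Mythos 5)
Your proof is correct and follows essentially the same route as the paper: take a subsequence of the normalized eigenvectors, pass to the limit in \eqref{tildedelta} using $u_j^d\to 0$ and $\nu_d\to 0$ to land on $(d_*A^T+\operatorname{diag}(m_j))\widetilde{\bm\psi}^*=\bm 0$, and invoke simplicity of the eigenvalue $0$ together with the normalization \eqref{psirl1s} to conclude $\widetilde{\bm\psi}^*=\bm\va$. The extra details you supply (boundedness of $\nu_d\tau_{d,l}=\theta_d+2l\pi$, and fixing the phase so $\widetilde r_d\ge 0$ before normalizing) are exactly the points the paper leaves implicit, and they are handled correctly.
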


\begin{proof}
It follows from \eqref{psirl1s} that $\widetilde{\bm \psi}_{d}$ is bounded. Then, up to a subsequence, we can assume that
$\lim_{d\to d_*}\widetilde{\bm \psi}_{d}=\widetilde{\bm \psi}^* $.
Substituting $\widetilde{\bm \psi}=\widetilde{\bm \psi}_{d}$ into \eqref{tildedelta}, and taking $d\to d_*$, we have
\begin{equation}\label{conj}
\left(d_* A^T+ \text{diag}(m_j) \right)\widetilde{\bm \psi}^*=\bm 0,
\end{equation}
Noticing that $(d_* A^T+ \text{diag}(m_j) )\bm \va= \bm 0$, we see from \eqref{psirl1s} and \eqref{conj}
that
$\widetilde{\bm \psi}^*=\bm \va$. This completes the proof.
\end{proof}

For simplicity, we will always assume $d \in [\tilde d_2 ,d_*)$ in the following Theorems \ref{simpleeigenvalue}-\ref{stable}, where $0<d_*-\tilde d_2\ll 1$.
Actually, ˜$\tilde d_2$ may be chosen bigger than the one in Theorem \ref{uniquesolution} since further perturbation arguments are used. Next, we show that ${\rm i}\nu_{d}$ (obtained in Theorem \ref{solutionvtaupsi}) is simple, and the transversality condition holds.

\begin{theorem}\label{simpleeigenvalue}
Assume that $\bf(H0)$-$\bf(H1)$ hold, $\tilde a-\tilde b>0$, and $d \in [\tilde d_2 ,d_*)$, where $0<d_*-\tilde d_2\ll 1$. Then 
$\mu={\rm i} \nu_{d}$ is a simple eigenvalue of $A_{\tau_{d,l}}(d)$ for $l=0,1,2, \cdots$.
\end{theorem}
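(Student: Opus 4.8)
The plan is to verify simplicity by showing that the generalized eigenspace of $A_{\tau_{d,l}}(d)$ at $\mu = {\rm i}\nu_d$ is one-dimensional, which by the standard theory for delay equations (see \cite{Wu1996Theory}) amounts to two things: (1) $\dim\mathscr{N}\big(\Delta(d,{\rm i}\nu_d,\tau_{d,l})\big) = 1$, i.e.\ the algebraic system \eqref{2triangle} has (up to scalar) a unique solution $\bm\psi$; and (2) the transversality-type nondegeneracy $\widetilde{\bm\psi}_d^{\,*}\,\big[\partial_\mu\Delta(d,{\rm i}\nu_d,\tau_{d,l})\big]\bm\psi_d \neq 0$, which rules out a Jordan block. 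For (1), I would argue by contradiction at $d = d_*$ first. If $\bm\varphi \in \mathscr{N}(\Delta(d,{\rm i}\nu_d,\tau_{d,l}))$, decompose $\bm\varphi = r\bm\eta + \bm w$ with $\bm w \in (X_1)_{\mathbb C}$ as in \eqref{psi} (normalizing so that $\|\bm\varphi\|_2 = \|\bm\eta\|_2$); then $(\bm w, r, h_d, \theta_d, d)$ solves the system \eqref{Fmain} by the equivalence established in the Lemma preceding Theorem~\ref{uniquesolution}. But Theorem~\ref{uniquesolution} gives a \emph{unique} solution $(\bm w_d, r_d, h_d, \theta_d)$ of \eqref{Fla=0} for $d$ close to $d_*$, so $\bm\varphi$ is a scalar multiple of $\bm\psi_d = r_d\bm\eta + \bm w_d$. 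This handles geometric simplicity.

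For (2), I would compute $\partial_\mu\Delta(d,\mu,\tau_{d,l})\big|_{\mu={\rm i}\nu_d} = -I - \tau_{d,l}\,e^{-{\rm i}\theta_d}\operatorname{diag}(u_j^d b_j^d)$ directly from \eqref{2triangle}, and pair it on the left with the adjoint eigenvector $\widetilde{\bm\psi}_d = \widetilde{r}_d\bm\va + \widetilde{\bm w}_d$ from Proposition~\ref{p}. The quantity to control is
\begin{equation*}
\Theta_d := \big\langle \widetilde{\bm\psi}_d,\; \big[I + \tau_{d,l}\,e^{-{\rm i}\theta_d}\operatorname{diag}(u_j^d b_j^d)\big]\bm\psi_d\big\rangle.
\end{equation*}
Using $u_j^d = \beta^d(d_*-d)[\eta_j + (d_*-d)\xi_j^d]$ from \eqref{ula}, $\tau_{d,l} = (\theta_d + 2l\pi)/\nu_d$ with $\nu_d = (d_*-d)h_d$, and the limits $r_d, \widetilde r_d \to 1$, $\bm w_d, \widetilde{\bm w}_d \to \bm 0$, $h_d \to h_{d_*}$, $\theta_d \to \theta_{d_*}$ as $d \to d_*$, the leading term of $\Theta_d$ as $d \to d_*$ becomes
\begin{equation*}
\Theta_{d_*} = \sum_{j=1}^n \eta_j\varsigma_j + \frac{\theta_{d_*} + 2l\pi}{h_{d_*}}\,e^{-{\rm i}\theta_{d_*}}\,\beta^{d_*}\tilde b,
\end{equation*}
after substituting \eqref{tilder1r2}. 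Using the defining relations \eqref{psi0theta0nu0}, namely $\cos\theta_{d_*} = -\tilde a/\tilde b$ and $h_{d_*}\sum\eta_j\varsigma_j = \beta^{d_*}\sqrt{\tilde b^2 - \tilde a^2}$, one checks that the real and imaginary parts of $\Theta_{d_*}$ cannot both vanish — indeed $\mathcal{I}m\,\Theta_{d_*} = -\tfrac{\theta_{d_*}+2l\pi}{h_{d_*}}\beta^{d_*}\tilde b\sin\theta_{d_*}$, and since $\tilde b \neq 0$ (as $\tilde a - \tilde b > 0$ and $\tilde a + \tilde b < 0$ force $\tilde b < 0$), $\theta_{d_*} \in (0,\pi)$ gives $\sin\theta_{d_*} > 0$, so $\mathcal{I}m\,\Theta_{d_*} \neq 0$. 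Hence $\Theta_d \neq 0$ for $d$ sufficiently close to $d_*$, possibly after enlarging $\tilde d_2$.

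The main obstacle I anticipate is bookkeeping rather than conceptual: one must be careful that the ``$\partial_\mu\Delta$ paired with left eigenvector is nonzero'' criterion is genuinely the right algebraic condition for the absence of a generalized eigenvector of the \emph{infinitesimal generator} $A_{\tau_{d,l}}(d)$ (not merely of the matrix family), which requires invoking the characteristic-equation framework of \cite{Wu1996Theory} correctly — in particular, that a generalized eigenvector $\bm\Psi_2$ with $(A_{\tau_{d,l}}(d) - {\rm i}\nu_d)\bm\Psi_2 = \bm\Psi_1$ would force $\widetilde{\bm\psi}_d^{\,*}\partial_\mu\Delta\,\bm\psi_d = 0$ via the solvability (Fredholm) condition. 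Once that reduction is in place, the remaining work is the perturbation computation of $\Theta_d$ near $d = d_*$, which is routine given Lemma~\ref{bounded1}, Lemma~\ref{bmF0}, Theorem~\ref{uniquesolution}, and Proposition~\ref{p}.
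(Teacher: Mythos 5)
Your proposal is correct and follows essentially the same route as the paper: the paper establishes the absence of a generalized eigenvector by an explicit computation with $\bm\phi\in\mathscr{N}[A_{\tau_{d,l}}(d)-{\rm i}\nu_d]^2$, which reduces exactly to the nonvanishing of your $\Theta_d$ (the paper's $S_l(d)$ in \eqref{Sn}), and your limit $\Theta_{d_*}$ and the sign argument via $\mathcal{I}m\,\Theta_{d_*}>0$ agree with the paper's evaluation of $\lim_{d\to d_*}S_l(d)$.
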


\begin{proof}
It follows from Theorem \ref{solutionvtaupsi} that $\mathscr{N}\left[A_{\tau_{d,l}}(d)-{\rm i} \nu_{d}\right]=\operatorname{span}\left[e^{{\rm i} \nu_{d} \theta}\bm \psi_d \right]$, where $\theta\in[- \tau_{d,l},0]$ and $\bm \psi_{d}$ is defined in Theorem \ref{solutionvtaupsi}. Then, we show that
$$\mathscr{N}\left[A_{\tau_{d,l}}(d)-{\rm i} \nu_{d}\right]^{2}=\mathscr{N}\left[{A}_{\tau_{d,l}}(d)-{\rm i} \nu_{d}\right].$$
If $\bm \phi \in \mathscr{N}\left[A_{\tau_{d,l}}(d)-{\rm i} \nu_{d}\right]^2$, then
$$
\left[A_{\tau_{d,l}}(d)-{\rm i} \nu_{d}\right]\bm \phi \in \mathscr{N}\left[A_{\tau_{d,l}}(d)-{\rm i} \nu_{d}\right]=\operatorname{span}\left[e^{{\rm i} \nu_{d} \theta}\bm \psi_d \right],
$$
and consequently, there exists a constant $\gamma$ such that
$$
\left[A_{\tau_{d,l}}(d)-{\rm i} \nu_{d}\right]\bm \phi=\gamma e^{{\rm i} \nu_{d} \theta}\bm \psi_d,
$$
which yields
\begin{equation}\label{dotphi}
\begin{aligned}
\dot{\bm\phi}(\theta) &={\rm i} \nu_{d}\bm \phi(\theta)+\gamma e^{{\rm i} \nu_{d} \theta}\bm \psi_d, \quad \theta \in\left[-\tau_{d,l}, 0\right], \\
\dot{\bm\phi}(0)=&dA\bm \phi(0)+ \operatorname{diag} \left(f_j (u^d_{j},u^d_{j})\right)\bm \phi(0) + \operatorname{diag}\left(u^d_{j} a_{j}^{d}\right)\bm\phi(0)\\
&+ \operatorname{diag}\left(u^d_{j} b_{j}^{d}\right)\bm \phi(-\tau_{d,l}).
\end{aligned}
\end{equation}
By the first equation of Eq. \eqref{dotphi}, we obtain that
\begin{equation}\label{phi}
\begin{aligned} \bm\phi(\theta) &=\bm\phi(0) e^{{\rm i} \nu_{d} \theta}+\gamma \theta e^{{\rm i} \nu_{d} \theta}\bm \psi_d, \\
\dot{\bm\phi}(0) &={\rm i} \nu_{d}\bm \phi(0)+\gamma \bm \psi_d.
\end{aligned}
\end{equation}
This, together with the second equation of \eqref{dotphi}, yields
\begin{equation}\label{Deltaphi1}
\begin{aligned}
\Delta\left(d, {\rm i} \nu_d, \tau_{d,l}\right)\bm \phi(0)=&dA\bm \phi(0)+ \operatorname{diag} \left(f_j (u^d_{j},u^d_{j})\right)\bm \phi(0) + \operatorname{diag}\left(u^d_{j} a_{j}^{d}\right)\bm\phi(0)\\
&+ e^{{-\rm i} \theta_{d}}\operatorname{diag}\left(u^d_{j} b_{j}^{d}\right)\bm\phi(0) - {\rm i}\nu_{d} \bm \phi(0)\\
=& \gamma \left(\bm\psi_d+ \tau_{d,l} e^{{-\rm i} \theta_{d}}\operatorname{diag}(u^d_{j} b_{j}^{d})\bm \psi_d\right).
\end{aligned}
\end{equation}
Multiplying both sides of \eqref{Deltaphi1} by $(\overline{\widetilde\psi}_{d,1}, \cdots, \overline{\widetilde\psi}_{d,n})$ to the left, we have
$$
\begin{aligned}
0 &=\left\langle  \widetilde \Delta\left(d, {\rm i} \nu_{d},\tau_{d,l}\right)\widetilde{\bm \psi}_{d}, \bm \phi(0)\right\rangle=\left\langle\widetilde{\bm \psi}_d, \Delta\left({d}, {\rm i} \nu_{d}, \tau_{d,l}\right)\bm \phi(0)\right\rangle\\
& = \gamma\left(\sum_{j=1}^n\overline{\widetilde\psi}_{d,j}\psi_{d,j}+ \tau_{d,l} e^{{-\rm i} \theta_{d}} \sum_{j=1}^n u^d_{j}b_{j}^{d}\overline{\widetilde\psi}_{d,j}\psi_{d,j}\right).
\end{aligned}
$$
Define
\begin{equation}\label{Sn}
S_{l}(d):=\sum_{j=1}^n\overline{\widetilde{\psi}}_{d,j}\psi_{d,j}+ \tau_{d,l} e^{{-\rm i} \theta_d} \sum_{j=1}^n u^d_{j}b_j^{d}\overline{\widetilde{\psi}}_{d,j}\psi_{d,j}.
\end{equation}
By Theorems \ref{uniquesolution}, \ref{solutionvtaupsi} and \eqref{va111}, we have $\bm \psi_d\to \bm \eta$, $\widetilde{\bm \psi}_d\to \bm \va$, $\theta_d\to \theta_{d_*}$, $(d_*-d)\tau_{d,l}\to \frac{\theta_{d_*}+2l\pi}{h_{d_*}}$ and $b_j^{d}\to b_j$ for $j=1,\cdots,n$ as $d\to d_*$, where $\theta_{d_*}$ and $h_{d_*}$ are defined in \eqref{psi0theta0nu0}.
Then we see from \eqref{ula} and \eqref{psi0theta0nu0} that
\begin{equation*}
\lim_{d\to{d_*}}S_l(d)=\sum_{j=1}^n \va_j \eta_j \left[1+\left(\theta_{d_*}+2 l \pi\right)\left(\frac{-\tilde a}{\sqrt{\tilde{b}^{2}-\tilde a^{2}}} +{\rm i}\right)\right]
\neq 0,
\end{equation*}
which implies that $\gamma=0$ for $d \in [\tilde d_2, d_*)$, where $0<d_*-\tilde d_2\ll 1$. Therefore, for any $l=0,1,2,\cdots$,
$$
\mathscr{N}[A_{\tau_{d,l}}(d)-{\rm i}\nu_d]^j
=\mathscr{N}[A_{\tau_{d,l}}(d)-{\rm i}\nu_d],\;\;j=
2,3,\cdots,
$$
and consequently, ${\rm i}\nu_d$ is a simple eigenvalue of
$A_{\tau_{d,l}}(d)$ for $l=0,1,2,\cdots$.
\end{proof}

By Theorem \ref{simpleeigenvalue}, we see that $\mu=\textrm{i}\nu_{d}$ is a simple eigenvalue of $A_{\tau_{d,l}}(d)$. Then, it follows from the implicit function theorem, for each $l=0,1,\cdots$, there exists a neighborhood $O_{l}\times D_{l}\times H_{l}$ of $(\tau_{d,l},\textrm{i}\nu_d,{\bm \psi}_d)$ and a continuously
differentiable function $(\mu(\tau),\bm\psi(\tau)):O_{q,l}\rightarrow D_{q,l}\times H_{q,l}$ such that $
\mu(\tau_{d,l})=\textrm{i}\nu_d$, $\bm \psi(\tau_{d,l})={\bm \psi}_d$, and for each $\tau\in O_{l}$, the only eigenvalue of $A_\tau(d)$ in $D_{l}$ is $\mu(\tau),$ and
\begin{equation}\label{Deltapsi}
\begin{split}
\Delta(d,\mu(\tau),\tau)\bm\psi(\tau)=&dA\bm \psi(\tau)+ \operatorname{diag} \left(f_j (u^d_{j},u^d_{j})\right)\bm \psi(\tau) + \operatorname{diag}\left(u^d_{j} a_{j}^{d}\right)\bm\psi(\tau)\\
&+ e^{-\mu(\tau) \tau}\operatorname{diag}\left(u^d_{j} b_{j}^{d}\right)\bm\psi(\tau) - \mu(\tau) \bm \psi(\tau)=\bm 0.
\end{split}
\end{equation}
Then, we prove that the following transversality condition holds.

\begin{theorem}\label{Remu}
Assume that $\bf(H0)$-$\bf(H1)$ hold, $\tilde a-\tilde b>0$, and $d \in [\tilde d_2, d_*)$, where $0<d_*-\tilde d_2\ll 1$. Then
$$
\frac{d \mathcal{R} e\left[\mu\left(\tau_{d,l}\right)\right]}{d \tau}>0, \quad l=0,1,2, \cdots.
$$
\end{theorem}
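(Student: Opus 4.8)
The plan is to obtain a closed formula for $\mu'(\tau_{d,l})=\tfrac{d\mu(\tau_{d,l})}{d\tau}$ by implicit differentiation of \eqref{Deltapsi}, pairing against the adjoint eigenvector from Proposition \ref{p}, and then to read off the sign of $\mathcal{R}e[\mu'(\tau_{d,l})]$ by letting $d\to d_*$.

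First I would differentiate the identity $\Delta(d,\mu(\tau),\tau)\bm\psi(\tau)=\bm 0$ in \eqref{Deltapsi} with respect to $\tau$, noting that $\tfrac{d}{d\tau}e^{-\mu(\tau)\tau}=-(\mu'(\tau)\tau+\mu(\tau))e^{-\mu(\tau)\tau}$, so that the derivative regroups as $\Delta(d,\mu(\tau),\tau)\dot{\bm\psi}(\tau)-(\mu'(\tau)\tau+\mu(\tau))e^{-\mu(\tau)\tau}{\rm diag}(u_j^db_j^d)\bm\psi(\tau)-\mu'(\tau)\bm\psi(\tau)=\bm 0$. Evaluating at $\tau=\tau_{d,l}$, where $\mu(\tau_{d,l})={\rm i}\nu_d$, $\bm\psi(\tau_{d,l})=\bm\psi_d$ and $e^{-{\rm i}\nu_d\tau_{d,l}}=e^{-{\rm i}\theta_d}$ by \eqref{taunupsi}, and then taking the $\mathbb C^n$ inner product with the adjoint eigenvector $\widetilde{\bm\psi}_d$, the $\Delta\dot{\bm\psi}$ term disappears since $\langle\widetilde{\bm\psi}_d,\Delta(d,{\rm i}\nu_d,\tau_{d,l})\bm v\rangle=\langle\widetilde\Delta(d,{\rm i}\nu_d,\tau_{d,l})\widetilde{\bm\psi}_d,\bm v\rangle=0$ for all $\bm v$, by \eqref{tildedelta}. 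What remains, upon recognizing the quantity $S_l(d)$ of \eqref{Sn}, is $\mu'(\tau_{d,l})\,S_l(d)=-{\rm i}\nu_d e^{-{\rm i}\theta_d}\sum_{j=1}^n u_j^db_j^d\overline{\widetilde{\psi}}_{d,j}\psi_{d,j}$, i.e.
$$\mu'(\tau_{d,l})=-\frac{{\rm i}\nu_d e^{-{\rm i}\theta_d}\sum_{j=1}^n u_j^db_j^d\overline{\widetilde{\psi}}_{d,j}\psi_{d,j}}{S_l(d)}.$$

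Next I would pass to the limit $d\to d_*$. Because $\nu_d=(d_*-d)h_d$ and, by \eqref{ula}, $u_j^d=\beta^d(d_*-d)[\eta_j+(d_*-d)\xi^d_j]$, the numerator above is $O((d_*-d)^2)$, so I study $\mu'(\tau_{d,l})/(d_*-d)^2$. Using Lemma \ref{asymp}, Theorems \ref{uniquesolution} and \ref{solutionvtaupsi}, Proposition \ref{p}, and \eqref{abd}--\eqref{r1r2} (so $\bm\psi_d\to\bm\eta$, $\widetilde{\bm\psi}_d\to\bm\va$, $h_d\to h_{d_*}$, $\theta_d\to\theta_{d_*}$, $\beta^d\to\beta^{d_*}$, $b_j^d\to b_j$), together with the evaluation of $\lim_{d\to d_*}S_l(d)=\sum_{j=1}^n\eta_j\va_j[1+(\theta_{d_*}+2l\pi)(\tfrac{-\tilde a}{\sqrt{\tilde b^2-\tilde a^2}}+{\rm i})]$ already established in the proof of Theorem \ref{simpleeigenvalue} (which in particular is nonzero, so $S_l(d)\neq0$ near $d_*$), one arrives at
$$\lim_{d\to d_*}\frac{\mu'(\tau_{d,l})}{(d_*-d)^2}=\frac{-{\rm i}\,h_{d_*}\beta^{d_*}e^{-{\rm i}\theta_{d_*}}\tilde b}{\sum_{j=1}^n\eta_j\va_j\big[1+(\theta_{d_*}+2l\pi)\big(\tfrac{-\tilde a}{\sqrt{\tilde b^2-\tilde a^2}}+{\rm i}\big)\big]}.$$
From \eqref{psi0theta0nu0} and \eqref{tildeabab}, $\cos\theta_{d_*}=-\tilde a/\tilde b$ with $\tilde b<0$, hence $e^{-{\rm i}\theta_{d_*}}\tilde b=-\tilde a+{\rm i}\sqrt{\tilde b^2-\tilde a^2}$ and $-{\rm i}e^{-{\rm i}\theta_{d_*}}\tilde b=\sqrt{\tilde b^2-\tilde a^2}+{\rm i}\tilde a$. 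Writing $P:=\sqrt{\tilde b^2-\tilde a^2}>0$ and $Q:=\theta_{d_*}+2l\pi>0$, the numerator becomes $h_{d_*}\beta^{d_*}(P+{\rm i}\tilde a)$, and the crucial algebraic cancellation $\mathcal{R}e\big[(P+{\rm i}\tilde a)\overline{((1-Q\tilde a/P)+{\rm i}Q)}\big]=P(1-Q\tilde a/P)+\tilde aQ=P$ shows that the real part of the limit equals $h_{d_*}\beta^{d_*}P\big/\big(\sum_{j=1}^n\eta_j\va_j\,\big|1+Q(-\tilde a/P+{\rm i})\big|^2\big)>0$, since $h_{d_*}>0$ and $\beta^{d_*}>0$ by \eqref{psi0theta0nu0} and \eqref{beta*}. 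As $(d_*-d)^2>0$, this yields $\mathcal{R}e[\mu'(\tau_{d,l})]>0$ for every $d$ in a left neighbourhood of $d_*$, which is the assertion (after possibly shrinking $\tilde d_2$), uniformly in $l$ because the denominator's modulus only grows with $l$.

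The main obstacle is the last step: one must keep exact track of the two independent factors of $(d_*-d)$ (from $\nu_d$ and from $u_j^d$) so that the rescaled derivative has a finite, nonzero limit, and then extract the sign. That sign rests entirely on the identity $P(1-Q\tilde a/P)+\tilde aQ=P$, which encodes the particular values $\theta_{d_*}=\arccos(-\tilde a/\tilde b)$ and $h_{d_*}$ from Lemma \ref{bmF0}, and on $P=\sqrt{\tilde b^2-\tilde a^2}$ being a genuine positive real number — guaranteed by $\tilde a+\tilde b<0$ and the standing hypothesis $\tilde a-\tilde b>0$, which together give $|\tilde a|<|\tilde b|$. A minor technical point, handled routinely, is to justify that $\mu(\tau)$ is $C^1$ near $\tau_{d,l}$ (from the implicit function theorem, already invoked before the statement) and that the limit $d\to d_*$ may be taken termwise since all ingredients depend continuously on $d$.
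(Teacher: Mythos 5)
Your proposal is correct and follows essentially the same route as the paper: implicit differentiation of \eqref{Deltapsi} at $\tau=\tau_{d,l}$, pairing with the adjoint eigenvector $\widetilde{\bm\psi}_d$ to kill the $\Delta\dot{\bm\psi}$ term and isolate $\mu'(\tau_{d,l})=-{\rm i}\nu_d e^{-{\rm i}\theta_d}\sum_j u_j^d b_j^d\overline{\widetilde\psi}_{d,j}\psi_{d,j}/S_l(d)$, then rescaling by $(d_*-d)^2$ and sending $d\to d_*$. The only cosmetic difference is that the paper multiplies through by $\overline{S_l(d)}/|S_l(d)|^2$ and discards a purely imaginary term before passing to the limit, whereas you take the limit of the complex quotient first and extract the real part via the cancellation $P(1-Q\tilde a/P)+\tilde aQ=P$; both yield the same positive limit $(\beta^{d_*})^2(\tilde b^2-\tilde a^2)/\lim_{d\to d_*}|S_l(d)|^2$.
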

\begin{proof}
Differentiating Eq. \eqref{Deltapsi} with respect to $\tau$ at $\tau=\tau_{d,l}$, we have
\begin{equation}\label{dmu}
\begin{split}
\displaystyle -\frac{d \mu\left(\tau_{d,l}\right)}{d \tau} &\left( \tau_{d,l}\operatorname{diag}(u^d_{j}b_{j}^{d})\bm\psi_{d}e^{-{\rm i}\theta_{d}}+\bm \psi_d \right)+\Delta\left(d, {\rm i} \nu_d, \tau_{d,l}\right) \frac{\bm \psi \left(\tau_{d,l}\right)}{d \tau}\\
&- {\rm i} \nu_d \operatorname{diag}(u^d_{j}b_{j}^{d})\bm\psi_{d}e^{-{\rm i}\theta_d} =\bm 0.
\end{split}
\end{equation}
Clearly,
\begin{equation*}
\left\langle \widetilde{\bm \psi}_d,\Delta\left(d, {\rm i} \nu_d, \tau_{d,l}\right) \frac{d \bm \psi\left(\tau_{d,l}\right)}{d \tau}\right\rangle=\left\langle \widetilde\Delta\left(d,{\rm i} \nu_d,\tau_{d,l}\right) \widetilde{\bm \psi}_d, \frac{d \bm \psi\left(\tau_{d,l}\right)}{d \tau}\right\rangle= 0.
\end{equation*}
Then, multiplying both sides of Eq. \eqref{dmu} by $(\overline{\widetilde\psi}_{d,1}, \cdots, \overline{\widetilde\psi}_{d,n})$ to the left, we have
\begin{equation*}
\begin{aligned}
\frac{d \mu\left(\tau_{d,l}\right)}{d \tau}=& \frac{-{\rm i} \nu_{d}\sum_{j=1}^{n} u^d_{j} b_{j}^{d}\overline{\widetilde\psi}_{d,j}\psi_{d,j}e^{-{\rm i}\theta_{d}}}{\sum_{j=1}^{n}\overline{\widetilde\psi}_{d,j}\psi_{d,j}+ \tau_{d,l} \sum_{j=1}^{n} u^d_{j} b_{j}^{d} \overline{\widetilde\psi}_{d,j}\psi_{d,j} e^{-{\rm i}\theta_{d}}} \\
=&\frac{1}{\left|S_{l}(d)\right|^{2}}\left[-{\rm i}  \nu_{d}e^{-{\rm i}\theta_{d}}\left(\sum_{j=1}^{n}{\widetilde\psi}_{d,j}\overline{\psi}_{d,j} \right) \sum_{j=1}^{n}  u^d_{j} b_{j}^{d} \overline{\widetilde\psi}_{d,j}\psi_{d,j}\right.\\
&\left.-{\rm i} \nu_{d}\tau_{d,l} \left(\sum_{j=1}^{n} u^d_{j} b_{j}^{d} \overline{\widetilde\psi}_{d,j}\psi_{d,j}\right) \left(\sum_{j=1}^{n} u^d_{j} b_{j}^{d} {\widetilde\psi}_{d,j}\overline{\psi}_{d,j}\right)\right].
\end{aligned}
\end{equation*}
It follows from Theorems \ref{uniquesolution}, \ref{solutionvtaupsi} and \eqref{va111} that $\bm \psi_{d}\to \bm \eta$, $\widetilde{\bm \psi}_{d}\to \bm \va$, $\theta_d\to \theta_{d_*}$, $\ds \frac{\nu_{d}}{d_*-d}= {h_{d}}\to {h_{d_*}}$ and $b_j^{d}\to b_j$ for $j=1,\cdots,n$ as $d\to d_*$, where $\theta_{d_*}$ and $h_{d_*}$ are defined in \eqref{psi0theta0nu0}.
Then we see that
$$\begin{aligned}
\lim _{d \rightarrow {d_*}} &\frac{1}{(d_*-d)^2}\frac{d \mathcal{R} e\left[\mu\left(\tau_{d,l}\right)\right]}{d \tau}=\frac{ \left(\beta^{d_*} \right)^2 \left({\tilde b}^2-{\tilde a}^2\right)}{\lim _{d\rightarrow d_*}\left|S_{l}(d)\right|^{2}}>0,
\end{aligned}$$
where we have used \eqref{tildeabab} in the last step.
This completes the proof.
\end{proof}

By Theorems \ref{solutionvtaupsi}, \ref{simpleeigenvalue} and \ref{Remu}, we obtain the main result for this subsection.
\begin{theorem}\label{stable}
Assume that $\bf(H0)$-$\bf(H1)$ hold and $d \in [\tilde d_2, d_*)$, where $0<d_*-\tilde d_2 \ll 1$. Let $\bm {u}^{d}$ be the positive equilibrium of model \eqref{M1} obtained in Lemma \ref{GAS}. Then the following statements hold.
\begin{enumerate}
\item [{\rm (i)}] If $\tilde a-\tilde b<0$, where $\tilde a$ and $\tilde b$ are defined in \eqref{tilder1r2},
then $\bm {u}^{d}$ is locally asymptotically stable for $\tau \in\left[0, \infty\right)$.
\item [{\rm (ii)}] If $\tilde a-\tilde b>0,$
then there exists $\tau_{d,0}>0$ such that $\bm {u}^{d}$ of \eqref{M1} is locally asymptotically stable for $\tau \in\left[0, \tau_{d,0}\right)$, and unstable for $\tau \in\left(\tau_{d,0}, \infty\right).$ Moreover, when $\tau=\tau_{d,0},$ system \eqref{M1} undergoes a Hopf bifurcation at $\bm {u}^{d}$.
\end{enumerate}
\end{theorem}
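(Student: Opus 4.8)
The plan is to assemble the results already established in this subsection, so that only some standard spectral bookkeeping for retarded functional differential equations remains. Statement~(i) is immediate from Theorem~\ref{sstability}: since $\tilde d_2\ge d_2$ (recall the remark preceding Theorem~\ref{simpleeigenvalue}), whenever $\tilde a-\tilde b<0$ we have $\sigma(A_\tau(d))\subset\{x+{\rm i}y:x<0\}$ for every $d\in[\tilde d_2,d_*)$ and every $\tau\ge0$, and by the principle of linearized stability for delay differential equations (see \cite{Wu1996Theory}) the equilibrium $\bm u^d$ is locally asymptotically stable for all $\tau\ge0$.

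For~(ii), assume $\tilde a-\tilde b>0$. First I would anchor the argument at $\tau=0$: by Lemma~\ref{GAS}, $\bm u^d$ is globally, hence locally, asymptotically stable when $\tau=0$, so every eigenvalue of $A_0(d)$ lies in the open left half-plane; in particular, since $\Delta(d,0,\tau)$ is independent of $\tau$ and coincides with the characteristic matrix at $\tau=0$, the value $\mu=0$ is never an eigenvalue of $A_\tau(d)$. Next, set $\tau_{d,0}:=\theta_d/\nu_d$, the smallest of the values $\tau_{d,l}$ from Theorem~\ref{solutionvtaupsi}; since $\theta_d\to\theta_{d_*}=\arccos(-\tilde a/\tilde b)\in(0,\pi)$ and $\nu_d=(d_*-d)h_d>0$ for $0<d_*-d\ll1$, we have $\tau_{d,0}>0$. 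By Theorem~\ref{solutionvtaupsi}, for $\tau\in[0,\tau_{d,0})$ the operator $A_\tau(d)$ has no eigenvalue on the imaginary axis, the only purely imaginary candidate ${\rm i}\nu_d$ requiring $\tau\ge\tau_{d,0}$ and $\mu=0$ being excluded. Using the uniform bound on the eigenvalues of $A_\tau(d)$ in the closed right half-plane (as in the estimate for $|\mu_d|$ in the proof of Lemma~\ref{bounded1}) together with the continuous dependence of the spectrum on $\tau$, no eigenvalue can move from the open left half-plane to the right without first crossing the imaginary axis; hence $\sigma(A_\tau(d))$ stays in the open left half-plane for all $\tau\in[0,\tau_{d,0})$, and $\bm u^d$ is locally asymptotically stable there.

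At $\tau=\tau_{d,0}$, Theorem~\ref{solutionvtaupsi} shows that $\pm{\rm i}\nu_d$ are eigenvalues of $A_{\tau_{d,0}}(d)$ and are the only ones on the imaginary axis (again $\mu=0$ is excluded, and ${\rm i}k\nu_d$ with $|k|\ge2$ is never an eigenvalue); Theorem~\ref{simpleeigenvalue} gives their simplicity, and Theorem~\ref{Remu} the transversality condition $\frac{d\mathcal{R}e[\mu(\tau_{d,0})]}{d\tau}>0$. Thus the hypotheses of the Hopf bifurcation theorem for delay differential equations (\cite{Wu1996Theory}) are met, and \eqref{M1} undergoes a Hopf bifurcation at $\bm u^d$ when $\tau=\tau_{d,0}$. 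For $\tau>\tau_{d,0}$ I would deduce instability by a crossing-number count: the pair $\pm{\rm i}\nu_d$ crosses transversally into the open right half-plane at $\tau_{d,0}$, so two eigenvalues lie there for $\tau$ slightly larger than $\tau_{d,0}$; since any crossing of the imaginary axis can occur only at some $\tau_{d,l}$ and, by Theorem~\ref{Remu} applied for general $l$, is always from left to right, the number of eigenvalues of $A_\tau(d)$ in the open right half-plane is nondecreasing in $\tau$ and remains at least~$2$ for all $\tau>\tau_{d,0}$, whence $\bm u^d$ is unstable there.

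I expect the only delicate point to be this final monotone-crossing step: one has to be certain that no eigenvalue branch can return to the left half-plane --- which is exactly why the transversality at every $\tau_{d,l}$ (not merely at $\tau_{d,0}$) and the completeness of the characterization in Theorem~\ref{solutionvtaupsi} are needed --- and that the usual compactness and continuity properties of the spectrum of retarded equations prevent eigenvalues from entering the right half-plane ``from infinity''. Everything else is a direct invocation of the theorems already proved.
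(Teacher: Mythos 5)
Your proposal is correct and follows essentially the same route as the paper, which states Theorem \ref{stable} as a direct consequence of Theorems \ref{sstability}, \ref{solutionvtaupsi}, \ref{simpleeigenvalue} and \ref{Remu} without writing out the spectral bookkeeping; your filled-in details (exclusion of $\mu=0$ via stability at $\tau=0$ and the $\tau$-independence of $\Delta(d,0,\tau)$, the continuity/boundedness argument preventing crossings on $[0,\tau_{d,0})$, and the monotone left-to-right crossing count for instability when $\tau>\tau_{d,0}$) are exactly the standard steps the authors leave implicit.
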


\subsection{The case of $0<d\ll 1$}
In this section, we will consider the case of $0<d\ll 1$.
First, we give \textit{a priori} estimates for solutions of \eqref{2triangle}.
\begin{lemma}\label{3nu}
Assume that $\left(\mu^d, {\tau}^d, \bm{\varphi}^d \right)$ solves \eqref{2triangle}, where $\mathcal{R}e \mu^{d}, {\tau}^d \ge 0$, and $\bm{\varphi}^d=(\varphi_1^d,\cdots,\varphi_n^d)^T(\ne\bm 0)
\in\mathbb{C}^n$. Then for any $\tilde d>0$, $\left|\mu^d\right|$ is bounded for $d\in(0,\tilde d]$.
\end{lemma}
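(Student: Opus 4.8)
The plan is to transplant the opening \emph{a priori} bound from the proof of Lemma~\ref{bounded1} --- the step establishing that $|\mu_d|$ is bounded --- to the regime $0<d\ll1$; the only genuinely new ingredient is a uniform control of the equilibrium data $\bm u^d$, and of the coefficients $a_j^d,b_j^d$, as $d\to0$. Since $|\mu^d|$ is unaffected by rescaling the eigenvector, I would first assume without loss of generality that $\|\bm\varphi^d\|_2=1$.

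Next I would substitute $(\mu^d,\tau^d,\bm\varphi^d)$ into \eqref{2triangle}, multiply the $j$-th equation by $\overline{\varphi_j^d}$, sum over $j$, and solve for $\mu^d$:
\begin{equation*}
\mu^d=d\sum_{j,k=1}^n\alpha_{jk}\overline{\varphi_j^d}\varphi_k^d+\sum_{j=1}^n f_j(u_j^d,u_j^d)|\varphi_j^d|^2+\sum_{j=1}^n u_j^d a_j^d|\varphi_j^d|^2+e^{-\mu^d\tau^d}\sum_{j=1}^n u_j^d b_j^d|\varphi_j^d|^2.
\end{equation*}
Because $\mathcal{R}e\,\mu^d\ge0$ and $\tau^d\ge0$, one has $|e^{-\mu^d\tau^d}|=e^{-\tau^d\mathcal{R}e\,\mu^d}\le1$; combining this with $\sum_j|\varphi_j^d|^2=1$ and $\big(\sum_j|\varphi_j^d|\big)^2\le n$ (Cauchy--Schwarz), the triangle inequality gives
\begin{equation*}
|\mu^d|\le n\,\tilde d\max_{1\le j,k\le n}|\alpha_{jk}|+\max_{1\le j\le n}|f_j(u_j^d,u_j^d)|+\max_{1\le j\le n}|u_j^d a_j^d|+\max_{1\le j\le n}|u_j^d b_j^d|.
\end{equation*}

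It then remains only to bound the right-hand side uniformly for $d\in(0,\tilde d]$, where I take $\tilde d<d_*$ so that the positive equilibrium $\bm u^d$, and hence \eqref{2triangle}, is defined. By Lemma~\ref{asymp}(i) the map $d\mapsto\bm u^d$ extends to a continuously differentiable (in particular continuous) map on $[0,\tilde d]$, so the curve $\{(u_j^d,u_j^d):d\in[0,\tilde d]\}$ is compact for each $j$; since $f_j\in C^4(\mathbb R\times\mathbb R,\mathbb R)$ by $\bf(H1)$, the functions $f_j$, $\partial f_j/\partial x$, $\partial f_j/\partial y$ are continuous, so $f_j(u_j^d,u_j^d)$ and the coefficients $a_j^d,b_j^d$ of \eqref{abd} stay bounded there as well. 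Hence all three maxima above are finite and $|\mu^d|$ is bounded by a constant depending only on $\tilde d$, $A$ and the $f_j$. I do not anticipate a real obstacle here: this is the boundedness argument of Lemma~\ref{bounded1} carried over almost verbatim, and the single point that merits attention is the uniform boundedness of $\bm u^d$, $a_j^d$, $b_j^d$ as $d\to0$, which is exactly what the continuous differentiability of $\bm u^d$ on $[0,d_*)$ from Lemma~\ref{asymp}(i) supplies.
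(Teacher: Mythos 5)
Your proposal is correct and follows essentially the same route as the paper: normalize $\|\bm\varphi^d\|_2=1$, pair \eqref{2triangle} with $\overline{\bm\varphi^d}$, use $|e^{-\mu^d\tau^d}|\le 1$, and bound the resulting coefficients uniformly via the continuity of $d\mapsto\bm u^d$ on $[0,d_*)$ from Lemma~\ref{asymp}(i). Your explicit remark that one must take $\tilde d<d_*$ for the equilibrium (and hence \eqref{2triangle}) to be defined is a point the paper leaves implicit, but it does not change the argument.
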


\begin{proof}
Without loss of generality, we assume that $\|\bm{\varphi}^d\|_2^2=1$.
Substituting $(\mu^d, {\tau}^d, \bm{\varphi}^d)$ into \eqref{2triangle} and multiplying both sides of \eqref{2triangle} by $\left(\overline {\varphi_1^d},\cdots,\overline {\varphi_n^d}\right)$ to the left, we obtain that
\begin{equation*}
\begin{split}
\left(\overline {\varphi_1^d},\cdots,\overline
{\varphi_n^d}\right)&\left[dA\bm{\varphi}^d+{\rm diag}\left(f_j(u_j^d,u_j^d)\right)\bm \varphi^d+\operatorname{diag}\left(u_j^d a_{j}^{d}\right)\bm{ \varphi}^d\right.\\
&\left.+e^{-\mu^d {\tau}^d}{\rm diag}(u_j^d b_{j}^{d})\bm{ \varphi}^d-\mu^d\bm{\varphi}^d\right]=0.
\end{split}
\end{equation*}
Then, for $d\in(0,\tilde d]$, we have
\begin{equation*}
\left|\mu^d\right|\le \max_{d\in[0,\tilde d],1\le j\le n}|f_j(u_j^d,u_j^d)|+ \max_{d\in[0,\tilde d],1\le j\le n}|u_{j}^{d} a_{j}^{d}|+ \max_{d\in[0,\tilde d],1\le j\le n}|u_{j}^{d} b_{j}^{d}|+\tilde d n\max_{1\le j,k\le n} |\alpha_{jk}|,
\end{equation*}
and consequently, $\left|\mu^d\right|$ is bounded for $d\in(0,\tilde d]$.
\end{proof}

Using similar arguments as in the proof of Theorem \ref{sstability}, we can obtain the following result, and here we omit the proof for simplicity.

\begin{theorem}\label{sstability1}
Assume that $\bf(H0)$-$\bf(H1)$ hold, and
$a_j^0-b_j^0<0$ for all $j=1,\cdots,n$,
where $a_j^0$ and $b_j^0$ are defined in \eqref{d12}.
Then there exists $\hat d_1\in (0,d_*]$, such that
$$\sigma\left(A_{\tau}(d)\right) \subset\{x+ {\rm i} y: x, y \in \mathbb{R}, x<0\}\;\;\text{for}\;\;d\in(0,\hat d_1]\;\;\text{and}\;\;\tau\ge0.$$
\end{theorem}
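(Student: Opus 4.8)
The plan is to run the same contradiction argument as in the proof of Theorem~\ref{sstability}, but linearising around $d=0$ instead of $d=d_*$; this case is actually more transparent, because the limiting eigenvalue problem decouples into scalar equations. Suppose the assertion fails. Then there is a sequence $d_l\to 0^+$ and, for each $l$, a solution $(\mu^{d_l},\tau^{d_l},\bm\varphi^{d_l})$ of \eqref{2triangle} with $\mathcal{R}e\,\mu^{d_l}\ge 0$, $\mathcal{I}m\,\mu^{d_l}\ge 0$ (admissible since $A_\tau(d)$ has real coefficients, so eigenvalues come in conjugate pairs), $\tau^{d_l}\ge 0$, and $\bm 0\neq\bm\varphi^{d_l}\in\mathbb C^n$ normalised so that $\|\bm\varphi^{d_l}\|_2=1$. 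By Lemma~\ref{3nu} the sequence $\{|\mu^{d_l}|\}$ is bounded, so after passing to a subsequence we may assume
\begin{equation*}
\mu^{d_l}\to\mu^*,\qquad e^{-\tau^{d_l}\mathcal{R}e\,\mu^{d_l}}\to\sigma^*\in[0,1],\qquad e^{-{\rm i}\tau^{d_l}\mathcal{I}m\,\mu^{d_l}}\to e^{-{\rm i}\theta^*},\qquad \bm\varphi^{d_l}\to\bm\varphi^*,
\end{equation*}
with $\mathcal{R}e\,\mu^*\ge 0$, $\theta^*\in[0,2\pi)$ and $\|\bm\varphi^*\|_2=1$.

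Next I would take the limit $l\to\infty$ in \eqref{2triangle}. By Lemma~\ref{asymp}$(i)$ the equilibrium extends continuously to $d=0$ with $\bm u^0=(u_1^0,\dots,u_n^0)^T\gg\bm 0$; combining this with the $C^4$-smoothness in $\bf(H1)$ and the definitions \eqref{abd}, \eqref{d12}, we obtain $a_j^{d}\to a_j^0$, $b_j^{d}\to b_j^0$ and, crucially, $f_j(u_j^{d},u_j^{d})\to f_j(u_j^0,u_j^0)=g_j(u_j^0)=0$. Since moreover $d_l A\bm\varphi^{d_l}\to\bm 0$, passing to the limit in \eqref{2triangle} gives the diagonal identity
\begin{equation*}
\big[\,u_j^0\big(a_j^0+\sigma^* e^{-{\rm i}\theta^*}b_j^0\big)-\mu^*\,\big]\varphi_j^*=0,\qquad j=1,\dots,n .
\end{equation*}
Because $\bm\varphi^*\neq\bm 0$, there is an index $j_0$ with $\varphi_{j_0}^*\neq 0$, hence $\mu^*=u_{j_0}^0\big(a_{j_0}^0+\sigma^* e^{-{\rm i}\theta^*}b_{j_0}^0\big)$; taking real parts and using $u_{j_0}^0>0$ and $\mathcal{R}e\,\mu^*\ge 0$ yields $a_{j_0}^0+\sigma^*b_{j_0}^0\cos\theta^*\ge 0$.

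The contradiction is then obtained exactly as in the last paragraph of the proof of Theorem~\ref{sstability}. From \eqref{aibi0n} we have $a_{j_0}^0+b_{j_0}^0<0$, and together with the hypothesis $a_{j_0}^0-b_{j_0}^0<0$ this forces $a_{j_0}^0<\min\{b_{j_0}^0,-b_{j_0}^0\}\le 0$ and $-1<-b_{j_0}^0/a_{j_0}^0<1$. Dividing the inequality $a_{j_0}^0+\sigma^*b_{j_0}^0\cos\theta^*\ge 0$ by $a_{j_0}^0<0$ gives $\sigma^*(-b_{j_0}^0/a_{j_0}^0)\cos\theta^*\ge 1$, which is impossible since $\sigma^*\in[0,1]$, $|\cos\theta^*|\le 1$ and $|b_{j_0}^0/a_{j_0}^0|<1$. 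This contradiction establishes the theorem, with $\hat d_1$ taken to be any sufficiently small number in $(0,d_*]$.

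The argument needs no new estimates beyond Lemmas~\ref{asymp} and \ref{3nu}, so the main work is bookkeeping: checking that all the limits above are genuinely valid, and in particular tracking why the diagonal entries $f_j(u_j^{d},u_j^{d})$ drop out in the limit. It is exactly this collapse to a diagonal problem that replaces the averaged coefficients $\tilde a,\tilde b$ of the $d\to d_*$ analysis by the pointwise coefficients $a_j^0,b_j^0$, and thus makes ``$a_j^0-b_j^0<0$ for all $j$'' the natural stability condition in this regime. The choice of the index $j_0$ causes no difficulty, since the hypothesis is assumed for every $j$, so the sign analysis goes through whichever component of $\bm\varphi^*$ happens to be nonzero.
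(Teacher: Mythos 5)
Your proof is correct and follows exactly the route the paper intends: the authors omit the proof of Theorem \ref{sstability1}, remarking only that it uses ``similar arguments as in the proof of Theorem \ref{sstability}'', and your argument is precisely that adaptation — the contradiction scheme with Lemma \ref{3nu} supplying the eigenvalue bound, the decoupling of \eqref{2triangle} into scalar equations as $d\to 0$ (using $f_j(u_j^0,u_j^0)=0$ from Lemma \ref{asymp}$(i)$), and the sign analysis with $a_{j_0}^0\pm b_{j_0}^0<0$ replacing $\tilde a\pm\tilde b<0$. No gaps.
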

It follows from Theorem \ref{sstability1} that if $a_j^0-b_j^0<0$ for all $j=1,\cdots,n$, then Hopf bifurcations can not occur for $0<d\ll 1$.
Then we define
\begin{equation}\label{M}
\mathcal {M}=\left\{j\in 1, \cdots, n:a_j^0-b_j^0>0\right\},
\end{equation}
and show that Hopf bifurcations can occur when $\mathcal {M} \ne \emptyset $.
For simplicity, we impose the following assumption:
\begin{enumerate}
\item [$\bf(H2)$] $a_j^0-b_j^0> 0$ for $j=1,\cdots,p$, and $a_j^0-b_j^0<0$ for $j=p+1,\cdots,n$, where $1\le p\le n$.
\end{enumerate}

In fact, if the patches are independent of each other ($d=0$), we have
\begin{equation}\label{Md0}
\displaystyle u_{j}'= u_j f_j \left(u_j, u_{j}(t-\tau)\right),\; t>0,\;\;j=1, \cdots, n.
\end{equation}
A direct computation implies the following result.
\begin{lemma}\label{pindependent}
Assume that $\bf(H1)$-$\bf(H2)$ hold. Then for each $1\le j\le n$, model \eqref{Md0} admits a unique positive equilibrium $u_j^0$, where $u_j^0$ (defined in Lemma \ref{asymp}) is the unique positive solution of $f_j(x,x)=0$.
Moreover, the following statements hold.
\begin{itemize}
  \item [$(i)$] For each $1\le j \le p$, the unique positive equilibrium $u_j^0$ of model \eqref{Md0} is locally asymptotically stable when $\tau\in[0, \tau_{j}^0)$, and
unstable when $\tau \in(\tau_{j}^0,\infty)$. Moreover, when $\tau=\tau_{j}^0$, model \eqref{Md0} undergoes a Hopf bifurcation, where
\begin{equation}\label{tauthetanu}
\ds\tau_{j}^0=\frac{\theta_j^0}{\nu_j^0}\;\;\text{with}\;\;\theta_j^0=\arccos(-a_{j}^{0}/b_{j}^{0})\in(0,\pi)\;\ \text{and}\;\; \nu_j^0=u_j^0\sqrt{(b_{j}^{0})^2-(a_{j}^{0})^2}>0.
\end{equation}
  \item [$(ii)$] For each $p+1\le j \le n$,  the unique positive equilibrium $u_j^0$ of model \eqref{Md0} is locally asymptotically stable for $\tau\ge0$.
\end{itemize}
\end{lemma}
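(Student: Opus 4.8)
The plan is to use the fact that for $d=0$ the system \eqref{Md0} decouples into $n$ independent scalar delay equations $\dot u_j=u_j f_j(u_j,u_j(t-\tau))$, each governed by a transcendental characteristic equation of the classical form $\mu=A+Be^{-\mu\tau}$. Fix $j$. By $\bf(H1)$ (cf.\ Lemma \ref{asymp}) the function $g_j(x):=f_j(x,x)$ is strictly decreasing on $(0,\infty)$ with $g_j(0)=m_j>0$ and possesses a unique positive zero $u_j^0$; since any positive equilibrium of \eqref{Md0} satisfies $u_j g_j(u_j)=0$ with $u_j>0$, it must equal $u_j^0$. Linearizing \eqref{Md0} at $u_j^0$ and using $f_j(u_j^0,u_j^0)=g_j(u_j^0)=0$, the perturbation $v=u_j-u_j^0$ solves $\dot v=u_j^0 a_j^0 v+u_j^0 b_j^0 v(t-\tau)$ with $a_j^0,b_j^0$ as in \eqref{d12}, so the characteristic equation is
\[
\mu=u_j^0 a_j^0+u_j^0 b_j^0 e^{-\mu\tau}.
\]
Throughout I would use only the identity $g_j'(u_j^0)=a_j^0+b_j^0<0$, combined with $\bf(H2)$.

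For $p+1\le j\le n$, combining $a_j^0-b_j^0<0$ with $a_j^0+b_j^0<0$ gives $a_j^0<0$ and $|b_j^0|<-a_j^0$. I would then argue that the characteristic equation has no root with $\mathcal{R}e\,\mu\ge0$ for any $\tau\ge0$: at $\tau=0$ its unique root $u_j^0(a_j^0+b_j^0)$ is negative, and a purely imaginary root $\mu={\rm i}\omega$ would force $u_j^0|b_j^0|=|{\rm i}\omega-u_j^0 a_j^0|\ge u_j^0|a_j^0|>u_j^0|b_j^0|$, a contradiction; since characteristic roots depend continuously on $\tau$, none can cross the imaginary axis, so $u_j^0$ is locally asymptotically stable for all $\tau\ge0$, which is $(ii)$.

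For $1\le j\le p$, combining $a_j^0-b_j^0>0$ with $a_j^0+b_j^0<0$ gives $b_j^0<0$ and $|a_j^0|<-b_j^0$, so at $\tau=0$ the root $u_j^0(a_j^0+b_j^0)<0$ and $u_j^0$ is stable. Seeking roots $\mu={\rm i}\omega$ with $\omega>0$, I would separate real and imaginary parts to get $\cos\omega\tau=-a_j^0/b_j^0\in(-1,1)$, $\sin\omega\tau=\omega/(u_j^0|b_j^0|)>0$, and, via $\cos^2+\sin^2=1$, $\omega^2=(u_j^0)^2\big((b_j^0)^2-(a_j^0)^2\big)$; hence $\omega=\nu_j^0$ and the crossing values are exactly $\tau_{j,l}^0=(\theta_j^0+2l\pi)/\nu_j^0$ with $\theta_j^0=\arccos(-a_j^0/b_j^0)\in(0,\pi)$, as in \eqref{tauthetanu}, the smallest being $\tau_j^0$. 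I would then check that $\mu={\rm i}\nu_j^0$ is a simple root (the $\mu$-derivative of the left side of the characteristic equation, $1+\tau u_j^0 b_j^0 e^{-\mu\tau}$, equals $(1-\tau u_j^0 a_j^0)+{\rm i}\tau\nu_j^0$ there, with nonzero imaginary part), and verify the transversality condition by implicit differentiation: eliminating $e^{-\mu\tau}$ through the characteristic equation yields
\[
\Big(\frac{d\mu}{d\tau}\Big)^{-1}\Big|_{\mu={\rm i}\nu_j^0,\ \tau=\tau_{j,l}^0}=\frac{1}{-\mu(\mu-u_j^0 a_j^0)}-\frac{\tau}{\mu}\Big|_{\mu={\rm i}\nu_j^0},
\]
whose real part is $1/\big((\nu_j^0)^2+(u_j^0 a_j^0)^2\big)>0$, so $\mathcal{R}e(d\mu/d\tau)>0$ at every $\tau_{j,l}^0$. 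Combined with the absence of imaginary roots for $\tau\in[0,\tau_j^0)$, this gives local asymptotic stability on $[0,\tau_j^0)$, instability for $\tau>\tau_j^0$, and, by the Hopf bifurcation theorem for delay equations, a Hopf bifurcation at $\tau=\tau_j^0$, proving $(i)$.

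The main difficulty here is not conceptual — the whole statement reduces to the textbook analysis of $\mu=A+Be^{-\mu\tau}$ — so the real work lies in correctly reading off the signs of $a_j^0\pm b_j^0$ from $\bf(H1)$ and $\bf(H2)$ in each regime, and in confirming that for $1\le j\le p$ no characteristic root sits on the imaginary axis for $\tau\in[0,\tau_j^0)$: the modulus relation pins any imaginary root to $\pm{\rm i}\nu_j^0$, and the phase relation then forces $\tau\in\{\tau_{j,l}^0\}$, so continuity of the roots in $\tau$ together with the $\tau=0$ stability genuinely propagates up to $\tau_j^0$.
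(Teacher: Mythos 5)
Your proof is correct and is exactly the ``direct computation'' the paper omits: the system decouples at $d=0$, and the classical analysis of the scalar characteristic equation $\mu = u_j^0 a_j^0 + u_j^0 b_j^0 e^{-\mu\tau}$ (sign bookkeeping from $\bf(H1)$--$\bf(H2)$, location of imaginary roots, simplicity, and transversality) is precisely what is intended. No discrepancies with the paper's framework.
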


Now, we consider the solution of \eqref{Deltaiv} for $d=0$.

\begin{lemma}\label{limz}
Assume that $\bf(H1)$-$\bf(H2)$ hold, $d=0$, and
\begin{equation}\label{isolp}
\ds\left({{\nu_j^0}, \theta_j^0}\right)\ne \left({{\nu_k^0},\theta_k^0}\right) \;\;\text{for any}\;\;j\ne k \;\;\text{and}\;\;1\le j,k\le p,
\end{equation}
where $\theta_j^0$ and $\nu_j^0$ are defined in \eqref{tauthetanu}
for $j=1,\cdots,p$.
Then
\begin{equation}
\left\{(\nu,\theta):\nu\ge0,\;\theta\in[0,2\pi], \;\mathcal S^0(\nu,\theta)\ne \{\bm 0\}\right\}=\left\{(\nu_q^0,\theta_q^0)\right\}_{q=1}^p,
\end{equation}
where $(\nu_q^0,\theta_q^0)\in(0,\infty)\times(0,\pi)$, and
\begin{equation*}
\mathcal S^0(\nu,\theta):=\{\bm \varphi: \bm H(0, \nu,\theta,\bm\varphi)=\bm 0\}
\end{equation*}
with $\bm H(d,\nu,\theta, \bm\varphi)$ defined in \eqref{Deltaiv}.
Moreover, denoting $\mathcal S_q=\mathcal S^0(\nu_q^0,\theta_q^0)$ for any $q=1,\cdots,p$, we have $\mathcal S_q=\{c\bm\varphi_q^0:c\in\mathbb C\}$, where
${\bm \varphi}^{0}_q=(\varphi^{0}_{q,1},\cdots,\varphi^{0}_{q,n})$,
$\varphi^{0}_{q,q}=1$ and $\varphi^{0}_{q,j}=0$ for $j\ne q$.
\end{lemma}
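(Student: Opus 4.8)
\textit{Proof proposal.}
The plan is to exploit the fact that at $d=0$ the defining equation \eqref{Deltaiv} decouples completely. Since $u_j^0$ is the positive root of $f_j(x,x)=0$, we have $f_j(u_j^0,u_j^0)=0$, so $\bm H(0,\nu,\theta,\bm\varphi)=\bm 0$ is exactly the $n$ scalar equations
\[
\bigl(u_j^0 a_j^0 + u_j^0 b_j^0 e^{-{\rm i}\theta} - {\rm i}\nu\bigr)\varphi_j = 0,\qquad j=1,\dots,n.
\]
Hence $\mathcal S^0(\nu,\theta)\neq\{\bm 0\}$ if and only if the scalar relation $u_j^0 a_j^0 + u_j^0 b_j^0 e^{-{\rm i}\theta} - {\rm i}\nu = 0$, which I abbreviate $(\ast_j)$, holds for at least one index $j$; and splitting $(\ast_j)$ into real and imaginary parts and using $u_j^0>0$, it is equivalent to
\[
a_j^0 + b_j^0\cos\theta = 0 \qquad\text{and}\qquad \nu = -u_j^0 b_j^0\sin\theta .
\]

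Next I would carry out a sign analysis of $(\ast_j)$, distinguishing the two ranges of $j$ in $\bf(H2)$. For $p+1\le j\le n$ we have $a_j^0-b_j^0<0$, and by \eqref{aibi0n} also $a_j^0+b_j^0<0$; taking the two together forces $a_j^0<-|b_j^0|\le 0$, hence $|a_j^0|>|b_j^0|$, so $a_j^0+b_j^0\cos\theta=0$ has no real solution $\theta$ (if $b_j^0=0$ it would force $a_j^0=0$; otherwise $|{-}a_j^0/b_j^0|>1$). Thus $(\ast_j)$ is never solvable for $j>p$. For $1\le j\le p$ we have $a_j^0-b_j^0>0$, which combined with $a_j^0+b_j^0<0$ gives $b_j^0<-|a_j^0|\le 0$, so $b_j^0<0$ and $-a_j^0/b_j^0\in(-1,1)$; consequently $\cos\theta=-a_j^0/b_j^0$ has in $[0,2\pi]$ exactly the two solutions $\theta_j^0:=\arccos(-a_j^0/b_j^0)\in(0,\pi)$ and $2\pi-\theta_j^0\in(\pi,2\pi)$, while the constraint $\nu=-u_j^0b_j^0\sin\theta\ge 0$ together with $-u_j^0b_j^0>0$ forces $\sin\theta\ge0$ and hence $\theta=\theta_j^0$, giving $\nu=\nu_j^0:=-u_j^0b_j^0\sin\theta_j^0=u_j^0\sqrt{(b_j^0)^2-(a_j^0)^2}>0$, matching \eqref{tauthetanu}; the endpoints $\theta\in\{0,\pi,2\pi\}$ are excluded because they would force $a_j^0=\pm b_j^0$. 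Therefore the only pairs with $\nu\ge0,\ \theta\in[0,2\pi]$ for which some $(\ast_j)$ holds are $(\nu_q^0,\theta_q^0)$, $q=1,\dots,p$, each lying in $(0,\infty)\times(0,\pi)$, which is the claimed set equality.

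Finally, for the eigenspace at a fixed pair $(\nu_q^0,\theta_q^0)$ I would identify which indices $j$ can carry a nonzero component $\varphi_j$, i.e. for which $j$ does $(\ast_j)$ hold at $(\nu,\theta)=(\nu_q^0,\theta_q^0)$: index $q$ does, by construction; no index $j>p$ does, by the previous paragraph; and for $j\le p$ with $j\ne q$, if $(\ast_j)$ held then the uniqueness just established would give $(\nu_q^0,\theta_q^0)=(\nu_j^0,\theta_j^0)$, contradicting assumption \eqref{isolp}. Hence $\varphi_j=0$ for every $j\ne q$ and $\varphi_q$ is free, so $\mathcal S_q=\{c\bm\varphi_q^0:c\in\mathbb C\}$ with $\varphi_{q,q}^0=1$ and $\varphi_{q,j}^0=0$ for $j\ne q$. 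The argument is entirely elementary; the only point that needs care is the sign bookkeeping — correctly combining $a_j^0+b_j^0<0$ from \eqref{aibi0n} with the two inequalities in $\bf(H2)$ to decide the sign of $b_j^0$ (resp. the size of $|a_j^0|$) and thus the solvability of $a_j^0+b_j^0\cos\theta=0$ — and invoking \eqref{isolp} precisely at the step where simplicity of $\mathcal S_q$ is required.
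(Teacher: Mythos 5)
Your proposal is correct and follows essentially the same route as the paper: at $d=0$ the eigenvalue problem decouples into $n$ scalar equations, whose real and imaginary parts are analyzed by the same sign bookkeeping combining $a_j^0+b_j^0<0$ with the two inequalities in $\bf(H2)$, and \eqref{isolp} is invoked at exactly the same point to conclude that $\mathcal S_q$ is one-dimensional. If anything your write-up is more careful than the paper's, which loosely asserts the scalar system holds ``for $j=1,\cdots,n$'' rather than for at least one $j$ and does not spell out the exclusion of the indices $j>p$ or of the endpoint values of $\theta$.
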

\begin{proof}
It follows from Lemma \ref{asymp} that  $u_j^{0}$ satisfies $f_j(u_j^{0},u_j^{0})=0$ for $j=1,\cdots,n$.
Therefore, if there exists $\bm \varphi\ne\bm 0$ such that
$\bm H(0, \nu,\theta,\bm\varphi)=\bm0$, then
\begin{equation*}
\prod_{i=1}^n(u_j^0a_{j}^{0}+u_j^0 b_{j}^{0}e^{-{\rm i}\theta}-{\rm i} \nu)=0,
\end{equation*}
and consequently, for $j=1,\cdots,n$,
\begin{equation*}
\begin{cases}
a_{j}^{0} +b_{j}^{0} \cos \theta=0, \\
u_j^0 b_{j}^{0} \sin \theta +\nu=0.
\end{cases}
\end{equation*}
It follows from $\bf(H1)$ and $\bf(H2)$ that $a_j^0+b_j^0<0$ for $j=1, \cdots, n$ and
$a_j^0-b_j^0>0$ for $j=1, \cdots, p$. Then, for $j=1,\cdots,p$,
\begin{equation}\label{sab}
b_{j}^{0}<\min \left\{a_{j}^{0},-a_{j}^{0}\right\} \leq 0 \text { and }-1<-a_{j}^{0}/ b_{j}^{0}<1,
\end{equation}
which leads to $\nu=\nu_q^0$, $\theta=\theta_q^0$ for $q=1,\cdots,p$, where $\nu_q^0$ and $\theta_q^0$ are defined in \eqref{tauthetanu}.
Since $\ds\left({{\nu_j^0}, \theta_j^0}\right)\ne \left({{\nu_k^0},\theta_k^0}\right)$ for any $j\ne k$ and $1\le j,k\le p$,
it follows that $\mathcal S^0_q=\{c\bm\varphi_q^0:c\in\mathbb C\}$. This completes the proof.
\end{proof}

\begin{remark}
We remark that if
\begin{equation}\label{strong}
\ds\frac{\theta_j^0}{\nu_j^0}\ne\frac{\theta_k^0}{\nu_k^0} \;\;\text{for any}\;\;j\ne k \;\;\text{and}\;\;1\le j,k\le p,
\end{equation}
then \eqref{isolp} in Lemma \ref{limz} hold. By Lemma \ref{pindependent}, we see that \eqref{strong} implies that the first Hopf bifurcation values of model  \eqref{Md0} for $1\le j\le p$ are not identical. That is, the first Hopf bifurcation values of each
isolated patch $j$ for $1\le j\le p$ are not identical.
\end{remark}
Then we consider the solution of \eqref{Deltaiv} for $0<d\ll 1$.
\begin{lemma}\label{l4.4}
Assume that $\bf(H0)$-$\bf(H2)$ and \eqref{isolp} hold,
and $d\in(0,\tilde d)$ with $0<\tilde d\ll 1$. Then there exists $p$ pairs of $(\nu_q^d,\theta_q^d)\in(0,\infty)\times (0,\pi)$ such that
\begin{equation}\label{4.8}
\left\{(\nu,\theta):\nu\ge0,\;\theta\in[0,2\pi), \;\mathcal S^d(\nu,\theta)\ne \{\bm 0\}\right\}=\left\{(\nu_q^d,\theta_q^d)\right\}_{q=1}^p,
\end{equation}
where
\begin{equation*}
\mathcal S^d(\nu,\theta):=\{\bm \varphi: \bm H(d, \nu,\theta,\bm\varphi)=\bm 0\}
\end{equation*}
with $\bm H(d,\nu,\theta, \bm\varphi)$ defined in \eqref{Deltaiv}.
Moreover, denoting $\mathcal S^d_q=S^d(\nu_q^d,\theta_q^d)$ for any $q=1,\cdots,p$, we have $\mathcal S^d_q=\{c\bm\varphi_q^d:c\in\mathbb C\}$, and
\begin{equation*}\label{limd}
\lim_{d\to 0} \nu_q^d= \nu_q^0=u_q^0\sqrt{(b_{q}^{0})^2-(a_{q}^{0})^2}, \;\;\lim_{d\to0} \theta_q^d= \theta_q^0=\arccos(-a_{q}^{0}/b_{q}^{0}) \;\;\text{and}\;\;\lim_{d\to0}\bm \varphi_q^d=\bm \varphi_q^0,
\end{equation*}
 where $\nu_q^0$, $\theta_q^0$ and $\bm \varphi_q^0$ are defined in Lemma \ref{limz}.
\end{lemma}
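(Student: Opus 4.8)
The plan is to view the identity in \eqref{4.8} as a small-$d$ perturbation of the $d=0$ picture provided by Lemma \ref{limz}, and to produce each pair $(\nu_q^d,\theta_q^d)$ together with its eigenvector by the implicit function theorem, one $q\in\{1,\dots,p\}$ at a time. I would normalize the eigenvector by prescribing its $q$-th component: seek $\bm\varphi=\bm\varphi_q^0+\bm\phi$ with $\bm\phi=(\phi_1,\dots,\phi_n)^T$ and $\phi_q=0$, so that the unknowns are $\bm\phi\in\{\bm x\in\mathbb C^n:x_q=0\}$ together with $(\nu,\theta)\in\mathbb R^2$, which matches the number of (real) scalar equations in $\bm H(d,\nu,\theta,\bm\varphi)=\bm 0$. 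By Lemma \ref{asymp}$(i)$ and $\bf(H1)$ the data $u_j^d$, $a_j^d$, $b_j^d$ and $f_j(u_j^d,u_j^d)$ are $C^1$ in $d$ near $d=0$, so $\bm H$ is a $C^1$ map of $(d,\bm\phi,\nu,\theta)$, and Lemma \ref{limz} gives $\bm H(0,\nu_q^0,\theta_q^0,\bm\varphi_q^0)=\bm 0$.

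The heart of the argument is invertibility of the Fr\'echet derivative of $\bm H$ with respect to $(\bm\phi,\nu,\theta)$ at $(0,\bm 0,\nu_q^0,\theta_q^0)$. Since $dA=0$ and $f_j(u_j^0,u_j^0)=0$, at $d=0$ the operator $\bm H(0,\nu,\theta,\cdot)$ is multiplication by the diagonal matrix with entries $c_j(\nu,\theta):=u_j^0a_j^0+u_j^0b_j^0e^{-{\rm i}\theta}-{\rm i}\nu$; by \eqref{tauthetanu} one has $c_q(\nu_q^0,\theta_q^0)=0$, while $c_j(\nu_q^0,\theta_q^0)\ne0$ for every $j\ne q$ (for $j\le p$ this uses the isolation condition \eqref{isolp}, and for $j>p$ it uses $a_j^0-b_j^0<0$ from $\bf(H2)$ together with $a_j^0+b_j^0<0$). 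Writing out the derivative shows it is block diagonal: on each $\phi_j$-direction ($j\ne q$) it is multiplication by $c_j(\nu_q^0,\theta_q^0)\ne0$, hence a bijection, while the $q$-th component of $\bm H$ equals $c_q(\nu,\theta)$, whose derivative in $(\nu,\theta)$ is the pair $(-{\rm i},-{\rm i}u_q^0b_q^0e^{-{\rm i}\theta_q^0})$, which is $\mathbb R$-linearly independent in $\mathbb C$ because $b_q^0\ne0$ and $\sin\theta_q^0\ne0$ by \eqref{sab} and $\theta_q^0\in(0,\pi)$. The implicit function theorem then yields $\tilde d_q>0$ and a $C^1$ branch $d\mapsto(\bm\phi_q^d,\nu_q^d,\theta_q^d)$ on $[0,\tilde d_q)$ converging to $(\bm 0,\nu_q^0,\theta_q^0)$; setting $\bm\varphi_q^d=\bm\varphi_q^0+\bm\phi_q^d$ gives the asserted limits, and shrinking $\tilde d_q$ keeps $(\nu_q^d,\theta_q^d)\in(0,\infty)\times(0,\pi)$ since $\nu_q^0>0$ and $\theta_q^0\in(0,\pi)$. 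That $\mathcal S_q^d$ is one-dimensional follows because $0$ is an algebraically simple eigenvalue of the diagonal matrix $\bigl(c_j(\nu_q^0,\theta_q^0)\bigr)_j$, and algebraic simplicity is preserved under the small perturbation $dA+\operatorname{diag}(\cdots)$, so along the branch the eigenvalue $0$ stays simple.

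Finally I would show these $p$ branches exhaust the solution set, i.e. prove the equality in \eqref{4.8}, by a compactness–contradiction argument. If it failed, there would be $d_k\to0$ and solutions $(\nu^{d_k},\theta^{d_k},\bm\varphi^{d_k})$ of $\bm H(d_k,\cdot)=\bm 0$ with $\|\bm\varphi^{d_k}\|_2=1$, $\bm\varphi^{d_k}\ne\bm 0$, whose pairs $(\nu^{d_k},\theta^{d_k})$ are not among $\{(\nu_q^{d_k},\theta_q^{d_k})\}_{q=1}^p$. By Lemma \ref{3nu} the sequence $\{\nu^{d_k}\}$ is bounded, so along a subsequence $(\nu^{d_k},\theta^{d_k},\bm\varphi^{d_k})\to(\nu^*,\theta^*,\bm\varphi^*)$ with $\|\bm\varphi^*\|_2=1$; passing to the limit gives $\bm H(0,\nu^*,\theta^*,\bm\varphi^*)=\bm 0$ with $\bm\varphi^*\ne\bm 0$, so by Lemma \ref{limz} we have $(\nu^*,\theta^*)=(\nu_q^0,\theta_q^0)$ and $\bm\varphi^*$ a scalar multiple of $\bm\varphi_q^0$ for some $q\le p$. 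Then, for large $k$, $(\nu^{d_k},\theta^{d_k})$ lies in the neighborhood of $(\nu_q^0,\theta_q^0)$ with eigenvector (after a unit-modulus rescaling) near $\bm\varphi_q^0$, so the uniqueness part of the implicit function theorem forces it to coincide with the $q$-th branch, contradicting the assumption. Since the limits $(\nu_q^0,\theta_q^0)$ are distinct by \eqref{isolp}, the $p$ branches are mutually distinct for small $d$. The main obstacle is the Fr\'echet-derivative computation — concretely, that $(\nu,\theta)\mapsto c_q(\nu,\theta)$ is a local diffeomorphism near $(\nu_q^0,\theta_q^0)$, which is exactly where $\bf(H2)$ (through $b_q^0\ne0$ and $\theta_q^0\in(0,\pi)$) is used; the global count then rests on combining this with the \emph{a priori} bound of Lemma \ref{3nu} and the $d=0$ classification of Lemma \ref{limz}.
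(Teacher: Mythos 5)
Your proposal is correct and follows essentially the same route as the paper: the same decomposition $\mathbb{C}^n=\operatorname{span}\{\bm\varphi_q^0\}\oplus\{x\in\mathbb{C}^n:x_q=0\}$, the same implicit-function-theorem setup with the same Fr\'echet derivative (block-diagonal with the invertible $(\nu,\theta)$-block coming from $(-{\rm i},-{\rm i}u_q^0b_q^0e^{-{\rm i}\theta_q^0})$), and the same compactness--contradiction argument via Lemma \ref{3nu} and Lemma \ref{limz} for the exhaustion claim \eqref{4.8}. The only cosmetic differences are that you justify $\dim\mathcal S_q^d=1$ by persistence of a simple eigenvalue where the paper invokes upper semicontinuity of $\dim\mathcal S_q^d$, and you spell out the final uniqueness step that the paper delegates to a citation.
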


\begin{proof}
First, we show the existence. Here, we will only show the existence of $(\nu_1^d,\theta_1^d)$, and the others could be obtained similarly.
Let $$Y_1:=\{\bm x=(x_1,\cdots,x_n)^T\in\mathbb {C}^n:x_1=0\},$$
 and consequently $\mathbb C^n={\rm span}\{\bm\varphi_1^0\}\oplus Y_1$. Let
$$\bm H_1(d, \nu,\theta,\bm \xi_1):=\bm H(d, \nu,\theta,\bm\varphi^0_1+\bm \xi_1):\mathbb R^3\times Y_1\to \mathbb C^n.$$
Clearly, we have $\bm H_1(0,\nu_1^0,\theta_1^0,\bm 0)=\bm 0$, and the Fr\'echet derivative of $\bm H_1$ with respect to
$(\nu,\theta,\bm \xi_1)$ at $(0,\nu_1^0,\theta_1^0,\bm 0)$ is
\begin{equation*}
\begin{split}
D_{(\nu,\theta,\bm \xi_1)}\bm H_1(0,\nu_1^0,\theta_1^0,\bm 0)[\vartheta,\epsilon,\bm \chi]=\left(\begin{array}{c}
\ds -{\rm i} e^{-{\rm i} \theta_1^0}b_{1}^{0}u_1^0\epsilon-{\rm i}\vartheta
\\
\ds \left(a_{2}^{0}u_2^0+b_{2}^{0}u_2^0e^{-{\rm i} \theta_1^0} -{\rm i} \nu_1^0\right) \chi_2\\
\vdots\\
\ds \left(a_{n}^{0}u_n^0+ b_{n}^{0}u_n^0e^{-{\rm i} \theta_1^0} -{\rm i} \nu_1^0\right) \chi_n\\
\end{array}\right),
\end{split}
\end{equation*}
where $\vartheta,\epsilon\in\mathbb R$ and $\bm \chi=(\chi_1,\cdots,\chi_n)\in Y_1$. Note from \eqref{isolp} that $D_{(\nu,\theta,\bm \xi)}\bm H_1(0,\nu_1^0,\theta_1^0,\bm 0)$ is a bijection.
Then from the implicit function theorem, there exists a constant $\delta>0$, a neighborhood $N_1$ of $(\nu_1^0,\theta_1^0,\bm 0)$ and a continuously differentiable function
$$
(\nu_1^d,\theta_1^d,\bm \xi_1^d):[0,\delta)\mapsto N_1
$$
such that for any $d\in [0,\delta)$, the unique solution of $\bm H_1(d, \nu,\theta,\bm \xi_1)=\bm 0$ in the neighborhood $N_1$ is $(\nu_1^d,\theta_1^d,\bm \xi_1^d)$. Letting $\bm \varphi_1^d=\bm \varphi_1^0+\bm \xi_1^d$, we see that
 \begin{equation}\label{SSp}
\textrm{span}(\bm \varphi_1^d)\subset \mathcal S^d_1 \;\;\text{for any}\;\; d\in[0,\delta).
\end{equation}
Since the dimension of $\mathcal S^d_1$ is upper semicontinuous, then there exists $\delta_1<\delta$
such that $\dim \mathcal S_1^d\le 1$ for any $d\in[0,\delta_1)$. This, together with \eqref{SSp}, implies that
$\mathcal S^d_1=\{c\bm\varphi_1^d:c\in\mathbb C\}$. By \eqref{tauthetanu}, we see that $(\nu_q^0,\theta_q^0)\in(0,\infty)\times (0,2\pi)$, which yields $(\nu_q^d,\theta_q^d)\in(0,\infty)\times (0,2\pi)$ for
$0<d\ll1$.
This completes the part of existence.

Now we show that \eqref{4.8} holds.
If it is not true, then there exist sequences $\{d_j\}_{j=1}^\infty$
and $\left\{\left(\nu^{d_j},\theta^{d_j},\bm \varphi^{d_j}\right)\right\}_{j=1}^\infty$
such that
$\lim_{j\to\infty}d_j=0$, and for each $j=1,2,\cdots,$ $\left(\nu^{d_j},\theta^{d_j}\right)\ne (\nu_q^{d_j},\theta_q^{d_j})(q=1,\cdots,p)$, $\left\|\bm \varphi^{d_j}\right\|_2=1$, $\nu^{d_j}>0$, $\theta^{d_j}\in[0,2\pi)$, and
$$\bm H\left(d_j, \nu^{d_j},\theta^{d_j},\bm \varphi^{d_j}\right)=\bm 0.$$
By Lemma \ref{3nu}, we see that $\{\nu^{d_j}\}$ is bounded.
Using similar arguments as in the proof of \cite[Lemma 3.4]{ChenHopf}, we show that
there exists $1\le q_0\le p$ such that  $\left(\nu^{d_j},\theta^{d_j}\right)=(\nu^{d_j}_{q_0},\theta^{d_j}_{q_0})$ for sufficiently large $j$. This is a contradiction. Therefore, \eqref{4.8} holds.
\end{proof}

From Lemma \ref{l4.4}, we obtain the following result.
\begin{theorem}\label{3c25}
Assume that $\bf(H0)$-$\bf(H2)$ and \eqref{isolp} hold, and $d\in(0,\tilde d)$, where $0<\tilde d\ll 1$. Then $(\nu,\tau,\bm \varphi)$ solves
\begin{equation*}
\begin{cases}
\Delta(d,{\rm i}\nu,\tau)\bm \varphi=\bm0,\\
\nu>0,\;\tau\ge0,\;\bm \varphi (\ne \bm0) \in \mathbb C^n,\\
\end{cases}
\end{equation*}
if and
only if there exists $1\le q\le p$ such that
\begin{equation}\label{3par}
\nu=\nu^d_q,\;\bm \varphi= c{\bm \varphi}^d_q,\;
\tau=\tau^d_{q,l}=\frac{\theta^d_q+2l\pi}{\nu^d_q},\;\; l=0,1,2,\cdots,
\end{equation}
where $\nu^d_q$, $\theta^d_q$, and ${\bm \varphi}^d_q$ are defined in Lemma \ref{l4.4}.
\end{theorem}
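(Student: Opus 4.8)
The plan is to reduce the eigenvalue problem for $\Delta(d,{\rm i}\nu,\tau)$ directly to the solvability statement of Lemma~\ref{l4.4}. First I would observe that, by the definition of $\bm H$ in \eqref{Deltaiv}, the only dependence of $\Delta(d,{\rm i}\nu,\tau)$ on $\tau$ is through the factor $e^{-{\rm i}\nu\tau}$; hence, writing $\theta\in[0,2\pi)$ for the unique representative of $\nu\tau$ modulo $2\pi$, one has $\Delta(d,{\rm i}\nu,\tau)\bm\varphi=\bm H(d,\nu,\theta,\bm\varphi)$. Therefore $(\nu,\tau,\bm\varphi)$ with $\nu>0$, $\tau\ge0$ and $\bm\varphi\ne\bm0$ solves the system in the statement if and only if $\bm\varphi\in\mathcal S^d(\nu,\theta)\setminus\{\bm0\}$ and $\nu\tau=\theta+2l\pi$ for some integer $l\ge0$.

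Next I would apply Lemma~\ref{l4.4}: for $0<d\ll1$ the set of pairs $(\nu,\theta)$ with $\nu\ge0$, $\theta\in[0,2\pi)$ and $\mathcal S^d(\nu,\theta)\ne\{\bm0\}$ is exactly $\{(\nu_q^d,\theta_q^d)\}_{q=1}^p$, each lying in $(0,\infty)\times(0,\pi)$, and $\mathcal S^d(\nu_q^d,\theta_q^d)=\{c\bm\varphi_q^d:c\in\mathbb C\}$. Since we require $\nu>0$, this forces $(\nu,\theta)=(\nu_q^d,\theta_q^d)$ for some $1\le q\le p$, and then $\bm\varphi=c\bm\varphi_q^d$ for some $c\ne0$. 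Using $\nu_q^d>0$, the relation $\nu_q^d\tau=\theta_q^d+2l\pi$ determines $\tau$ uniquely for each $l$, namely $\tau=\tau^d_{q,l}=(\theta_q^d+2l\pi)/\nu_q^d$, and since $\theta_q^d>0$ every such value is nonnegative, so all $l=0,1,2,\dots$ are admissible. This establishes the ``only if'' direction. The ``if'' direction is immediate: for a triple of the form \eqref{3par} one has $\bm H(d,\nu_q^d,\theta_q^d,\bm\varphi_q^d)=\bm0$ by Lemma~\ref{l4.4}, and $e^{-{\rm i}\nu_q^d\tau^d_{q,l}}=e^{-{\rm i}(\theta_q^d+2l\pi)}=e^{-{\rm i}\theta_q^d}$, whence $\Delta(d,{\rm i}\nu_q^d,\tau^d_{q,l})(c\bm\varphi_q^d)=\bm0$.

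Once Lemma~\ref{l4.4} is in hand the argument is essentially bookkeeping, so I do not anticipate a genuine obstacle; the only point requiring some care is the passage between the angular variable $\theta\in[0,2\pi)$ and the delay $\tau\ge0$. One must use that $\nu_q^d>0$, so that $\tau$ is recovered unambiguously from $\theta_q^d$ and the winding number $l$, and that $\theta_q^d\ne0$, so that no solution with $\tau=0$ is lost or double-counted. Both facts are contained in the inclusion $(\nu_q^d,\theta_q^d)\in(0,\infty)\times(0,\pi)$ of Lemma~\ref{l4.4}, which in turn rests on $\bf(H2)$ through \eqref{sab}.
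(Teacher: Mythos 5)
Your proposal is correct and follows exactly the route the paper intends: the paper states Theorem \ref{3c25} as an immediate consequence of Lemma \ref{l4.4} (with no written proof), and your argument simply makes explicit the bookkeeping between $\Delta(d,{\rm i}\nu,\tau)$ and $\bm H(d,\nu,\theta,\cdot)$ via $\theta\equiv\nu\tau\pmod{2\pi}$, together with the facts $\nu_q^d>0$ and $\theta_q^d\in(0,\pi)$ from Lemma \ref{l4.4}.
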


Then we show that the purely imaginary eigenvalue is simple.

\begin{theorem}\label{smallsimpleeigenvalue}
Assume that $\bf(H0)$-$\bf(H2)$ and \eqref{isolp} hold. Then, for each $d\in(0,\tilde d)$, where $0<\tilde d\ll 1$, $\mu={\rm i} \nu_{q}^{d}$ is a simple eigenvalue of $A_{\tau^d_{q,l}}(d)$ for $q=1, \cdots, p$ and $l=0, 1, 2, \cdots.$
\end{theorem}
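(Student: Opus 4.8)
The plan is to reproduce, \emph{mutatis mutandis}, the argument used to prove Theorem~\ref{simpleeigenvalue}, now perturbing off $d=0$ rather than off $d_*$, and invoking Lemma~\ref{l4.4} and Theorem~\ref{3c25} in place of Theorems~\ref{uniquesolution} and~\ref{solutionvtaupsi}. Fix $q\in\{1,\dots,p\}$ and $l\in\{0,1,2,\dots\}$. By Theorem~\ref{3c25} we have $\mathscr{N}\!\left[A_{\tau^d_{q,l}}(d)-{\rm i}\nu^d_q\right]=\operatorname{span}\!\left[e^{{\rm i}\nu^d_q\theta}\bm\varphi^d_q\right]$, which is one–dimensional, so it suffices to prove
\begin{equation*}
\mathscr{N}\!\left[A_{\tau^d_{q,l}}(d)-{\rm i}\nu^d_q\right]^{2}=\mathscr{N}\!\left[A_{\tau^d_{q,l}}(d)-{\rm i}\nu^d_q\right];
\end{equation*}
iterating this then gives $\mathscr{N}[\,\cdot\,]^{j}=\mathscr{N}[\,\cdot\,]$ for all $j\ge1$, i.e. ${\rm i}\nu^d_q$ is a simple eigenvalue of $A_{\tau^d_{q,l}}(d)$.

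First I would take $\bm\phi\in\mathscr{N}[A_{\tau^d_{q,l}}(d)-{\rm i}\nu^d_q]^{2}$, so that $[A_{\tau^d_{q,l}}(d)-{\rm i}\nu^d_q]\bm\phi=\gamma\,e^{{\rm i}\nu^d_q\theta}\bm\varphi^d_q$ for some $\gamma\in\mathbb C$. This is a linear nonhomogeneous ODE on $[-\tau^d_{q,l},0]$ coupled with the domain condition defining $A_{\tau^d_{q,l}}(d)$; solving it exactly as in \eqref{dotphi}--\eqref{Deltaphi1} gives $\bm\phi(\theta)=\bm\phi(0)e^{{\rm i}\nu^d_q\theta}+\gamma\theta e^{{\rm i}\nu^d_q\theta}\bm\varphi^d_q$ together with
\begin{equation*}
\Delta\!\left(d,{\rm i}\nu^d_q,\tau^d_{q,l}\right)\bm\phi(0)=\gamma\left(\bm\varphi^d_q+\tau^d_{q,l}\,e^{-{\rm i}\theta^d_q}\operatorname{diag}(u^d_j b^d_j)\bm\varphi^d_q\right).
\end{equation*}
Letting $\widetilde{\bm\varphi}^d_q$ be the eigenvector of $\widetilde\Delta(d,{\rm i}\nu^d_q,\tau^d_{q,l})$ for the eigenvalue $0$ (it exists by the same observation as in Proposition~\ref{p}, since $\widetilde\Delta$ is the conjugate transpose of $\Delta$) and pairing the last identity with $\widetilde{\bm\varphi}^d_q$, the left–hand side vanishes and we are left with $\gamma\,\widetilde S^d_{q,l}=0$, where
\begin{equation*}
\widetilde S^d_{q,l}:=\sum_{j=1}^{n}\overline{\widetilde\varphi^d_{q,j}}\,\varphi^d_{q,j}+\tau^d_{q,l}\,e^{-{\rm i}\theta^d_q}\sum_{j=1}^{n}u^d_j b^d_j\,\overline{\widetilde\varphi^d_{q,j}}\,\varphi^d_{q,j}.
\end{equation*}

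The main obstacle is to show $\widetilde S^d_{q,l}\ne0$ for $0<d\ll1$, which is where the perturbation from $d=0$ enters. At $d=0$ the matrix $\Delta(0,{\rm i}\nu^0_q,\tau^0_{q,l})$ is diagonal with a simple zero entry in position $q$, so its conjugate transpose is again diagonal and $\widetilde{\bm\varphi}^0_q=\bm\varphi^0_q$ equals the $q$-th standard basis vector; applying the implicit function theorem to the adjoint problem exactly as in Lemma~\ref{l4.4} yields $\widetilde{\bm\varphi}^d_q\to\bm\varphi^0_q$ and $\bm\varphi^d_q\to\bm\varphi^0_q$ as $d\to0$. Using in addition $\nu^d_q\to\nu^0_q$, $\theta^d_q\to\theta^0_q$, $\tau^d_{q,l}\to\tau^0_{q,l}=(\theta^0_q+2l\pi)/\nu^0_q$, $u^d_j b^d_j\to u^0_j b^0_j$ (Lemma~\ref{l4.4}), and the identities $b^0_q\cos\theta^0_q=-a^0_q$ and $u^0_q b^0_q\sin\theta^0_q=-\nu^0_q$ coming from \eqref{tauthetanu} (equivalently, from the proof of Lemma~\ref{limz}), so that $e^{-{\rm i}\theta^0_q}u^0_q b^0_q=-u^0_q a^0_q+{\rm i}\nu^0_q$, I obtain
\begin{equation*}
\lim_{d\to0}\widetilde S^d_{q,l}=1-\frac{(\theta^0_q+2l\pi)\,u^0_q a^0_q}{\nu^0_q}+{\rm i}\,(\theta^0_q+2l\pi)\neq0,
\end{equation*}
the imaginary part being strictly positive because $\theta^0_q\in(0,\pi)$; moreover the coefficient of $\tau^d_{q,l}$ in $\widetilde S^d_{q,l}$ converges to the nonzero number $-u^0_q a^0_q+{\rm i}\nu^0_q$, so the estimate is uniform in $l$. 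Consequently there is $\tilde d_q>0$ with $\widetilde S^d_{q,l}\ne0$ for all $d\in(0,\tilde d_q)$ and all $l$; setting $\tilde d:=\min_{1\le q\le p}\tilde d_q$ forces $\gamma=0$, hence $\bm\phi\in\mathscr{N}[A_{\tau^d_{q,l}}(d)-{\rm i}\nu^d_q]$, which gives the kernel identity and therefore the simplicity of ${\rm i}\nu^d_q$ for every $q=1,\dots,p$ and $l=0,1,2,\dots$.
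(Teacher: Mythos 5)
Your proposal is correct and follows essentially the same route as the paper's proof: reduce to showing the generalized kernel collapses, pair the resulting identity with the adjoint eigenvector $\widetilde{\bm\varphi}^d_q$, and rule out $\gamma\neq 0$ by showing $S_q(d)\neq 0$ for $0<d\ll 1$ via the limit $d\to 0$. In fact you go slightly further than the paper, which only asserts $\lim_{d\to 0}S_q(d)\neq 0$: your explicit evaluation $1-\frac{(\theta^0_q+2l\pi)u^0_q a^0_q}{\nu^0_q}+{\rm i}(\theta^0_q+2l\pi)$ (using $e^{-{\rm i}\theta^0_q}u^0_q b^0_q=-u^0_q a^0_q+{\rm i}\nu^0_q$) and your remark on uniformity in $l$ are correct and fill in details the paper leaves implicit.
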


\begin{proof}
It follows from Theorem \ref{3c25} that $$\mathscr{N}\left[A_{\tau^d_{q,l}}(d)-{\rm i} \nu_{q}^{d}\right]=\operatorname{span}\left[e^{{\rm i} \nu_{q}^{d} \theta}\bm \varphi_{q}^{d} \right],$$ where $\theta\in\left[-\tau^d_{q,l},0\right]$, and $\bm \varphi_{q}^{d}$ is defined in Theorem \ref{3c25}. Then, we will show that
$$\mathscr{N}\left[A_{\tau^d_{q,l}}(d)-{\rm i} \nu_{q}^{d}\right]^{2}=\mathscr{N}\left[A_{\tau^d_{q,l}}(d)-{\rm i} \nu_{q}^{d}\right].$$
If $\bm \phi \in \mathscr{N}\left[A_{\tau^d_{q,l}}(d)-{\rm i} \nu_{q}^{d}\right]^{2}$, then
$$
\left[A_{\tau^d_{q,l}}(d)-{\rm i} \nu_{q}^{d}\right]\bm \phi \in \mathscr{N}\left[A_{\tau^d_{q,l}}(d)-{\rm i} \nu_{q}^{d}\right]=\operatorname{span}\left[e^{{\rm i} \nu_{q}^{d} \theta}\bm \varphi_{q}^{d} \right],
$$
and consequently, there exists a constant $\gamma$ such that
$$
\left[A_{\tau^d_{q,l}}(d)-{\rm i} \nu_{q}^{d}\right]\bm \phi=\gamma e^{{\rm i} \nu_{q}^{d} \theta}\bm \varphi_{q}^{d},
$$
which yields
\begin{equation}\label{smalldotphi}
\begin{aligned}
\dot{\bm\phi}(\theta) =&{\rm i} \nu_{q}^{d}\bm \phi(\theta)+\gamma e^{{\rm i} \nu_{q}^{d} \theta}\bm \varphi_{q}^{d}, \quad \theta \in\left[-\tau^d_{q,l}, 0\right], \\
\dot{\bm\phi}(0)=&dA\bm \phi(0)+ \operatorname{diag} \left(f_j (u^d_{j},u^d_{j})\right)\bm \phi(0) + \operatorname{diag}\left(u^d_{j} a_{j}^{d}\right)\bm\phi(0)\\
&+ \operatorname{diag}\left(u^d_{j} b_{j}^{d}\right)\bm \phi(-\tau^d_{q,l}).
\end{aligned}
\end{equation}
From the first equation of Eq. \eqref{smalldotphi}, we have
\begin{equation}\label{smallphi}
\begin{aligned} \bm\phi(\theta) &=\bm\phi(0) e^{{\rm i} \nu_{q}^{d} \theta}+\gamma \theta e^{{\rm i} \nu_{q}^{d} \theta}\bm \varphi_{q}^{d}, \\
\dot{\bm\phi}(0) &={\rm i} \nu_{q}^{d}\bm \phi(0)+\gamma \bm \varphi_{q}^{d}.
\end{aligned}
\end{equation}
Then it follows from Eqs. \eqref{smalldotphi} and \eqref{smallphi} that
\begin{equation}\label{smallDeltaphi1}
\begin{aligned}
\Delta\left(d, {\rm i} \nu_{q}^{d}, \tau^d_{q,l}\right)\bm \phi(0)=&dA\bm \phi(0)+ \operatorname{diag} \left(f_j (u^d_{j},u^d_{j})\right)\bm \phi(0) + \operatorname{diag}\left(u^d_{j} a_{j}^{d}\right)\bm\phi(0)\\
&+ e^{{-\rm i} \theta_{q}^{d}}\operatorname{diag}\left(u^d_{j} b_{j}^{d}\right)\bm\phi(0) - {\rm i}\nu_{q}^{d} \bm \phi(0)\\
=& \gamma \left(\bm\varphi_{q}^{d}+ \tau^d_{q,l} e^{{-\rm i} \theta_{q}^{d}}\operatorname{diag}(u^d_{j} b_{j}^{d})\bm \varphi_{q}^{d}\right).
\end{aligned}
\end{equation}

Let $\widetilde{\Delta}\left(d, {\rm i} {\nu_{q}^d}, {\tau}^{d}_{q,l}\right)$ be the conjugate transpose matrix of $\Delta\left(d, {\rm i} \nu_{q}^{d}, \tau^d_{q,l}\right)$, and
let $\widetilde{\bm\varphi}^d_{q}=(\widetilde\varphi_{q,1}^{d},\cdots, \widetilde\varphi_{q,n}^{d})^T$ be the
 the corresponding eigenvector of $\widetilde{\Delta}\left(d, {\rm i} {\nu_{q}^d}, {\tau}^{d}_{q,l}\right)$ with respect to eigenvalue $0$.
Then, using similar arguments as in the proof of Proposition \ref{p}, we see that, ignoring a scalar factor,  $\widetilde{\bm\varphi}^d_{q}$ satisfies
\begin{equation}\label{limwide}
\lim_{d\to0}\widetilde{\bm\varphi}^d_{q}=\bm\varphi^0_{q},
\end{equation}
where $\bm \varphi^0_{q}$ is defined in Lemma \ref{limz}.
Multiplying both sides of \eqref{smallDeltaphi1} by $(\overline{\widetilde\varphi}_{q,1}^{d},\cdots, \overline{\widetilde\varphi}_{q,n}^{d})$ to the left, we have
$$
\begin{aligned}
0 &=\left\langle  \widetilde \Delta\left(d, {\rm i} \nu_{q}^{d},\tau^d_{q,l}\right)\widetilde{\bm \varphi}_{q}^{d}, \bm \phi(0)\right\rangle=\left\langle\widetilde{\bm \varphi}_{q}^{d}, \Delta\left({d}, {\rm i} \nu_{q}^{d}, \tau^d_{q,l}\right)\bm \phi(0)\right\rangle\\
& = \gamma\left(\sum_{j=1}^n\overline{\widetilde\varphi}^d_{q,j}\varphi_{q,j}^{d}+ \tau^d_{q,l} e^{{-\rm i} \theta_{q}^{d}} \sum_{j=1}^n u^d_{j}b_{j}^{d}\overline{\widetilde\varphi}_{q,j}^{d}\varphi^d_{q,j}\right):=\gamma S_{q}(d).
\end{aligned}
$$
It follows from Lemma \ref{l4.4}, Theorem \ref{3c25} and Eq. \eqref{limwide} that
\begin{equation*}
\displaystyle\lim_{d\to0}S_{q}(d)\ne 0.
\end{equation*}
which implies that $\gamma=0$ for $d\in(0,\tilde d)$, where $0<\tilde d\ll 1$,
and consequently, ${\rm i}\nu_{q}^{d}$ is a simple eigenvalue of
$A_{\tau^d_{q,l}}(d)$ for $q=1, \cdots, p$ and $l=0, 1, 2, \cdots$.
\end{proof}

By Theorem \ref{smallsimpleeigenvalue} and the implicit function theorem, we see that, for each $q=1,\cdots,p$ and $l=0,1,2,\cdots$, there exists a neighborhood $O_{q,l}\times D_{q,l}\times H_{q,l}$ of $({\tau^d_{q,l}},\textrm{i}\nu_q^{d},{\bm \varphi}_q^{d})$ and a continuously
differentiable function $(\mu(\tau),\bm\varphi(\tau)):O_{q,l}\rightarrow D_{q,l}\times H_{q,l}$ such that $
\mu(\tau^d_{q,l})=\textrm{i}\nu_q^d$, $\bm \varphi(\tau^d_{q,l})={\bm \varphi}_q^d$, and for each $\tau\in O_{q,l}$, the only eigenvalue of $A_{\tau}(d)$ in $D_{q,l}$ is $\mu(\tau),$ and
\begin{equation}\label{smallDeltapsi}
\begin{split}
\Delta(d,\mu(\tau),\tau)\bm\varphi(\tau)=&dA\bm \varphi(\tau)+ \operatorname{diag} \left(f_j (u^d_{j},u^d_{j})\right)\bm \varphi(\tau) + \operatorname{diag}\left(u^d_{j} a_{j}^{d}\right)\bm\varphi(\tau)\\
&+ e^{-\mu(\tau) \tau}\operatorname{diag}\left(u^d_{j} b_{j}^{d}\right)\bm\varphi(\tau) - \mu(\tau) \bm \varphi(\tau)=\bm 0.
\end{split}
\end{equation}
Then, using similar arguments as Theorem \ref{Remu}, we obtain the following transversality condition.

\begin{theorem}\label{smallRemu}
Assume that $\bf(H0)$-$\bf(H2)$ and \eqref{isolp} hold. Then
$$
\frac{d \mathcal{R} e\left[\mu\left(\tau^d_{q,l}\right)\right]}{d \tau}>0, \;\; q=1,\cdots, p,\;\; l=0,1,2,\cdots.
$$
\end{theorem}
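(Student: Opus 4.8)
The plan is to transcribe the proof of Theorem~\ref{Remu}, replacing the limit $d\to d_*$ by $d\to0$; since $\nu^0_q>0$ and $u^0_q>0$ do not degenerate here, no rescaling of the derivative will be needed. Fix $q\in\{1,\dots,p\}$ and $l\ge0$. First I would differentiate the defining identity \eqref{smallDeltapsi} in $\tau$ at $\tau=\tau^d_{q,l}$, obtaining the exact analogue of \eqref{dmu}. Pairing that equation on the left with the adjoint eigenvector $\widetilde{\bm\varphi}^d_q$ constructed before \eqref{smallDeltapsi} annihilates the term $\Delta(d,{\rm i}\nu^d_q,\tau^d_{q,l})\,d\bm\varphi/d\tau$, because $\langle\widetilde{\bm\varphi}^d_q,\Delta(d,{\rm i}\nu^d_q,\tau^d_{q,l})\bm z\rangle=\langle\widetilde\Delta(d,{\rm i}\nu^d_q,\tau^d_{q,l})\widetilde{\bm\varphi}^d_q,\bm z\rangle=0$. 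Solving for the derivative gives
\begin{equation*}
\frac{d\mu(\tau^d_{q,l})}{d\tau}=\frac{-{\rm i}\nu^d_q e^{-{\rm i}\theta^d_q}\sum_{j=1}^n u^d_jb^d_j\overline{\widetilde\varphi}^d_{q,j}\varphi^d_{q,j}}{S_q(d)},
\end{equation*}
with $S_q(d)$ the scalar already defined in the proof of Theorem~\ref{smallsimpleeigenvalue}.

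I would then multiply numerator and denominator by $\overline{S_q(d)}$, producing a factor $|S_q(d)|^{-2}$ and splitting the numerator into the two products appearing in Theorem~\ref{Remu}. The product carrying $\tau^d_{q,l}$ equals $-{\rm i}\nu^d_q\tau^d_{q,l}\bigl|\sum_j u^d_jb^d_j\overline{\widetilde\varphi}^d_{q,j}\varphi^d_{q,j}\bigr|^2$, which is purely imaginary and so contributes nothing to $\mathcal{R}e[d\mu/d\tau]$; the remaining product is $l$-independent, since $e^{-{\rm i}\nu^d_q\tau^d_{q,l}}=e^{-{\rm i}\theta^d_q}$ makes $\bm\varphi^d_q$ and $\widetilde{\bm\varphi}^d_q$ independent of $l$. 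Hence $\mathcal{R}e[d\mu(\tau^d_{q,l})/d\tau]=|S_q(d)|^{-2}\,\nu^d_q\,\mathcal{I}m(\overline{A_q(d)}\,B_q(d))$, where $A_q(d)=\sum_j\overline{\widetilde\varphi}^d_{q,j}\varphi^d_{q,j}$ and $B_q(d)=e^{-{\rm i}\theta^d_q}\sum_j u^d_jb^d_j\overline{\widetilde\varphi}^d_{q,j}\varphi^d_{q,j}$ both have $l$-free limits. Passing to $d\to0$ via Lemma~\ref{l4.4}, Theorem~\ref{3c25} and \eqref{limwide} gives $\bm\varphi^d_q\to\bm\varphi^0_q$, $\widetilde{\bm\varphi}^d_q\to\bm\varphi^0_q$, $\nu^d_q\to\nu^0_q$, $\theta^d_q\to\theta^0_q$, $u^d_j\to u^0_j$, $b^d_j\to b^0_j$; since $\bm\varphi^0_q$ has only its $q$-th entry nonzero, every sum collapses to its $j=q$ term and $\overline{A_q(d)}B_q(d)\to u^0_qb^0_q e^{-{\rm i}\theta^0_q}$.

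The one genuinely new computation is the sign. Using $\theta^0_q=\arccos(-a^0_q/b^0_q)\in(0,\pi)$, hence $\sin\theta^0_q=\sqrt{(b^0_q)^2-(a^0_q)^2}/|b^0_q|>0$, together with $\nu^0_q=u^0_q\sqrt{(b^0_q)^2-(a^0_q)^2}$ and, decisively, $b^0_q<0$ from \eqref{sab}, one obtains $\nu^0_q\,\mathcal{I}m(u^0_qb^0_q e^{-{\rm i}\theta^0_q})=-\nu^0_q u^0_qb^0_q\sin\theta^0_q=(\nu^0_q)^2>0$. Since $\lim_{d\to0}S_q(d)\ne0$ by Theorem~\ref{smallsimpleeigenvalue}, this yields
\begin{equation*}
\lim_{d\to0}\frac{d\mathcal{R}e[\mu(\tau^d_{q,l})]}{d\tau}=\frac{(\nu^0_q)^2}{\bigl|\lim_{d\to0}S_q(d)\bigr|^2}>0,
\end{equation*}
and a continuity argument then gives $d\mathcal{R}e[\mu(\tau^d_{q,l})]/d\tau>0$ for all $d\in(0,\tilde d)$ with $\tilde d$ small enough. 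The main obstacle I anticipate is purely organisational: one must verify that $\tilde d$ and the inequality $S_q(d)\ne0$ can be made uniform in $l$. This works because $\mathcal{I}m B_q(d)\to-u^0_qb^0_q\sin\theta^0_q>0$ while $\tau^d_{q,l}\ge\tau^d_{q,0}\to\theta^0_q/\nu^0_q>0$ stays bounded below, so $\mathcal{I}m S_q(d)>0$ for every $l$; after this observation the argument is a line-by-line copy of the proof of Theorem~\ref{Remu}.
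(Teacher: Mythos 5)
Your proposal is correct and is exactly the route the paper intends: the paper omits this proof entirely, stating only that it follows ``using similar arguments as Theorem \ref{Remu},'' and your adaptation — differentiating \eqref{smallDeltapsi}, pairing with $\widetilde{\bm\varphi}^d_q$, discarding the purely imaginary $\tau^d_{q,l}$-term, and collapsing the sums to the $j=q$ entry as $d\to0$ to get the limit $(\nu^0_q)^2/|\lim_{d\to 0}S_q(d)|^2>0$ via $b^0_q<0$ and $\sin\theta^0_q>0$ — is the correct way to carry that out. Your closing remark on uniformity in $l$ (the numerator is $l$-independent and $\mathcal{I}m\,S_q(d)>0$ for all $l$) is a detail the paper glosses over, and your treatment of it is sound.
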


By Theorems \ref{sstability1} and \ref{3c25}-\ref{smallRemu}, we obtain the following result.
\begin{theorem}\label{dsmTh}
Assume that $\bf(H0)$-$\bf(H1)$ hold, and $d\in(0,\tilde d)$, where $0<\tilde d\ll 1$. Let $\bm u^d$ be the unique positive equilibrium obtained in Lemma \ref{GAS}. Then the following statements hold.
\begin{enumerate}
\item [{\rm (i)}] If $a_j^0-b_j^0<0$ for all $j=1,\cdots,n$, then $\bm u^d$ is locally asymptotically stable for $\tau\in[0,\infty)$
\item [{\rm (ii)}] If $\bf(H2)$ and \eqref{strong} holds, then $\bm u^d$ is locally asymptotically stable for $\tau\in[0, \tau^d_{\hat q,0})$, and
unstable for $\tau \in( \tau^d_{\hat q,0},\infty)$, where $\tau^d_{\hat q,0}=\ds\min_{1\le q\le p}\tau_{q,0}^d$. Moreover, when $\tau=\tau^d_{\hat q,0}$, system \eqref{M1} undergoes a Hopf bifurcation.
\end{enumerate}
\end{theorem}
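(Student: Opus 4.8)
The plan is to run the same stability/bifurcation bookkeeping used for Theorem \ref{stable}, but feeding in the small-$d$ ingredients proved above instead of the ones near $d_*$. Part (i) is immediate: if $a_j^0-b_j^0<0$ for all $j$, then Theorem \ref{sstability1} gives $\sigma(A_\tau(d))\subset\{x+{\rm i}y:x,y\in\mathbb R,\ x<0\}$ for all $d\in(0,\hat d_1]$ and all $\tau\ge0$, so $\bm u^d$ is locally asymptotically stable for every $\tau\ge0$.

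For part (ii), I first note that $\bf(H2)$ together with \eqref{strong} forces condition \eqref{isolp} (this is exactly the Remark after Lemma \ref{limz}), so Theorems \ref{3c25}, \ref{smallsimpleeigenvalue} and \ref{smallRemu} all apply. I then record two facts about the spectrum of $A_\tau(d)$ for $0<d\ll1$. First, at $\tau=0$ the equilibrium $\bm u^d$ is globally asymptotically stable by Lemma \ref{GAS}, so every eigenvalue of $A_0(d)$ has negative real part. Second, $\mu=0$ is never an eigenvalue of $A_\tau(d)$ for any $\tau$, because $\Delta(d,0,\tau)=dA+\operatorname{diag}(f_j(u_j^d,u_j^d))+\operatorname{diag}(u_j^d(a_j^d+b_j^d))$ is independent of $\tau$ and equals the Jacobian of \eqref{M1} at $\bm u^d$ in the case $\tau=0$, which is invertible by the first fact. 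Hence, by Theorem \ref{3c25}, the spectrum of $A_\tau(d)$ can meet the imaginary axis only when $\tau\in\{\tau^d_{q,l}:1\le q\le p,\ l=0,1,2,\cdots\}$, and because $\tau^d_{q,l}=(\theta^d_q+2l\pi)/\nu^d_q$ is strictly increasing in $l$, the smallest such value is $\tau^d_{\hat q,0}=\min_{1\le q\le p}\tau^d_{q,0}$; moreover, since \eqref{strong} makes $\theta^0_q/\nu^0_q$ mutually distinct, the numbers $\tau^d_{q,0}$ are mutually distinct for $0<d\ll1$, so in fact $\tau^d_{\hat q,0}<\tau^d_{q,l}$ for every $(q,l)\ne(\hat q,0)$.

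Now let $N(\tau)$ denote the number of eigenvalues of $A_\tau(d)$, counted with multiplicity, lying in the closed right half-plane; this is finite, since the generator of a delay system has discrete spectrum with only finitely many points in any half-plane $\{\mathcal{R}e\,\mu\ge -c\}$. The two facts above give $N(0)=0$ and no eigenvalue on ${\rm i}\mathbb R$ for $\tau\in[0,\tau^d_{\hat q,0})$; as eigenvalues vary continuously with $\tau$ and can enter the right half-plane only by crossing ${\rm i}\mathbb R$, we get $N(\tau)=0$ on $[0,\tau^d_{\hat q,0})$, i.e.\ $\bm u^d$ is locally asymptotically stable there. At $\tau=\tau^d_{\hat q,0}$ the only eigenvalues on ${\rm i}\mathbb R$ are the simple pair $\pm{\rm i}\nu^d_{\hat q}$ (by the strict-minimum property, $\mu=0$ not being an eigenvalue, and Theorem \ref{smallsimpleeigenvalue}), and Theorem \ref{smallRemu} supplies the transversality $\frac{d}{d\tau}\mathcal{R}e[\mu(\tau^d_{\hat q,0})]>0$; the Hopf bifurcation theorem then yields a Hopf bifurcation of \eqref{M1} at $\bm u^d$ for $\tau=\tau^d_{\hat q,0}$. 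Finally, since Theorem \ref{smallRemu} gives $\frac{d}{d\tau}\mathcal{R}e[\mu(\tau^d_{q,l})]>0$ at every crossing value $\tau^d_{q,l}$, each such crossing sends a conjugate pair from the left half-plane into the right, so $N(\tau)$ is nondecreasing in $\tau$ and $N(\tau)\ge2$ for $\tau>\tau^d_{\hat q,0}$; hence $\bm u^d$ is unstable for all $\tau\in(\tau^d_{\hat q,0},\infty)$.

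The continuity of the spectrum in $\tau$ and the crossing bookkeeping are routine; the only step requiring genuine care --- and the main obstacle --- is upgrading instability from ``just above $\tau^d_{\hat q,0}$'' to ``for all $\tau>\tau^d_{\hat q,0}$'', which is precisely why Theorem \ref{smallRemu} must be known at \emph{every} $\tau^d_{q,l}$, not merely at the first one, and why \eqref{isolp} (equivalently \eqref{strong}) is imposed so as to keep all of the crossing values simple and well separated.
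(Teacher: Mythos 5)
Your proposal is correct and follows essentially the same route as the paper, which simply assembles Theorem \ref{sstability1} (for part (i)) with Theorems \ref{3c25}, \ref{smallsimpleeigenvalue} and \ref{smallRemu} (for part (ii)) via the standard eigenvalue-crossing bookkeeping that you spell out. The only point worth tightening is your appeal to Lemma \ref{GAS} for strict negativity of the spectrum at $\tau=0$: global asymptotic stability alone does not formally yield this, but it follows directly since $[dA+\operatorname{diag}(f_j(u_j^d,u_j^d))]\bm u^d=\bm 0$ with $\bm u^d\gg\bm 0$ gives $s(dA+\operatorname{diag}(f_j(u_j^d,u_j^d)))=0$ by Perron--Frobenius, and adding the strictly negative diagonal $\operatorname{diag}(u_j^d(a_j^d+b_j^d))$ pushes the spectral bound below zero.
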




\section{An example}
In this section, we apply the obtained results in Section 3 to a concrete example and discuss the effect of network topology on Hopf bifurcations.
Choose the growth rate per capita as follows: $$f_j(u_j, u_j(t-\tau))=m_j- \hat a_j u_j(t)- \hat b_j u_j(t-\tau) \;\;\;\text{for}\;\;\; j=1, \cdots, n.$$
 Then model \eqref{M1} takes the following form:
\begin{equation}\label{M12}
\begin{cases}
\displaystyle \frac{d u_{j}}{d t}=d \sum_{k=1}^{n} \alpha_{jk} u_{k}+ u_j \left(m_j- \hat a_j u_j(t)- \hat b_j u_j(t-\tau)\right),  &t>0,~~ j=1, \cdots, n,\\
\displaystyle \bm{u}(t)=\bm\psi(t) \geq \bm 0, & t \in[-\tau, 0],
\end{cases}
\end{equation}
where $(\alpha_{jk})$ satisfies assumption $\bf(H0)$, $m_j$ represents the intrinsic growth rate in patch $j$,
and $\hat a_j,\hat b_j>0$ represent the instantaneous and delayed dependence of the growth rate in patch $j$, respectively. Clearly, assumption $\bf(H1)$ holds. We remark that the continuous space version of model \eqref{M12} with spatially homogeneous environments  has been investigated in \cite{SuWeiShi2012}.
\subsection{Stability and Hopf bifurcations}
For case (I) ($0<d_*-d\ll1$), the quantities $\tilde a$ and $\tilde b$  take  the following form:
\begin{equation}\label{tilder1r21}
\tilde a=-\sum_{j=1}^{n}\hat a_j \eta_j^2 \varsigma_j,\;\;\tilde b=-\sum_{j=1}^{n}\hat b_j \eta_j^2 \varsigma_j,
\end{equation}
where $\bm\eta$ and $\bm\va$ are defined in \eqref{bmeta}. Then, by Theorem
\ref{stable}, we obtain the following result.
\begin{proposition}
Let $\bm u^d$ be the unique positive equilibrium of \eqref{M1} obtained in Lemma \ref{GAS} for $d\in(0,d_*)$. Then, for $d \in [\tilde d_2, d_*)$ with $0<d_*-\tilde d_2 \ll 1$, the following statements hold.
 \begin{enumerate}
    \item [{\rm (i)}] If $\sum_{j=1}^n \left(\hat a_j-\hat b_j\right)\eta_j^2 \va_j>0 $, then $\bm {u}^{d}$ of model \eqref{M1} is locally asymptotically stable for $\tau \in\left[0, \infty\right)$.
    \item [{\rm (ii)}] If $\sum_{j=1}^n \left(\hat a_j-\hat b_j\right)\eta_j^2 \va_j<0 $, then $\bm {u}^{d}$  is locally asymptotically stable for $\tau \in\left[0, \tau_{d,0}\right),$ and unstable for $\tau \in\left(\tau_{d,0}, \infty\right)$, where $\tau_{d,0}$ is defined in Theorem \ref{solutionvtaupsi}. Moreover, when $\tau=\tau_{d,0},$ system \eqref{M12} undergoes a Hopf bifurcation at $\bm {u}^{d}$.
    \end{enumerate}
\end{proposition}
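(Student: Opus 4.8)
The plan is to obtain this Proposition as a direct specialization of Theorem \ref{stable} to the logistic growth rate $f_j(x,y)=m_j-\hat a_j x-\hat b_j y$, the only real work being the identification of the quantities $\tilde a,\tilde b$ and the translation of the sign condition $\tilde a-\tilde b\lessgtr 0$ into the combination appearing in the statement. First I would check that $\bf(H1)$ holds for this $f_j$: each $f_j$ is affine, hence lies in $C^4(\mathbb R\times\mathbb R,\mathbb R)$; $f_j(0,0)=m_j>0$ by hypothesis; and $g_j(x)=f_j(x,x)=m_j-(\hat a_j+\hat b_j)x$ satisfies $g_j'(x)=-(\hat a_j+\hat b_j)<0$ for all $x$, since $\hat a_j,\hat b_j>0$. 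As $(\alpha_{jk})$ is assumed to satisfy $\bf(H0)$, both standing hypotheses of Theorem \ref{stable} are in force, and the unique positive equilibrium $\bm u^d$ of Lemma \ref{GAS} exists on $(0,d_*)$.

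Next I would compute the derivatives $a_j,b_j$ of \eqref{r1r2} for this model: $a_j=\partial f_j(0,0)/\partial x=-\hat a_j$ and $b_j=\partial f_j(0,0)/\partial y=-\hat b_j$. Substituting these into the definitions \eqref{tilder1r2} of $\tilde a$ and $\tilde b$ yields exactly \eqref{tilder1r21}, namely $\tilde a=-\sum_{j=1}^n\hat a_j\eta_j^2\varsigma_j$ and $\tilde b=-\sum_{j=1}^n\hat b_j\eta_j^2\varsigma_j$, and therefore
\[
\tilde a-\tilde b=-\sum_{j=1}^n(\hat a_j-\hat b_j)\eta_j^2\varsigma_j .
\]
Hence the condition $\tilde a-\tilde b<0$ of Theorem \ref{stable} is equivalent to $\sum_{j=1}^n(\hat a_j-\hat b_j)\eta_j^2\varsigma_j>0$, and $\tilde a-\tilde b>0$ is equivalent to $\sum_{j=1}^n(\hat a_j-\hat b_j)\eta_j^2\varsigma_j<0$. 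Note also that $\bf(H1)$ (see \eqref{tildeab<0n}) already forces $\tilde a+\tilde b<0$ here, so the hypothesis $\tilde a-\tilde b>0$ of part (ii) is automatically consistent and the perturbation machinery of Lemma \ref{bmF0}--Theorem \ref{Remu} underlying Theorem \ref{stable} applies without modification.

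With this translation, statement (i) is immediate from Theorem \ref{stable}(i): under $\sum_{j=1}^n(\hat a_j-\hat b_j)\eta_j^2\varsigma_j>0$ one has $\tilde a-\tilde b<0$, so $\bm u^d$ is locally asymptotically stable for all $\tau\ge0$. Likewise, statement (ii) follows from Theorem \ref{stable}(ii): under $\sum_{j=1}^n(\hat a_j-\hat b_j)\eta_j^2\varsigma_j<0$ one has $\tilde a-\tilde b>0$, so there is a threshold $\tau_{d,0}>0$ — precisely the one produced in Theorem \ref{solutionvtaupsi} via \eqref{taunupsi} with $l=0$ — below which $\bm u^d$ is locally asymptotically stable and above which it is unstable, with a Hopf bifurcation of \eqref{M12} at $\tau=\tau_{d,0}$. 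The simplicity of $\mathrm i\nu_d$ and the transversality $d\,\mathcal Re[\mu(\tau_{d,0})]/d\tau>0$ needed to justify calling this a Hopf bifurcation are already contained in Theorems \ref{simpleeigenvalue} and \ref{Remu}, invoked inside Theorem \ref{stable}.

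There is essentially no analytic obstacle in this argument; the entire content is the reduction to Theorem \ref{stable}. The only point requiring care is the sign bookkeeping when passing from the abstract pair $(\tilde a,\tilde b)$ to the ecologically meaningful combination $\sum_{j}(\hat a_j-\hat b_j)\eta_j^2\varsigma_j$ (the extra minus sign coming from $a_j=-\hat a_j$, $b_j=-\hat b_j$), together with the observation that the borderline case $\sum_{j}(\hat a_j-\hat b_j)\eta_j^2\varsigma_j=0$, i.e.\ $\tilde a=\tilde b$, is deliberately left out of the statement, matching the exclusion of $\tilde a-\tilde b=0$ in Theorem \ref{stable}.
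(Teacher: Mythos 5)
Your proposal is correct and coincides with the paper's own argument: the paper likewise records $a_j=-\hat a_j$, $b_j=-\hat b_j$, hence $\tilde a=-\sum_j\hat a_j\eta_j^2\varsigma_j$ and $\tilde b=-\sum_j\hat b_j\eta_j^2\varsigma_j$ as in \eqref{tilder1r21}, and then deduces the result directly from Theorem \ref{stable}. The sign translation $\tilde a-\tilde b=-\sum_j(\hat a_j-\hat b_j)\eta_j^2\varsigma_j$ and the verification of $\bf(H1)$ are exactly the content needed, so nothing is missing.
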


Now we consider case (II) ($(0<d\ll1$). The quantities for this case take the following form:
\begin{equation}\label{tilder1r211}
 a_j^0=-\hat a_j,\;\; b_j^0=-\hat b_j, \;\;\nu_j^0=\frac{m_j \sqrt{(\hat b_j)^2-(\hat a_j)^2}}{\hat a_j+\hat b_j}, \;\; \theta_j^0= \arccos \left(-\frac{\hat a_j  }{ \hat b_j }\right).
\end{equation}
Moreover, $\bf(H2)$ is reduced as follows:
\begin{enumerate}
\item [$\bf(\tilde{H}2)$] $\hat a_j-\hat b_j< 0$ for $j=1,\cdots,p$, and $\hat a_j-\hat b_j>0$ for $j=p+1,\cdots,n$, where $1\le p\le n$.
\end{enumerate}
Then, by Theorem \ref{dsmTh}, we have the following result.

\begin{proposition}\label{3c25new1}
Let $\bm u^d$ be the unique positive equilibrium of \eqref{M1} obtained in Lemma \ref{GAS} for $d\in(0,d_*)$. Then, for $d \in (0, \tilde d)$ with $0<\tilde d \ll 1$, the following statements hold.
\begin{enumerate}
\item [{\rm (i)}] If $\hat a_j-\hat b_j>0$ for all $j=1,\cdots,n$, then $\bm u^d$ is locally asymptotically stable for $\tau\in[0,\infty)$.
\item [{\rm (ii)}] If $\bf(\tilde{H}2)$ holds and $\ds\frac{\theta_j^0}{\nu_j^0}\ne\frac{\theta_k^0}{\nu_k^0}$ for any $j\ne k$ and $1\le j,k\le p$, then $\bm u^d$ is locally asymptotically stable for $\tau\in[0, \tau^d_{\hat q,0})$, and
unstable for $\tau \in( \tau^d_{\hat q,0},\infty)$, where $\tau^d_{\hat q,0}$ is defined in Theorem \ref{dsmTh}. Moreover, when $\tau=\tau^d_{\hat q,0}$, system \eqref{M1} undergoes a Hopf bifurcation at $\bm {u}^{d}$.
\end{enumerate}
\end{proposition}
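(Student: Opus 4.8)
The plan is to check that \eqref{M12} is an instance of \eqref{M1} satisfying all the standing hypotheses of Section 3, and then to translate the abstract conclusions of Theorem \ref{dsmTh} into the concrete statements here; no new analysis is needed beyond bookkeeping the sign conventions.

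First I would verify assumption $\bf(H1)$. For $f_j(x,y)=m_j-\hat a_jx-\hat b_jy$ we have $f_j\in C^\infty(\mathbb{R}\times\mathbb{R},\mathbb{R})$, $f_j(0,0)=m_j>0$, and $g_j(x):=f_j(x,x)=m_j-(\hat a_j+\hat b_j)x$ satisfies $g_j'(x)=-(\hat a_j+\hat b_j)<0$ for all $x$ since $\hat a_j,\hat b_j>0$; combined with the standing assumption that $(\alpha_{jk})$ satisfies $\bf(H0)$, this puts us in the framework of Lemma \ref{GAS} and Theorem \ref{dsmTh}, so in particular the positive equilibrium $\bm u^d$ exists and is unique for $d\in(0,d_*)$.

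Next I would compute the relevant derived quantities at $d=0$. Since $f_j$ is affine, $a_j^0=\partial f_j/\partial x=-\hat a_j$ and $b_j^0=\partial f_j/\partial y=-\hat b_j$ at the point $(u_j^0,u_j^0)$ (indeed at every point), whence $a_j^0-b_j^0=\hat b_j-\hat a_j$. Solving $f_j(x,x)=0$ gives $u_j^0=m_j/(\hat a_j+\hat b_j)>0$, so substituting into \eqref{tauthetanu} reproduces exactly the formulas $\nu_j^0=m_j\sqrt{\hat b_j^2-\hat a_j^2}/(\hat a_j+\hat b_j)$ and $\theta_j^0=\arccos(-\hat a_j/\hat b_j)$ recorded in \eqref{tilder1r211}. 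Consequently: the hypothesis ``$a_j^0-b_j^0<0$ for all $j$'' of Theorem \ref{dsmTh}(i) is the same as ``$\hat a_j-\hat b_j>0$ for all $j$''; assumption $\bf(H2)$ is precisely $\bf(\tilde H2)$; and condition \eqref{strong} is precisely the non-resonance requirement $\theta_j^0/\nu_j^0\ne\theta_k^0/\nu_k^0$ for $j\ne k$, $1\le j,k\le p$.

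With these identifications, part (i) follows immediately from Theorem \ref{dsmTh}(i) and part (ii) from Theorem \ref{dsmTh}(ii), with $\tilde d$ taken as in that theorem and $\tau^d_{\hat q,0}=\min_{1\le q\le p}\tau^d_{q,0}$ inherited verbatim. There is essentially no obstacle here: the only point requiring care is the sign flip $a_j^0=-\hat a_j$, $b_j^0=-\hat b_j$, which must be tracked so that, for instance, $-a_j^0/b_j^0=-\hat a_j/\hat b_j$ and $(b_j^0)^2-(a_j^0)^2=\hat b_j^2-\hat a_j^2$, the latter being positive under $\bf(\tilde H2)$ for $1\le j\le p$ so that $\nu_j^0$ and $\theta_j^0$ are well defined.
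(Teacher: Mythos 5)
Your proposal is correct and matches the paper's own (implicit) argument: the paper likewise just records the identifications $a_j^0=-\hat a_j$, $b_j^0=-\hat b_j$, $u_j^0=m_j/(\hat a_j+\hat b_j)$ in \eqref{tilder1r211}, notes that $\bf(H2)$ becomes $\bf(\tilde H2)$, and cites Theorem \ref{dsmTh}. Your sign bookkeeping ($a_j^0-b_j^0=\hat b_j-\hat a_j$, $-a_j^0/b_j^0=-\hat a_j/\hat b_j$) is exactly the point that needs checking, and it is done correctly.
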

\begin{remark}
We remark that Proposition \ref{3c25new1} (ii) also holds if $\bf(\tilde{H}2)$ is replaced by the following assumption:
\begin{enumerate}
\item [$\bf(\tilde{A}2)$] $\{\hat a_j-\hat b_j\}_{j=1}^n$ changes sign and $\hat a_j-\hat b_j\ne0$ for all $j=1,\cdots,n$.
\end{enumerate}
The proof is similar, and here we omit the details for simplicity.
\end{remark}

\subsection{The effect of network topologies}
In this subsection, we discuss the effect of network topologies on Hopf bifurcations values for $0<d\ll1$. Since the computation is tedious, we only consider a special case for simplicity. Letting $\hat a_j=0$ and $\hat b_j=1$ for $j=1,\cdots,n$, model \eqref{M12} is reduced to the following system:
\begin{equation}\label{simple}
\begin{cases}
\displaystyle \frac{d u_{j}}{d t}=d \sum_{k=1}^{n} \alpha_{jk} u_{k}+ u_j \left(m_j- u_j(t-\tau)\right),  &t>0,~~ j=1, \cdots, n,\\
\displaystyle \bm{u}(t)=\bm\psi(t) \geq \bm 0, & t \in[-\tau, 0],
\end{cases}
\end{equation}
where $(\alpha_{jk})$ satisfies assumption $\bf(H0)$, and $m_j>0$ for $j=1, \cdots, n$. Clearly, $\bf(H1)$-$\bf(H2)$ hold. By Proposition \ref{3c25new1} (ii) and a direct computation, we see that,  if
\begin{equation}\label{mjmk}
m_j\ne m_k\;\;\text{for any}\;\; j\ne k,
\end{equation}
then model \eqref{simple} undergoes a Hopf bifurcation for $0<d\ll 1$ with the first Hopf bifurcation value $\tau=\tau^d_{\hat q,0}$, where $\hat q$ satisfies $m_{\hat q}=\ds\max_{1\le j\le n} m_j$. By Lemma \ref{l4.4} and Theorem \ref{3c25}, we see that
\begin{equation}\label{firHo}
\tau^d_{\hat q,0}=\ds\frac{\theta_{\hat q}^d}{\nu_{\hat q}^d}\;\;\text{and}\;\; \lim_{d \to 0}\tau^d_{\hat q,0}=\ds\f{\pi}{2  m_{\hat q}}.
\end{equation}
Therefore, to obtain the effect of network topologies, we need to compute
the first derivative of $\tau^d_{\hat q,0}$ with respect to $d$ in the following.
\begin{proposition}\label{Fina}
Let $\tau^d_{\hat q,0}$ be defined in \eqref{firHo}, where $\hat q$ satisfies $m_{\hat q}=\max_{1\le j\le n} m_j$. Then
\begin{equation}\label{2firHo1}
\left(\tau^d_{\hat q,0} \right)'\big|_{d=0}=\ds\f{\mathcal T(A)}{ m^2_{\hat q}},
\end{equation}
where
\begin{equation}\label{matht}
\mathcal T(A):=-\ds\f{\pi}{2}\alpha_{{\hat q}{\hat q}}+\left(1-\ds\f{\pi }{2}\right)\ds\f{1}{m_{\hat q}}\sum_{k\ne{\hat q}} {\alpha_{{\hat q}k}m_k}.
\end{equation}
\end{proposition}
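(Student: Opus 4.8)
The plan is to differentiate the characteristic equation $\bm H(d,\nu^d_{\hat q},\theta^d_{\hat q},\bm\varphi^d_{\hat q})=\bm 0$ of \eqref{Deltaiv} with respect to $d$ at $d=0$, read off $(\theta^d_{\hat q})'|_{d=0}$ and $(\nu^d_{\hat q})'|_{d=0}$ from its $\hat q$-th scalar component, and then apply the quotient rule to $\tau^d_{\hat q,0}=\theta^d_{\hat q}/\nu^d_{\hat q}$. All the regularity needed is already available: Lemma~\ref{l4.4} gives a $C^1$ map $d\mapsto(\nu^d_{\hat q},\theta^d_{\hat q},\bm\varphi^d_{\hat q})$ on a right neighborhood of $0$, with $\bm\varphi^d_{\hat q}$ normalized (as in the proof of Lemma~\ref{l4.4}) so that its $\hat q$-th component is identically $1$, whence the $\hat q$-th component of $(\bm\varphi^d_{\hat q})'|_{d=0}$ vanishes; and Lemma~\ref{asymp}$(i)$ gives that $d\mapsto\bm u^d$ is $C^1$ on $[0,d_*)$. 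For the present example $f_j(x,y)=m_j-y$, so $a^d_j\equiv0$, $b^d_j\equiv-1$, and at $d=0$ one has $u^0_j=m_j$, $\bm\varphi^0_{\hat q}$ as in Lemma~\ref{limz}, and $\theta^0_{\hat q}=\pi/2$, $\nu^0_{\hat q}=m_{\hat q}$ by \eqref{tilder1r211} and \eqref{firHo}.

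First I would compute the equilibrium perturbation: differentiating $d\sum_{k=1}^n\alpha_{jk}u^d_k+u^d_j(m_j-u^d_j)=0$ at $d=0$ and using $u^0_j=m_j$ gives
\begin{equation*}
\left.(u^d_j)'\right|_{d=0}=\frac{1}{m_j}\sum_{k=1}^n\alpha_{jk}m_k,\qquad j=1,\dots,n.
\end{equation*}
Next, since $\bm H$ is linear in $\bm\varphi$, its Fr\'echet derivative in $\bm\varphi$ at $(0,m_{\hat q},\pi/2,\bm\varphi^0_{\hat q})$ is the diagonal matrix ${\rm i}\operatorname{diag}(m_j-m_{\hat q})$, whose $\hat q$-th diagonal entry is ${\rm i}(m_{\hat q}-m_{\hat q})=0$; this is the crucial simplification. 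Hence, differentiating $\bm H(d,\nu^d_{\hat q},\theta^d_{\hat q},\bm\varphi^d_{\hat q})=\bm 0$ in $d$ at $d=0$ and taking the $\hat q$-th component annihilates the $(\bm\varphi^d_{\hat q})'$-term, the $\nu$- and $\theta$-variations enter with coefficients $-{\rm i}$ and $-{\rm i}\,e^{-{\rm i}\theta^0_{\hat q}}u^0_{\hat q}b^0_{\hat q}=m_{\hat q}$, and the explicit $d$-derivative of $\bm H$ contributes $\alpha_{\hat q\hat q}$ (from $A\bm\varphi^0_{\hat q}$) together with the contributions of $(u^d_{\hat q})'|_{d=0}$ arising from the $d$-dependence of $u^d_{\hat q}$ inside $f_{\hat q}(u^d_{\hat q},u^d_{\hat q})$ and inside $e^{-{\rm i}\theta^0_{\hat q}}u^d_{\hat q}b^0_{\hat q}$, which sum to $({\rm i}-1)\frac{1}{m_{\hat q}}\sum_{k=1}^n\alpha_{\hat qk}m_k$. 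Writing $\sum_k\alpha_{\hat qk}m_k=\alpha_{\hat q\hat q}m_{\hat q}+\sum_{k\ne\hat q}\alpha_{\hat qk}m_k$ and separating real and imaginary parts of the resulting scalar identity (both $(\nu^d_{\hat q})'|_{d=0}$ and $(\theta^d_{\hat q})'|_{d=0}$ being real) yields
\begin{equation*}
\left.(\theta^d_{\hat q})'\right|_{d=0}=\frac{1}{m_{\hat q}^2}\sum_{k\ne\hat q}\alpha_{\hat qk}m_k,\qquad \left.(\nu^d_{\hat q})'\right|_{d=0}=\alpha_{\hat q\hat q}+\frac{1}{m_{\hat q}}\sum_{k\ne\hat q}\alpha_{\hat qk}m_k.
\end{equation*}

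Finally, differentiating $\tau^d_{\hat q,0}=\theta^d_{\hat q}/\nu^d_{\hat q}$ at $d=0$ with $\theta^0_{\hat q}=\pi/2$ and $\nu^0_{\hat q}=m_{\hat q}$ gives
\begin{equation*}
\left.(\tau^d_{\hat q,0})'\right|_{d=0}=\frac{m_{\hat q}\left.(\theta^d_{\hat q})'\right|_{d=0}-\tfrac{\pi}{2}\left.(\nu^d_{\hat q})'\right|_{d=0}}{m_{\hat q}^2}=\frac{1}{m_{\hat q}^2}\Big[-\tfrac{\pi}{2}\alpha_{\hat q\hat q}+\big(1-\tfrac{\pi}{2}\big)\tfrac{1}{m_{\hat q}}\sum_{k\ne\hat q}\alpha_{\hat qk}m_k\Big],
\end{equation*}
which is \eqref{2firHo1}--\eqref{matht}. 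The argument is essentially bookkeeping, and I expect the only delicate point to be keeping the $k=\hat q$ term of $\sum_k\alpha_{\hat qk}m_k$ separate from $\sum_{k\ne\hat q}$, so that the $\alpha_{\hat q\hat q}$ coming from $A\bm\varphi^0_{\hat q}$ cancels the $\alpha_{\hat q\hat q}$ hidden inside $\frac{1}{m_{\hat q}}\sum_k\alpha_{\hat qk}m_k$ in the real part; apart from this there is no genuine obstacle, since the vanishing of the $\hat q$-th diagonal entry of the $\bm\varphi$-derivative of $\bm H$ at $d=0$ makes it unnecessary to solve for $(\bm\varphi^d_{\hat q})'|_{d=0}$.
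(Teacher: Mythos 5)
Your proposal is correct and follows essentially the same route as the paper: differentiate the eigenvalue relation $\bm H(d,\nu^d_{\hat q},\theta^d_{\hat q},\bm\varphi^d_{\hat q})=\bm 0$ in $d$, use $(u^d_j)'|_{d=0}=\frac{1}{m_j}\sum_k\alpha_{jk}m_k$, extract $(\theta^d_{\hat q})'|_{d=0}$ and $(\nu^d_{\hat q})'|_{d=0}$ by separating real and imaginary parts, and finish with the quotient rule; your values agree with the paper's \eqref{1}. The only (harmless) difference is that you eliminate the $(\bm\varphi^d_{\hat q})'$ term by taking the $\hat q$-th component directly at $d=0$, where the $\hat q$-th diagonal entry of $\Delta$ vanishes, whereas the paper pairs with the adjoint eigenvector $\widetilde{\bm\varphi}^d_{\hat q}$ at general $d$ and then lets $d\to 0$ — in the limit these are the same projection onto $\bm\varphi^0_{\hat q}$.
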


\begin{proof}
By \eqref{firHo}, we have
\begin{equation}\label{rk0}
\left({\tau^d_{\hat q,0}}\right)'=\displaystyle\left(\frac{\theta_{\hat q}^d}{\nu_{\hat q}^d}\right)'=\frac{\left({\theta_{\hat q}^d}\right)'{\nu_{\hat q}^d}-{\theta_{\hat q}^d}\left({\nu_{\hat q}^d}\right)'}{\left({\nu_{\hat q}^d}\right)^2},
\end{equation}
where $'$ is the derivative with respect to $d$.
Substituting $\nu=\nu_{\hat q}^{d}$, $\theta=\theta_{\hat q}^{d}$ and $\bm \varphi=\bm \varphi_{\hat q}^{d}$ into \eqref{Deltaiv}, we have
\begin{equation}\label{smallDeltaphi111}
\begin{aligned}
dA\bm {\bm\varphi}^d_{\hat q}+ \operatorname{diag} \left(m_j-u^d_{j}\right){\bm\varphi}^d_{\hat q} - e^{{-\rm i} \theta_{\hat q}^{d}}\operatorname{diag}\left(u^d_{j}\right){\bm\varphi}^d_{\hat q} - {\rm i}\nu_{\hat q}^{d} {\bm\varphi}^d_{\hat q}=\bm 0.
\end{aligned}
\end{equation}
Differentiating \eqref{smallDeltaphi111} with respect to $d$, we have
\begin{equation}\label{derivt}
\begin{split}
-\Delta\left(d, {\rm i} \nu_{\hat q}^{d}, \tau^d_{\hat q,0}\right)\left({\bm\varphi}^d_{\hat q}\right)'=&A\bm {\bm\varphi}^d_{\hat q}
-\operatorname{diag} \left((u^d_{j})'\right){\bm\varphi}^d_{\hat q}+{\rm i} (\theta_{\hat q}^{d})'e^{{-\rm i} \theta_{\hat q}^{d}}\operatorname{diag}\left(u^d_{j}\right){\bm\varphi}^d_{\hat q} \\&-e^{{-\rm i} \theta_{\hat q}^{d}}\operatorname{diag}\left((u^d_{j})'\right){\bm\varphi}^d_{\hat q} - {\rm i}(\nu_{\hat q}^{d})' {\bm\varphi}^d_{\hat q},
\end{split}
\end{equation}
where $\Delta(d,\mu,\tau)$ is defined in \eqref{2triangle}.
Let $\widetilde{\bm\varphi}^d_{\hat q}=(\widetilde\varphi_{\hat q,1}^{d},\cdots, \widetilde\varphi_{\hat q,n}^{d})^T$ be the
the corresponding eigenvector of $\widetilde{\Delta}\left(d, {\rm i} {\nu_{\hat q}^d}, {\tau}^{d}_{\hat q,0}\right)$ with respect to eigenvalue $0$, where $\widetilde{\Delta}\left(d, {\rm i} {\nu_{\hat q}^d}, {\tau}^{d}_{\hat q,0}\right)$ is the conjugate transpose matrix of $\Delta\left(d, {\rm i} \nu_{\hat q}^{d}, \tau^d_{\hat q,0}\right)$.
Using similar arguments as in the proof of Proposition \ref{p}, we see that, ignoring a scalar factor,  $\widetilde{\bm\varphi}^d_{\hat q}$ satisfies
\begin{equation}\label{limwide12}
\lim_{d\to0}\widetilde{\bm\varphi}^d_{\hat q}=\bm\varphi^0_{\hat q},
\end{equation}
where $\bm \varphi^0_{q}$ is defined in Lemma \ref{limz}. Note that
$$
0=\left\langle  \widetilde \Delta\left(d, {\rm i} \nu_{\hat q}^{d},\tau^d_{\hat q,0}\right)\widetilde{\bm \varphi}_{\hat q}^{d}, \left({\bm\varphi}^d_{\hat q}\right)'\right\rangle=\left\langle\widetilde{\bm \varphi}_{\hat q}^{d}, \Delta\left({d}, {\rm i} \nu_{\hat q}^{d}, \tau^d_{\hat q,0}\right)\left({\bm\varphi}^d_{\hat q}\right)'\right\rangle.
$$
Then, multiplying both sides of \eqref{derivt} by $(\overline{\widetilde\varphi}_{\hat q,1}^{d},\cdots, \overline{\widetilde\varphi}_{\hat q,n}^{d})$ to the left, we have
\begin{equation}\label{4.7}
\begin{aligned}
0=&-\left\langle \widetilde{\bm\varphi}^d_{\hat q}, \Delta\left(d, {\rm i} \nu_{\hat q}^{d}, \tau^d_{\hat q,0}\right)\left({\bm\varphi}^d_{\hat q}\right)' \right\rangle\\
=&  \left\langle \widetilde{\bm\varphi}^d_{\hat q}, A\bm {\bm\varphi}^d_{\hat q}\right\rangle
-\left\langle \widetilde{\bm\varphi}^d_{\hat q},\operatorname{diag} \left((u^d_{j})'\right){\bm\varphi}^d_{\hat q}\right\rangle+{\rm i} (\theta_{\hat q}^{d})'e^{{-\rm i} \theta_{\hat q}^{d}}\langle\widetilde{\bm\varphi}^d_{\hat q},\operatorname{diag}\left(u^d_{j}\right){\bm\varphi}^d_{\hat q}\rangle\\
 &-e^{{-\rm i} \theta_{\hat q}^{d}}\left\langle\widetilde{\bm\varphi}^d_{\hat q},\operatorname{diag}\left((u^d_{j})'\right){\bm\varphi}^d_{\hat q}\right\rangle - {\rm i}(\nu_{\hat q}^{d})'\left\langle\widetilde{\bm\varphi}^d_{\hat q},{\bm\varphi}^d_{\hat q}\right\rangle.
\end{aligned}
\end{equation}

It follows from Lemma \ref{asymp} that $\bm u^d$ is continuously differentiable for $d\in[0,d_*)$, if we define $u_j^{0}=m_j$ for $j=1,\cdots,n$.
A direct computation yields
\begin{equation}\label{ui0'}
\left(u_{j}^d\right)'\big|_{d=0}=\ds\frac{1}{m_j}\sum_{k=1}^{n} \alpha_{jk} m_k.
\end{equation}
By \eqref{limwide12} and Lemma \ref{l4.4}, we have
\begin{equation}\label{3}
{\bm\varphi}^d_{\hat q},\widetilde{\bm \varphi}^d_{\hat q} \to {\bm\varphi}^0_{\hat q},\;\;\nu_{\hat q}^{d}\to m_{\hat q}, \text{and}\;\; \theta_{\hat q}^{d}\to \theta_{\hat q}^{0}=\frac{\pi}{2} \;\;\text{as}\;\; d\to 0,
\end{equation}
where
${\bm \varphi}^{0}_{\hat q}$ satisfies
$\varphi^{0}_{{\hat q},{\hat q}}=1$ and $\varphi^{0}_{{\hat q},k}=0$ for $k\ne {\hat q}$.
 This, combined with \eqref{4.7} and \eqref{ui0'}, implies that
\begin{equation}\label{1}
(\theta_{\hat q}^{d})'\big|_{d=0}=\frac{1}{m_{\hat q}^2}\sum_{k\ne{\hat q}}\alpha_{\hat q k}m_k\;\;\text{and}\;\;(\nu_{\hat q}^{d})'\big|_{d=0} =\ds\frac{1}{m_{\hat q}}\sum_{k=1}^{n} \alpha_{{\hat q}k} m_k.
\end{equation}
Substituting \eqref{1} into \eqref{rk0}, we obtain that \eqref{2firHo1} holds.
This completes the proof.
\end{proof}
Therefore, for $0<d\ll1$ and a given dispersal matrix $A$, we obtain from \eqref{firHo} and  \eqref{2firHo1} that
\begin{equation}\label{esti}
\tau^d_{\hat q,0} = \ds\f{\pi}{2 m_{\hat q}}+ \ds\f{d}{ m^2_{\hat q}}\mathcal T(A)+\mathcal O(d^2),
\end{equation}
where $\mathcal T(A)$ is defined in \eqref{matht}.

Then, by Proposition \ref{Fina}, we obtain the effect of network topologies as follows.
\begin{proposition}\label{5.3}
Let $\tau^d_{\hat q,0} \left(A_i\right)$ be the first Hopf bifurcation of model \eqref{simple} for $A=A_i$, where $A_i=\left(\alpha_{jk}^{(i)}\right)$ ($i=1,2$) satisfies $\bf(H0)$. If $\mathcal T(A_1)>\mathcal T(A_2)$, then
there is $\hat d>0$, depending on $A_1$ and $A_2$, such that $\tau^d_{\hat q,0}(A_1)>\tau^d_{\hat q,0}(A_2)$ for $d \in\left(0,\hat d \right]$.
\end{proposition}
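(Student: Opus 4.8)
The plan is to read the claim off directly from the asymptotic expansion \eqref{esti}, which was obtained from Proposition \ref{Fina}, applied separately to the two dispersion matrices. The first point to record is that the index $\hat q$ is determined solely by $m_{\hat q}=\max_{1\le j\le n}m_j$ (and is unambiguous because of the standing assumption \eqref{mjmk}); in particular it is the \emph{same} for $A_1$ and $A_2$, and the leading coefficient $\pi/(2m_{\hat q})$ in \eqref{esti} does not depend on the dispersion matrix at all. Applying \eqref{esti} to $A=A_1$ and to $A=A_2$ therefore yields, for $d$ in a common interval $(0,\delta]$ — here $\delta$ is taken no larger than either of the two thresholds $\tilde d$ produced by Proposition \ref{3c25new1} for $A_1$ and for $A_2$ — the identities
\begin{equation*}
\tau^d_{\hat q,0}(A_i)=\frac{\pi}{2m_{\hat q}}+\frac{d}{m_{\hat q}^2}\,\mathcal T(A_i)+R_i(d),\qquad i=1,2,
\end{equation*}
where $R_i(d)=\mathcal O(d^2)$ as $d\to 0$.

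Subtracting these two identities, the $d$-independent terms cancel and one is left with
\begin{equation*}
\tau^d_{\hat q,0}(A_1)-\tau^d_{\hat q,0}(A_2)=\frac{d}{m_{\hat q}^2}\bigl(\mathcal T(A_1)-\mathcal T(A_2)\bigr)+\bigl(R_1(d)-R_2(d)\bigr).
\end{equation*}
Since $R_1(d)-R_2(d)=\mathcal O(d^2)$, there are a constant $C>0$ (depending on $A_1$ and $A_2$) and a $\delta'\in(0,\delta]$ with $|R_1(d)-R_2(d)|\le Cd^2$ for all $d\in(0,\delta']$. Invoking the hypothesis $\mathcal T(A_1)>\mathcal T(A_2)$, I would then put
\begin{equation*}
\hat d:=\min\Bigl\{\delta',\ \frac{\mathcal T(A_1)-\mathcal T(A_2)}{2C\,m_{\hat q}^2}\Bigr\}>0.
\end{equation*}
For every $d\in(0,\hat d]$ the linear term dominates the remainder, so
\begin{equation*}
\tau^d_{\hat q,0}(A_1)-\tau^d_{\hat q,0}(A_2)\ \ge\ \frac{d}{2m_{\hat q}^2}\bigl(\mathcal T(A_1)-\mathcal T(A_2)\bigr)\ >\ 0,
\end{equation*}
which is precisely the assertion.

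There is no essential difficulty here: the proposition is a straightforward corollary of the expansion \eqref{esti}. The only two places that call for a little care are (i) verifying that the index $\hat q$ and the $d$-independent part of \eqref{esti} genuinely agree for $A_1$ and $A_2$ — this is exactly where \eqref{mjmk} is used — and (ii) choosing $\hat d$ below \emph{both} ranges of validity of \eqref{esti}, so that the comparison is legitimate simultaneously for the two matrices. Beyond that, the argument is the elementary observation that for small $d$ a nonzero linear term outweighs a quadratic remainder.
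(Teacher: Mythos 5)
Your proposal is correct and follows exactly the route the paper intends: Proposition \ref{5.3} is stated as an immediate consequence of the expansion \eqref{esti} from Proposition \ref{Fina}, and your argument simply makes explicit the subtraction of the two expansions, the cancellation of the $d$-independent term $\pi/(2m_{\hat q})$ (legitimate since $\hat q$ depends only on the $m_j$'s), and the choice of $\hat d$ so that the linear term dominates the remainder on the common range of validity. No discrepancy with the paper's (implicit) proof.
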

\begin{remark}
We remark that if $\alpha_{\hat q k}^{(1)}<\alpha_{\hat q k}^{(2)}$ for all $k=1,\cdots,n$, then $\mathcal T(A_1)>\mathcal T(A_2)$.
\end{remark}
By Proposition \ref{Fina}, we can also show the monotonicity of $\tau^d_{\hat q,0}$ for $0<d\ll1$.
\begin{proposition}\label{Fina1}
Let $\tau^d_{\hat q,0}$ be the first Hopf bifurcation of model \eqref{simple},
where $\hat q$ satisfies $m_{\hat q}=\max_{1\le j\le n} m_j$. Then the following statements hold.
\begin{enumerate}
\item [\rm (i)] If $\mathcal T(A)>0$, then $\left({\tau^d_{\hat q,0}}\right)'>0$ for $0<d \ll1$.
\item [\rm (ii)] If $\mathcal T(A)<0$, then $\left({\tau^d_{\hat q,0}}\right)'<0$ for $0<d \ll1$.
\end{enumerate}
Therefore, network topologies also affect the monotonicity of $\tau^d_{\hat q,0}$ for $0<d\ll1$.
\end{proposition}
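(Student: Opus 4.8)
The plan is to deduce both statements from Proposition~\ref{Fina} together with the smoothness of the branch $d\mapsto\tau^d_{\hat q,0}$ established in Section~3. First I would recall that, by Lemma~\ref{l4.4} and the implicit function theorem argument in its proof, the maps $d\mapsto\nu^d_{\hat q}$ and $d\mapsto\theta^d_{\hat q}$ are continuously differentiable on an interval $[0,\tilde d)$ with $0<\tilde d\ll1$, and that $\nu^d_{\hat q}\to\nu^0_{\hat q}=m_{\hat q}>0$ as $d\to0$ (here $\hat a_j=0$ and $\hat b_j=1$, so $\nu^0_{\hat q}=m_{\hat q}$ by \eqref{tilder1r211}). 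Shrinking $\tilde d$ if necessary so that $\nu^d_{\hat q}>0$ on $[0,\tilde d)$, it follows that
\[
\tau^d_{\hat q,0}=\frac{\theta^d_{\hat q}}{\nu^d_{\hat q}}
\]
is continuously differentiable on $[0,\tilde d)$, and hence $d\mapsto\left(\tau^d_{\hat q,0}\right)'$ is continuous on $[0,\tilde d)$.

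Next I would invoke Proposition~\ref{Fina}, which evaluates this derivative at the endpoint,
\[
\left(\tau^d_{\hat q,0}\right)'\big|_{d=0}=\frac{\mathcal T(A)}{m^2_{\hat q}},
\]
with $\mathcal T(A)$ as in \eqref{matht}. Since $m_{\hat q}>0$, the sign of $\left(\tau^d_{\hat q,0}\right)'\big|_{d=0}$ coincides with the sign of $\mathcal T(A)$.

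Finally, the conclusion follows by continuity. If $\mathcal T(A)>0$, then $\left(\tau^d_{\hat q,0}\right)'\big|_{d=0}>0$, and since $d\mapsto\left(\tau^d_{\hat q,0}\right)'$ is continuous at $d=0$, there exists $\hat d\in(0,\tilde d)$ such that $\left(\tau^d_{\hat q,0}\right)'>0$ for all $d\in(0,\hat d)$, which proves~(i); the case $\mathcal T(A)<0$ yielding~(ii) is identical with the inequalities reversed. The closing sentence of the statement is then immediate, since $\mathcal T(A)$ depends only on the $\hat q$-th row $(\alpha_{\hat q k})_{k=1}^n$ of the dispersion matrix and may take either sign. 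I expect no real obstacle here: the only point requiring care is confirming that the $C^1$-branch from Lemma~\ref{l4.4} genuinely extends up to and including $d=0$ and that $\nu^d_{\hat q}$ stays bounded away from zero there, both of which are already contained in the cited lemma and in the proof of Proposition~\ref{Fina}; everything else is the one-line sign-and-continuity argument above.
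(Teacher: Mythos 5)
Your proposal is correct and matches the paper's (implicit) argument: the paper derives this proposition directly from Proposition~\ref{Fina} via exactly the sign-plus-continuity reasoning you give, relying on the $C^1$ dependence of $\theta^d_{\hat q}$ and $\nu^d_{\hat q}$ on $d$ near $d=0$ from Lemma~\ref{l4.4}. No gaps.
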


\subsection{Numerical simulations}
Now, we give some numerical simulations to illustrate our theoretical results for model \eqref{simple}.
Let $n=4$ and $(m_j)=(7.5, 7, 6.5, 6)$, and choose the
following two dispersal matrices:
\begin{equation*}
A_1=\left(\alpha_{jk}^{(1)}\right)=\left( {\begin{array}{*{20}{c}}
{-1}&{0.2}&{0.5}&{0.6}\\
{0.5}&{-1.2}&{0.2}&{0.1}\\
{0}&{0.1}&{-0.9}&{0.1}\\
{0}&{0.1}&{0.2}&{-1.2}
\end{array}} \right),
\end{equation*}
and
\begin{equation*}
A_2=\left(\alpha_{jk}^{(2)}\right)=\left( {\begin{array}{*{20}{c}}
{-2}&{0.2}&{0.5}&{0}\\
{0.5}&{-1.2}&{0.2}&{0.1}\\
{0}&{0.1}&{-0.9}&{0.1}\\
{0}&{0.1}&{0.2}&{-1.2}
\end{array}} \right).
\end{equation*}
Then corresponding network topologies with respect to $A_1$ and $A_2$ are different, see Fig. \ref{patchtu}.

\begin{figure}[t]
\begin{center}
\includegraphics[width=0.4\textwidth,trim=50 0 80 0,clip]{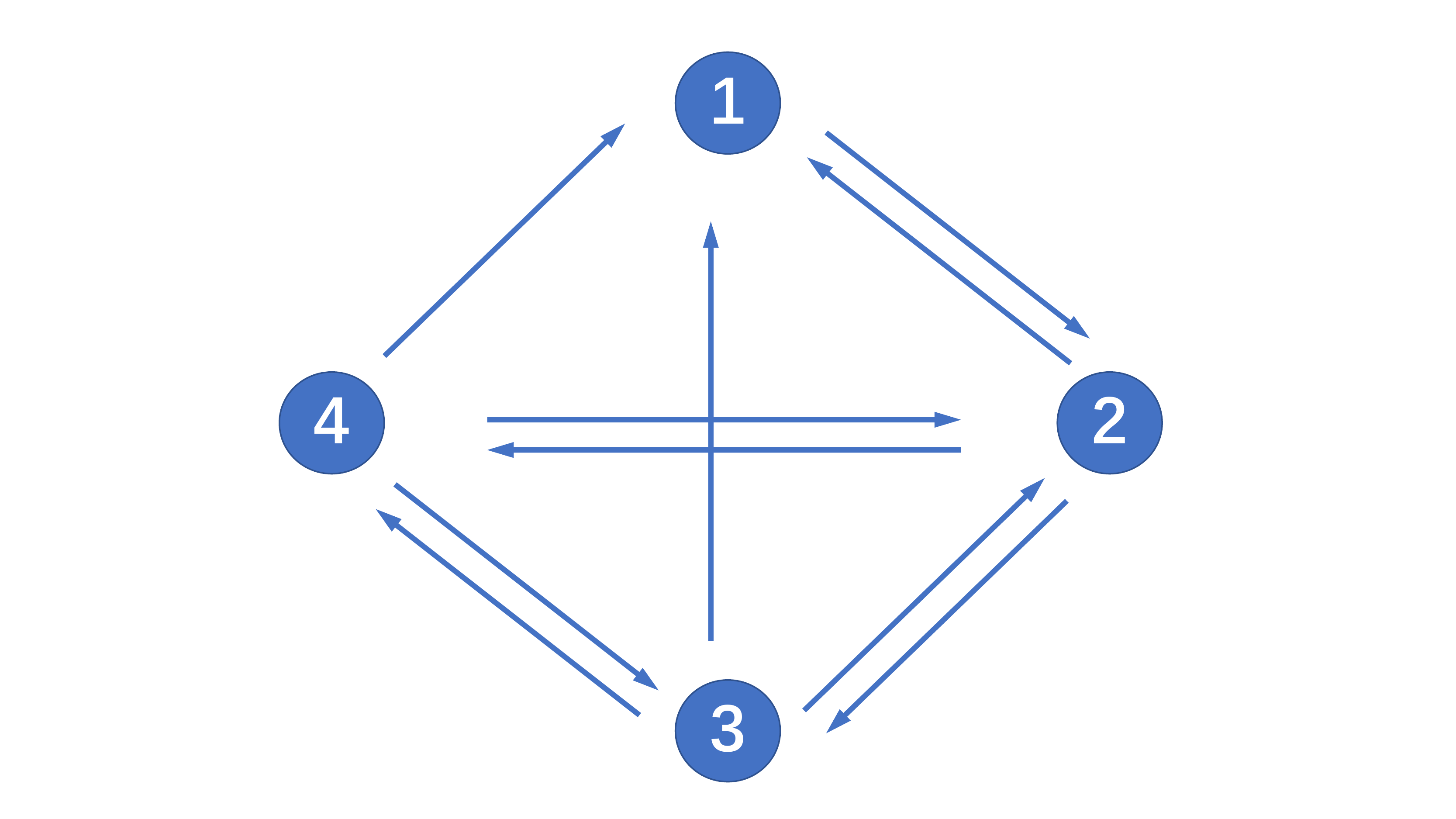}
\includegraphics[width=0.4\textwidth,trim=80 0 50 0,clip]{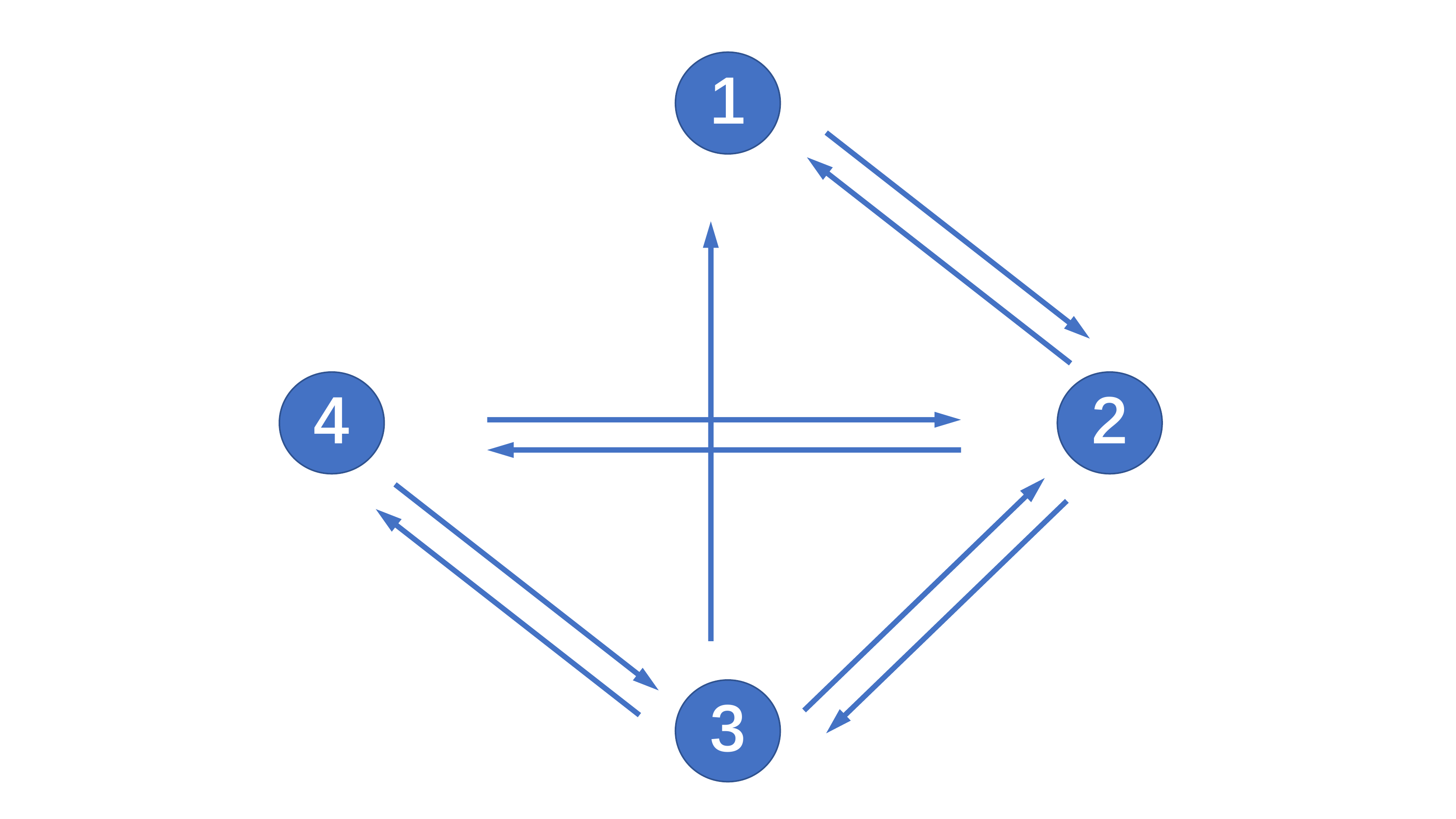}
\caption{Two different network topologies. (Left): $A=A_1$; (Right): $A=A_2$.}
\label{patchtu}
\end{center}
\end{figure}
We first choose $A=A_1$, and numerically show that delay $\tau$ can induce a Hopf bifurcation, and periodic solutions
can occur when  $0<d\ll1$ or $0<d_*-d\le 1$, see Fig. \ref{fig2}.
\begin{figure}[htbp]
\centering\includegraphics[width=0.49\textwidth,trim=100 245 100 210,clip]{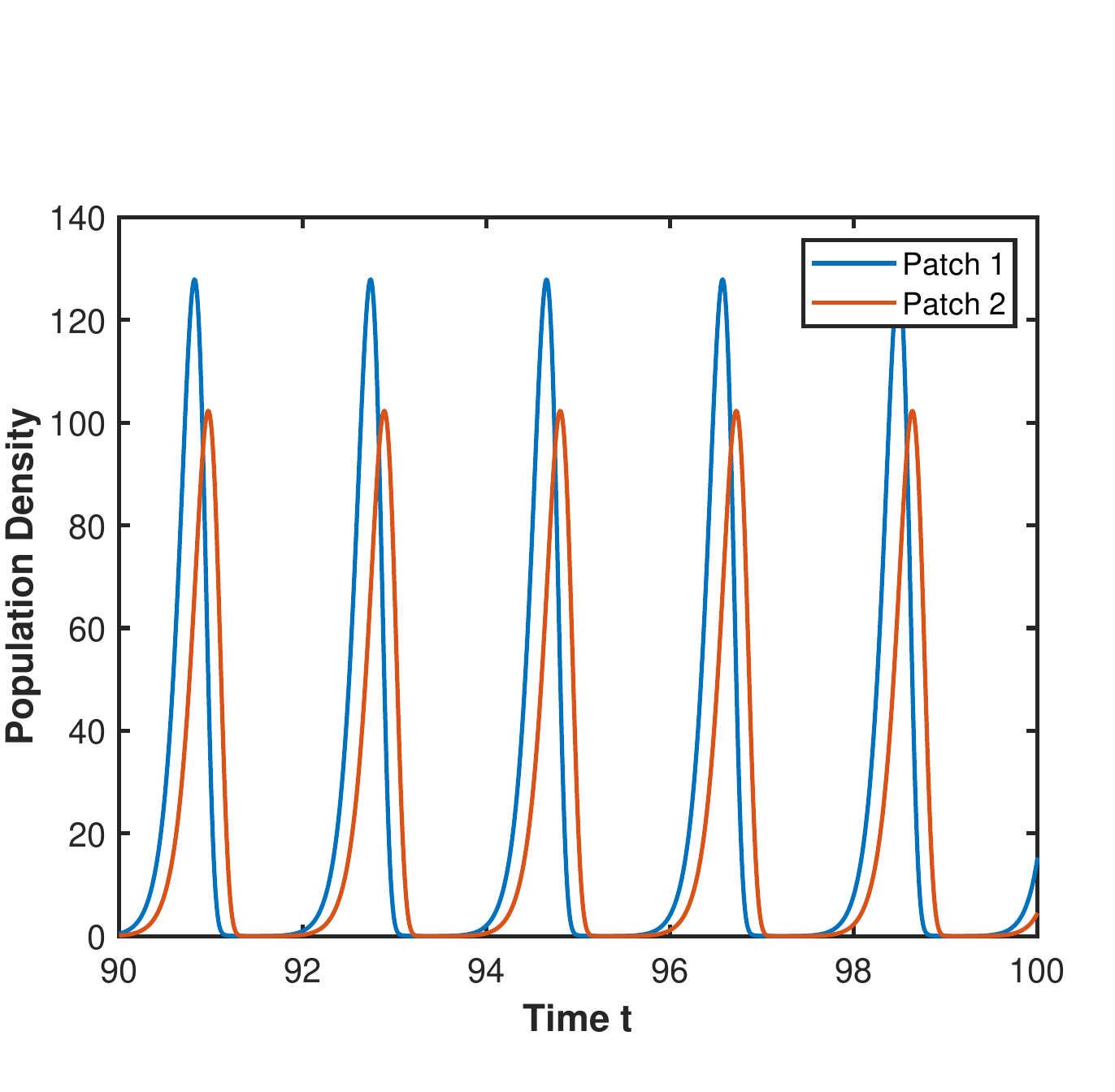}
\includegraphics[width=0.49\textwidth,trim=100 245 100 210,clip]{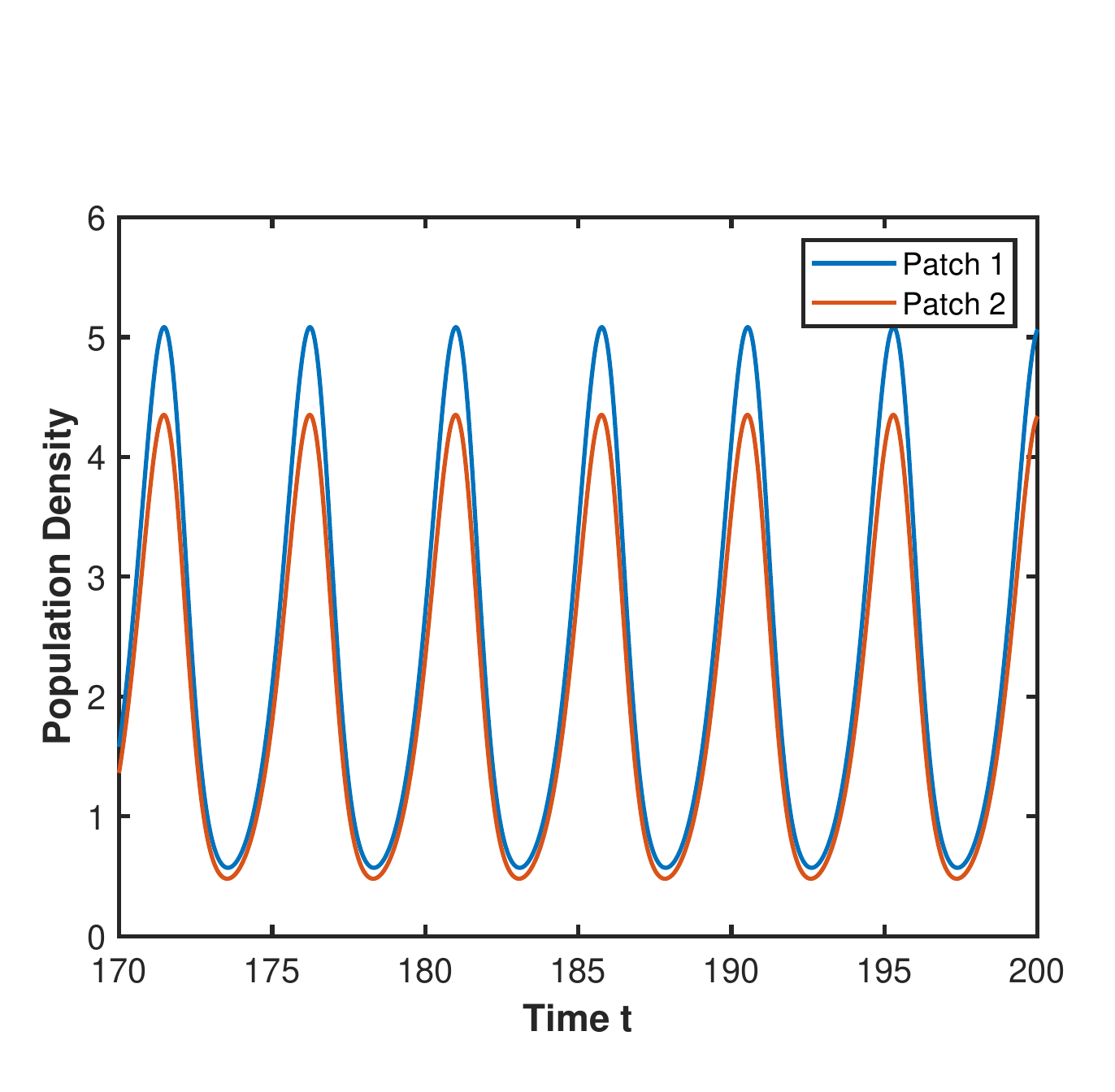}
\caption{Periodic solutions induced by a Hopf bifurcation for model \eqref{simple} with $A=A_1$. (Left) The small dispersal case: $d=0.3$ and $\tau=0.5$. (Right) The large dispersal case: $d = 10$ and $\tau = 1.2$.
  \label{fig2}}
\end{figure}
Then we discuss the effects of network topologies.
Clearly, $\hat q=1$ and $\mathcal T(A_1)>\mathcal T(A_2)$, where $\mathcal T(A)$ is defined in \eqref{matht}. This, combined with
Proposition \ref{Fina}, implies that $\tau^d_{1,0}(A_1)>\tau^d_{1,0}(A_2)$.
To confirm this, we  fix $\tau_1(=0.2144)$, and numerically show that the positive equilibrium of model \eqref{simple} is stable with $A=A_1$, while
model \eqref{simple} admits a positive periodic solution with $A=A_2$,
see Fig. \ref{smalldeltaiii}. Therefore, $\tau^d_{1,0}(A_1)>\tau^d_{1,0}(A_2)$.
\begin{figure}[!htb]
\begin{center}
\includegraphics[width=0.5\textwidth,trim=100 220 90 220,clip]{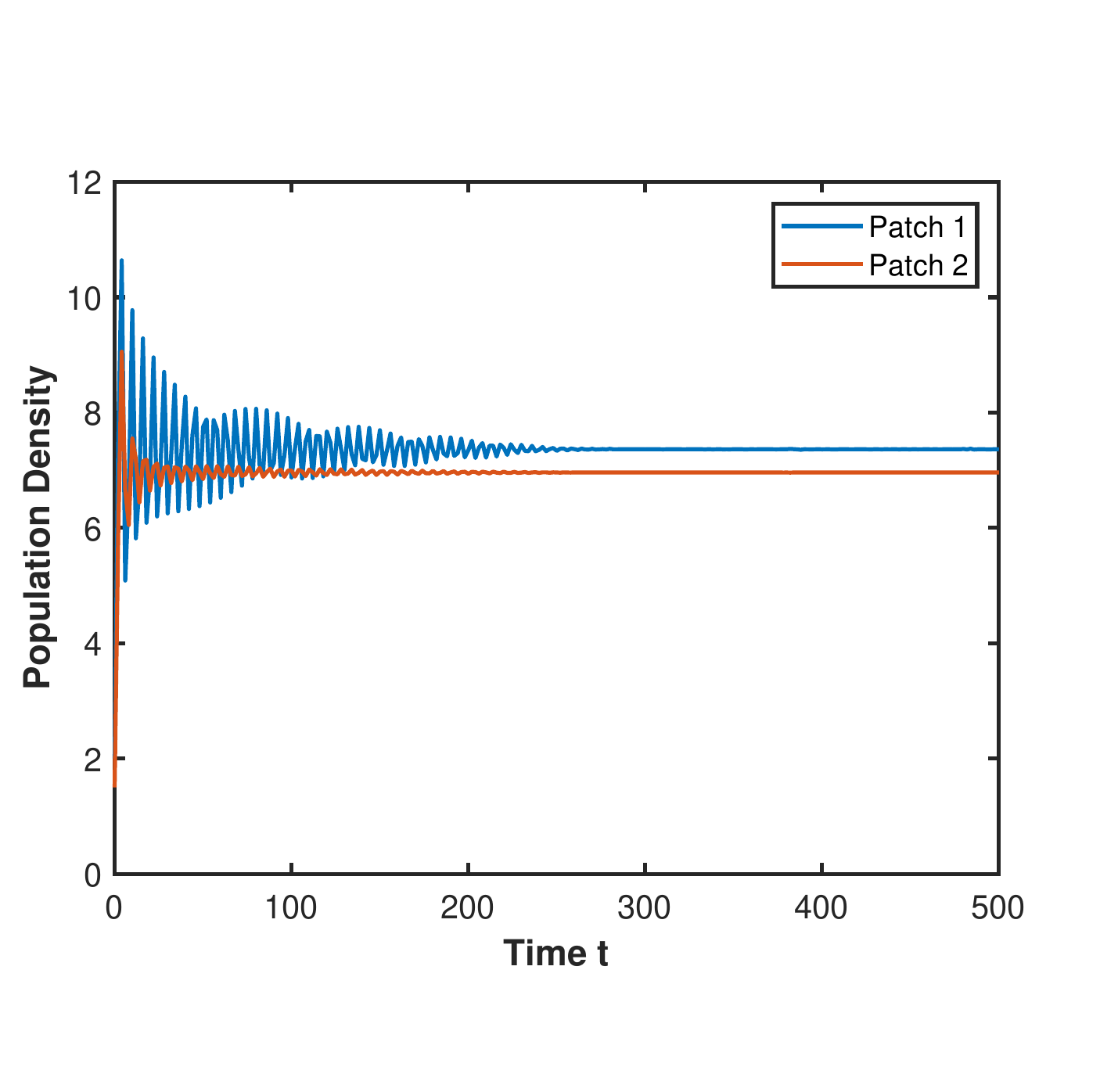}%
\includegraphics[width=0.5\textwidth,trim=90 220 100 220,clip]{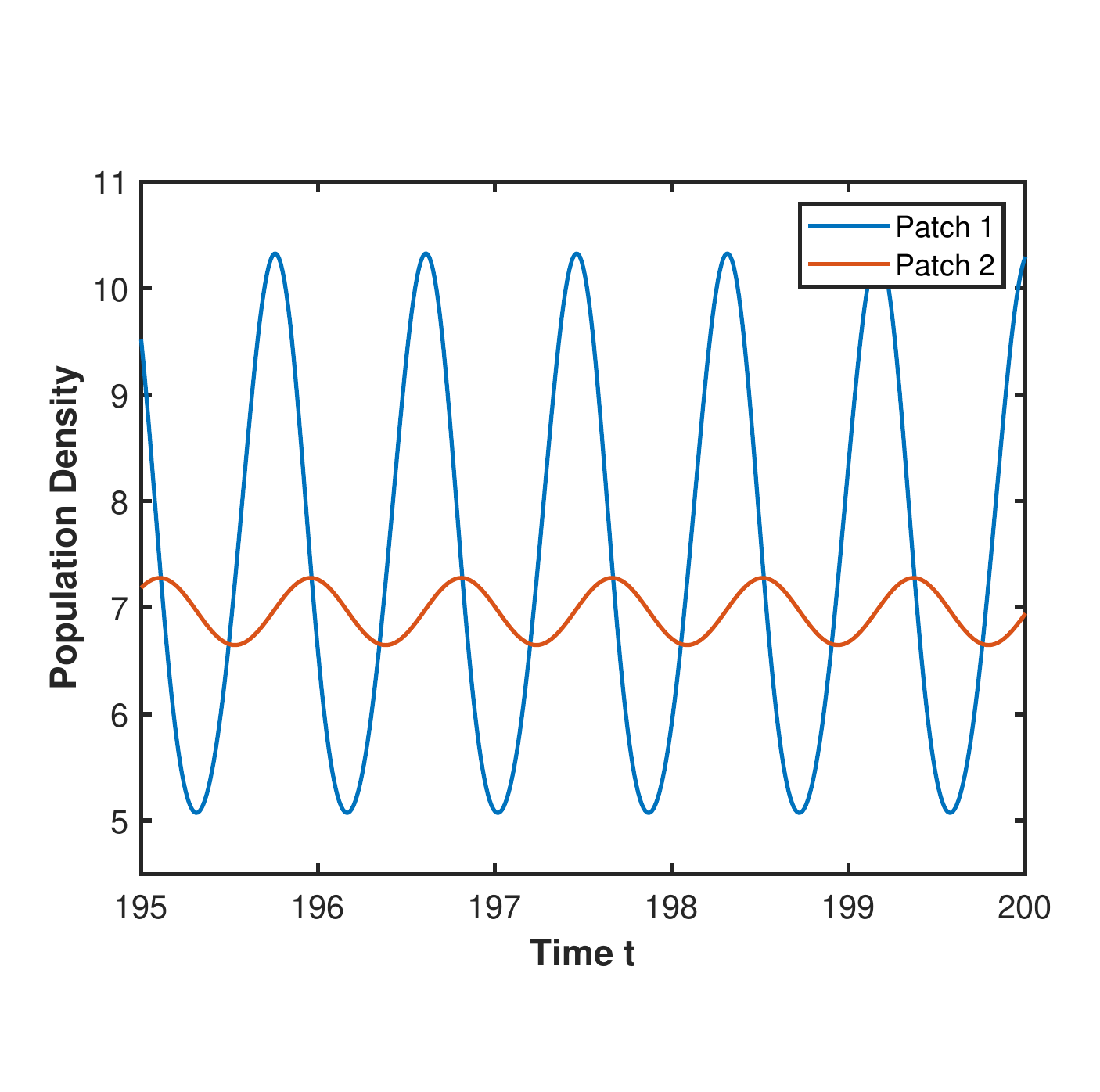}
\caption{The effect of network topologies. We only plot two patches for simplicity. Here $\tau_1=0.2144$. (Left): $A=A_1$; (Right): $A=A_2$.}
\label{smalldeltaiii}
\end{center}
\end{figure}


Moreover, an interesting question is whether Hopf bifurcation can occur when $d$ is intermediate. It is challenge if $n\ge3$.
For the two-patch model, one can compute the Hopf bifurcation value $\tau_{\hat q,0}^d$ for $d\in (0,d_*)$, see \cite{LiaoLou2014} with a symmetric dispersal matrix. Now we consider the asymmetric case. Let
$(m_1,m_2)=(1,2)$, and choose the
following two dispersal matrices:
\begin{equation*}
A_3=\left( {\begin{array}{*{20}{c}}
{-2}&{1}\\
{0.9}&{-1}
\end{array}} \right),\;\;A_4=\left( {\begin{array}{*{20}{c}}
{-20}&{1}\\
{15}&{-1}
\end{array}} \right).
\end{equation*}
For $A=A_i$ with $i=3,4$, we numerically obtain a Hopf bifurcation curve $\tau_{\hat q,0}^d(A_i)$, respectively. Here
 $\lim_{d\to 0}\tau_{\hat q,0}^d(A_i)={\pi}/{4}$ and $\lim_{d\to d_{*}^{(i)}}\tau_{\hat q,0}^d(A_i)=\infty$ with $d_{*}^{(i)}$ satisfies $s(d_{*}^{(i)}A_i+\operatorname{diag}(m_j))=0$ for $i=3,4$.
By Proposition \ref{Fina1}, we see that network topologies also affect the monotonicity of ${\tau^d_{\hat q,0}}$ for $0<d\ll1$.
As is showed in Fig. \ref {fig3}, ${\tau^d_{\hat q,0}}(A_3)$ is monotone increasing for $0<d\ll1$ with $\mathcal T(A_3)=1.3139>0$,  and ${\tau^d_{\hat q,0}}(A_4)$ is monotone decreasing for $0<d\ll1$ with $\mathcal T(A_4)= -2.7102<0$.
\begin{figure}[htbp]
\centering\includegraphics[width=0.496\textwidth,trim=100 245 100 240,clip]{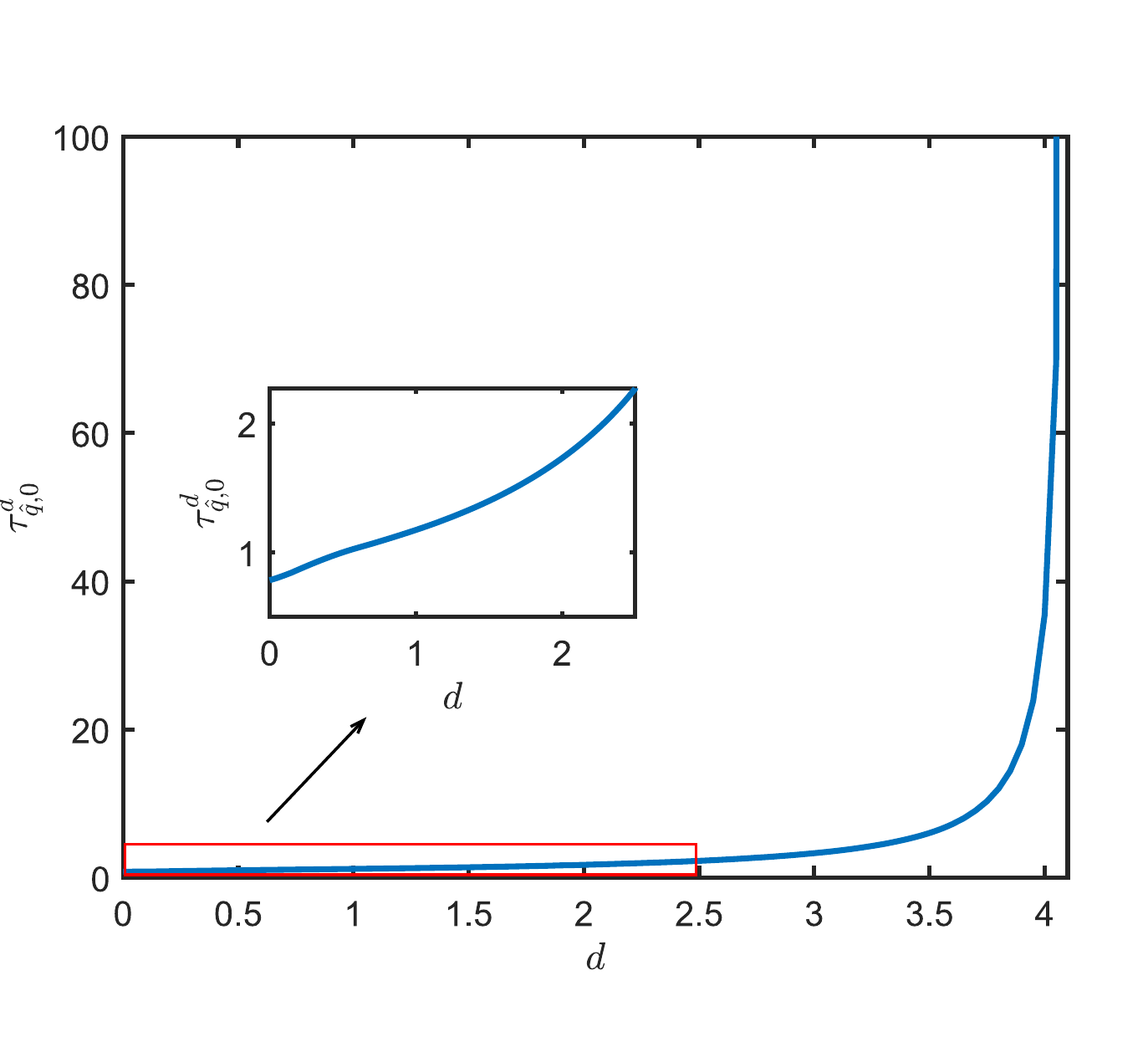}
\centering\includegraphics[width=0.496\textwidth,trim=100 245 100 240,clip]{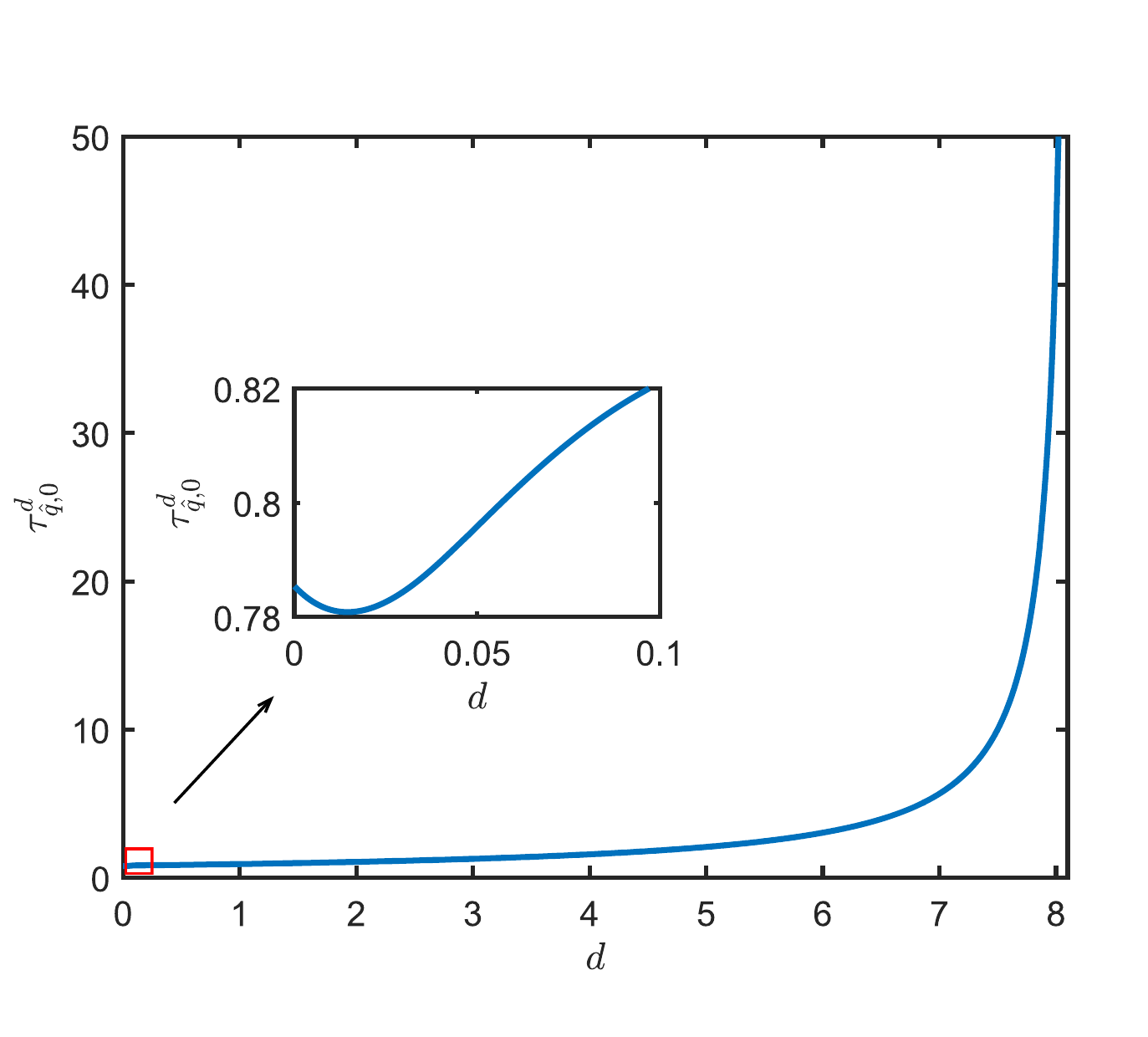}
\caption{The Hopf bifurcation curve. (Left): $A=A_3$; (Right): $A=A_4$.
  \label{fig3}}
\end{figure}
\section{Discussion}
Due to the limits of the method, we only show the existence of a Hopf bifurcation for two cases:
(I) $0<d_*-d\ll 1$, and (II) $0<d\ll 1$.

For case (I), $\tilde a-\tilde b$ is critical to determine the existence of a Hopf bifurcation. We remark that $\tilde a$ and $\tilde b$ are usually negative (see model \eqref{M12} for example), where $-\tilde a>0$ and $-\tilde b>0$ represent the instantaneous and delayed dependence of the growth rate, respectively.
Therefore, $\tilde a-\tilde b<0$ means that the instantaneous term  is dominant, and consequently, delay-induced Hopf bifurcations cannot occur; and $\tilde a-\tilde b>0$
 means that the delay term is dominant, and consequently, delay-induced Hopf bifurcations can occur.
 By \eqref{ula}, we conjecture that $\bm v(t)$ in \eqref{2linear} can be represented as follows:
\begin{equation}\label{near}
\bm v(t)= (d_*-d)\left[c(t)\bm \eta+(d_*-d){\bm z}(t)\right],\;\;\text{where}\;\;c(t)\in\mathbb R\;\; \text{and}\;\; \bm z(t)\in X_1.
\end{equation}
Substituting \eqref{ula} into \eqref{2linear}, we rewrite \eqref{2linear} as follows:
\begin{equation}\label{2linearc}
\begin{split}
\bm v'(t)=&\left[\ds\f{d}{d_*}\left(d_*A+{\rm diag}(m_j)\right)\right]\bm v(t)+(d_*-d){\rm diag}\left(\frac{m_j}{d_*}+q_j(d,\beta^d,\bm \xi^d)\right)\bm v(t)\\
&+(d_*-d){\rm diag}\left( a_j^d\beta^d\left(\eta_j+(d_*-d)\xi_j^d\right)\right)\bm v(t)\\
&+(d_*-d){\rm diag}\left( b_j^d\beta^d\left(\eta_j+(d_*-d)\xi_j^d\right)\right)\bm v(t-\tau),\\
\end{split}
\end{equation}
where $q_j(d,\beta,\bm\xi)$ and $a_j^{d}$, $b_j^{d}$ are defined in \eqref{qi} and \eqref{abd}, respectively.
Note that $a_j^d=a_j$ and $b_j^d=b_j$ for $d=d_*$, where $a_j$ and $b_j$ are defined in \eqref{r1r2}.
Then Plugging \eqref{near} into \eqref{2linearc} and removing higher order terms $\mathcal{O}(d_*-d)^2$, we see that $c(t)$ satisfies
\begin{equation}\label{cteta}
\begin{split}
c'(t)\eta_j=&\left(\ds\frac{m_j}{d_*}+q_j(d_*,\beta^{d_*},\bm \xi^{d_*})\right)\eta_jc(t)+\beta^{d_*} a_j\eta^2_jc(t)+\beta^{d_*} b_j\eta^2_jc(t-\tau),\;\; j=1,2,\cdots,
\end{split}
\end{equation}
where
$q_j(d_*,\beta^{d_*},\bm \xi^{d_*})=\beta^{d_*}(a_j+b_j)\eta_j$
by \eqref{qi}.
Multiplying \eqref{cteta} by $\va_j$ and summing these over all $j$, we see that
\begin{equation}\label{cteta2}
\begin{split}
c'(t)\sum_{j=1}^n\va_j\eta_j=&\left[\ds\frac{1}{d_*}\sum_{j=1}^nm_j\va_j\eta_j+\beta^{d_*}(\tilde a+\tilde b)\right]c(t)+\beta^{d_*} \tilde ac(t)+\beta^{d_*} \tilde b c(t-\tau)\\
=&\beta^{d_*} \tilde ac(t)+\beta^{d_*} \tilde b c(t-\tau),
\end{split}
\end{equation}
where we have used \eqref{beta*} in the first step. Therefore, removing higher order terms $\mathcal{O}(d_*-d)^2$, the linearized system \eqref{2linear} can be approximated by \eqref{cteta2}. This also explains why $\tilde a-\tilde b$ is crucial for the existence of a Hopf bifurcation.

For case (II), we also show the existence of a Hopf bifurcation, and discuss the effect of network topology on Hopf bifurcation values for a concrete model.
Our method can only apply to the case of spatial heterogeneity, since it is based on the fact that $\mathcal S_q$ is one dimensional (see Lemma \ref{l4.4}). For example, we need to impose assumption \eqref{mjmk} on model \eqref{simple} to guarantee the existence of a Hopf bifurcation. The
case of spatial homogeneity awaits further investigation.

\acknowledgements
We thank two anonymous reviewers for their insightful suggestions which
greatly improve the manuscript. We also thank Dr. Zuolin Shen for helpful suggestions on
numerical simulations.

This work is supported by the National Natural Science Foundation of China (Nos. 12171117, 11771109) and Shandong Provincial Natural Science Foundation of China (No. ZR2020YQ01).

\section*{Conflict of interest}
None.



\label{lastpage}

\end{document}